\numberwithin{equation}{section}
\setlist[itemize]{noitemsep, topsep=0pt}
\newcommand{\R}[1]{\mathbf{R}^{#1}}
\newcommand{\N}{\mathbf{N}}
\newcommand{\Mink}{\mathbf{M}^{3}}
\newcommand{\Hyp}{\mathcal{H}}
\newcommand{\dsum}[3]{\displaystyle \sum_{#1}^{#2}#3}
\newcommand{\fourbyfour}[9]{\left( \begin{array}{cccc}
#1 & #2 & \cdots & #3 \\
#4 & #5 & \cdots & #6\\
\vdots & \vdots & \cdots  & \vdots \\
#7 & #8 & \cdots & #9 \end{array} \right)}
\newcommand{\ip}[2]{\left<  #1, #2 \right>}
\newcommand{\diag}{\text{diag}}
\numberwithin{equation}{section}
\theoremstyle{plain}
\newtheorem{theorem}{Theorem}[section]
\newtheorem{prop}[theorem]{Proposition}
\newtheorem{lemma}[theorem]{Lemma}  
\newtheorem{cor}[theorem]{Corollary}
\theoremstyle{definition}
\newtheorem{defn}[theorem]{Definition}
\newtheorem{example}[theorem]{Example}
\theoremstyle{remark}
\newtheorem{remark}[theorem]{Remark}
\title[Confocal Curves]{Pseudo-Euclidean Billiards within Confocal Curves on the Hyperboloid of One Sheet}
\author[S. Gasiorek]{Sean Gasiorek*}
\thanks{* Corresponding author.}
\email{sean.gasiorek@sydney.edu.au}
\author[M. Radnovi\'c]{Milena Radnovi\'c}
\email{milena.radnovic@sydney.edu.au}
\address{School of Mathematics and Statistics, Carslaw Building F07, University of Sydney, NSW 2006, Australia} 
\keywords{Billiards, Minkowski space, elliptic billiards, confocal quadrics, periodic trajectories, geodesics, hyperboloid}
\begin{document}

\begin{abstract}
We consider a billiard problem for compact domains bound\-ed by confocal conics on a hyperboloid of one sheet in the Minkowski space. 
We show that there are two types of confocal families in such setting.
Using an algebro-geometric integration technique, we prove that the billiard within generalized ellipses of each type is integrable in the sense of Liouville. 
Further, we prove a generalization of the Poncelet theorem and derive Cayley-type conditions for periodic trajectories and explore geometric consequences. \\
\textbf{MSC2020:} Primary: 70H06, 70H12, 37J35, 37J46; Secondary: 14H70, 37J38, 37J39.
\end{abstract}

\maketitle

\tableofcontents



\section{Introduction}\label{intro}

Mathematical billiard is a dynamical system where a particle moves freely within a domain and obeys the billiard law, where the angle of incidence equals angle of reflection, off the boundary \cite{Bir,KozlovTr}.
The behavior of such a mechanical system is dependent upon the geometric properties of the boundary and of the underlying space. 

It is well known that classical theorems of Jacobi, Chasles, Poncelet, and Cayley 
and their generalizations imply integrability and many beautiful geometric properties of the billiards within confocal quadrics in Euclidean and pseudo-Euclidean spaces of arbitrary finite dimension, see e.g. \cite{GKT,KT,DR3,DR4}.



The main inspiration for our work comes from the paper \cite{V}, where billiards within confocal families on a hyperboloid of two sheets were considered. There, the restriction of the Minkowski metric gives rise to the geometry of Lobachevsky.
In this paper, we will study confocal families and the corresponding billiards on a hyperboloid of one sheet.
The restriction of the metric will be Lorentzian in our case, and we get novel and intriguing geometric and dynamical properties of generalised elliptical billiards.

In Section \ref{HyperboloidIntro} we introduce the three-dimensional Minkowski space and its properties. We also define and explore confocal families of conics as intersections of confocal families of cones with the hyperboloid of one sheet and analyse their properties.
Interestingly enough, this leads to two different types of confocal families, and we give geometric description for each of them. 
Section \ref{BonH} introduces billiards in Minkowski space and discusses basic properties of geodesics on the hyperboloid of one sheet. In Section \ref{FMMP} we provide a review of the relevant matrix factorization technique, adapted from \cite{V}, which is applied to these two scenarios and proves the explicit integrability of the two billiard systems. 
It is interesting to note that the Lax pair will produce a discrete trajectory for any initial conditions, even in the cases when the consecutive points cannot be connected by geodesics.
Section \ref{SpectralCurves} addresses the Cayley condition for periodic orbits and addresses a Poncelet-like theorem. Lastly, section \ref{GC} addresses the unique geometric properties of each billiard system and provides examples.

\section{Confocal Families on the Hyperboloid of One Sheet}
\label{HyperboloidIntro}

The \emph{three-dimensional Minkowski space} $\Mink$ is the real 3-dimensional vector space $\R{3}$ with the symmetric nondegenerate bilinear form
\begin{equation}
\ip{x}{y} = - x_0 y_0 + x_1 y_1  +  x_2 y_2.
\end{equation}

\begin{defn}\label{SLT}
For a vector $v$, we say that it is:
	\begin{itemize}
		\item \emph{space-like} if $\ip{v}{v}>0$ or $v=0$;
		\item \emph{light-like} if $\ip{v}{v}=0$ and $v \neq 0$;
		\item \emph{time-like} if $\ip{v}{v}<0$.
	\end{itemize}
	Two vectors $u$ and $v$ in Minkowski space are \emph{orthogonal} if $\ip{u}{v}=0$. Note that any light-like vector is orthogonal to itself. 
A line $\ell$ will be called \emph{space-like}, \emph{light-like}, or \emph{time-like} if such is its direction vector.
\end{defn}

We take interest in the hyperboloid of one sheet
\begin{equation}
\Hyp\ :\ \ip{x}{x} =1
\end{equation}
in $\Mink$. The metric $$ds^2 = -dx_0^2 + dx_1^2 + dx_2^2$$ restricted to $\Hyp$ is a Lorentz metric of constant curvature. Geodesics of this metric are the intersections of $\Hyp$ and the planes through the origin, also called \emph{central planes}.
Such intersections can take the form of plane ellipses, hyperbolas, or straight lines. We call these geodesics \emph{space-}, \emph{time-}, and \emph{light-}, respectively, as the tangent vectors to these geodesics obey the inequalities stated in Definition \ref{SLT}. 

The hyperboloid of one sheet is not geodesically connected even though it is a geodesically complete Lorentzian manifold \cite{ON,Bee}. However, we can state specifically when and how two points of $\Hyp$ can be connected by a geodesic, provided the two points are not \emph{antipodal}, i.e. distinct points on $\Hyp$ that are on the same line through the origin.

\begin{prop}[\cite{ON}]
Let $p$ and $q$ be distinct nonantipodal points on the hyperboloid of one sheet, $\Hyp$, and let $\ip{\cdot}{\cdot}$ be the Minkowski inner product. 
\begin{enumerate}[(i)]
    \item If $\ip{p}{q}>1$, then $p$ and $q$ lie on a unique time-like geodesic that is one-to-one;
    \item If $\ip{p}{q}=1$, then $p$ and $q$ lie on a unique light-like geodesic;
    \item If $-1 < \ip{p}{q}<1$, then $p$ and $q$ lie on a unique space-like geodesic that is periodic;
    \item If $\ip{p}{q} \leq -1$, then $p$ and $q$ cannot be connected by a geodesic. 
\end{enumerate}
\label{GeodesicProp}
\end{prop}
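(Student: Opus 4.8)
The plan is to reduce everything to the central plane $\Pi$ spanned by $p$ and $q$ and to read off all four cases from the signature of the restricted Minkowski form on $\Pi$. Since $p\neq q$ are nonantipodal, they are linearly independent, so $\Pi=\operatorname{span}\{p,q\}$ is a well-defined central plane. Any geodesic through both points has the form $\Hyp\cap\Pi'$ for some central plane $\Pi'$, and $\Pi'$ must contain $p$ and $q$, forcing $\Pi'=\Pi$; this already gives uniqueness of the candidate geodesic in every case. It then remains to decide, in terms of $c:=\ip{p}{q}$, whether the conic $\Hyp\cap\Pi$ actually joins $p$ to $q$ and what its causal character is.

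The Gram matrix of $\ip{\cdot}{\cdot}|_{\Pi}$ in the basis $\{p,q\}$ is $\twobytwo{1}{c}{c}{1}$, with trace $2>0$ and determinant $1-c^2$, so the sign of the determinant fixes the signature: positive definite for $-1<c<1$, Lorentzian (signature $(-,+)$) for $|c|>1$, and degenerate (signature $(0,+)$) for $c=\pm1$. Correspondingly $\Hyp\cap\Pi$ is an ellipse, a two-branched hyperbola, or a pair of parallel straight lines, and its tangent directions are space-, time-, or light-like, matching cases (iii), (i)/(iv), and (ii)/(iv). The ellipse is connected, so in the positive-definite range $p$ and $q$ are always joined by a single closed (hence periodic) space-like geodesic, settling (iii).

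The subtlety, and the only real work, is that the indefinite and degenerate planes each yield a disconnected conic, so the sign of $c$ rather than merely $|c|$ must decide whether $p$ and $q$ share a component. I would settle the Lorentzian case by choosing an orthonormal basis $\{e,f\}$ of $\Pi$ with $\ip{e}{e}=-1$ and $\ip{f}{f}=1$: writing $p=\sinh t_1\,e+\cosh t_1\,f$ and placing $q$ on either branch, a direct computation gives $\ip{p}{q}=\cosh(t_1-t_2)\ge 1$ on the same branch and $-\cosh(t_1-t_2)\le -1$ on the opposite one. Thus $c>1$ holds exactly when the points lie on one branch, which is an injective (one-to-one) time-like geodesic joining them, giving (i); while $c<-1$ separates them onto different branches met by no geodesic in $\Pi$, contributing to (iv). The analogous computation in the null plane, using the radical direction $n$ with $\ip{n}{n}=0$ and a unit space-like $w$ orthogonal to it, shows the two parallel light-like lines are $x=\alpha n\pm w$ with $\ip{x}{x'}=\pm1$ according to whether $x,x'$ share a line; hence $c=1$ yields a single connecting light-like geodesic (case (ii)), whereas $c=-1$ puts $p,q$ on distinct parallel lines that cannot be joined, completing (iv). Collecting the ranges recovers the stated dichotomy.
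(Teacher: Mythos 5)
The paper does not prove this proposition at all --- it is imported verbatim from O'Neill \cite{ON} --- so there is nothing internal to compare against; what you have written is a correct, self-contained proof along the standard lines of the cited source. Your reduction is sound: $p,q$ distinct and nonantipodal are linearly independent, any geodesic through both is a component of $\Hyp\cap\Pi'$ for a central plane $\Pi'\supseteq\operatorname{span}\{p,q\}$, hence $\Pi'=\Pi$, and the Gram matrix $\twobytwo{1}{c}{c}{1}$ (trace $2$, determinant $1-c^2$) correctly sorts the three signatures; the component analysis via $\ip{p}{q}=\cosh(t_1-t_2)$ versus $-\cosh(\cdot)$ on the hyperbola, and $\pm1$ on the pair of null lines, is exactly the point that distinguishes (i) from the $c<-1$ part of (iv) and (ii) from the $c=-1$ part. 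Two cosmetic remarks: with the parametrization $q=\sinh t_2\,e-\cosh t_2\,f$ on the opposite branch one gets $-\cosh(t_1+t_2)$ rather than $-\cosh(t_1-t_2)$ (harmless, since only the bound $\le -1$ is used), and strict inequality in (i) should be noted to follow from $p\neq q$, since $\cosh(t_1-t_2)=1$ forces $t_1=t_2$. Neither affects the validity of the argument.
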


\begin{remark}
Suppose $p$, $q$ are antipodal points on $\Hyp$, and write $q=-p$. Then there is a family of planes containing the line through $p$ and $-p$, and as such there is no unique geodesic connecting antipodal points. In fact, infinitely many space-like geodesics connect a point and its antipode. 
\label{Antipodal}
\end{remark}

Consider a cone in $\Mink$
\begin{equation}\label{eq:coneA}
\ip{Ax}{x} = 0
\end{equation}
and its dual cone 
\begin{equation}\label{eq:coneAInv}
\ip{A^{-1}x}{x} = 0
\end{equation}
for a matrix $A$ satisfying $\ip{Ax}{y} = \ip{x}{Ay}$.
In general, the matrix $A$ is not diagonalizable over $\R{}$. 
However, when the curves of intersection of the hyperboloid $\Hyp$ and the cone (\ref{eq:coneAInv}) bound a compact domain on $\Hyp$ then $A^{-1}$ (and therefore $A$) is diagonalizable, which we prove in the following proposition.

\begin{prop}\label{prop:Adiag}
Suppose that all points of the cone (\ref{eq:coneAInv}), apart from its vertex, satisfy the inequality $\ip{x}{x}>0$.
Then $A^{-1}$ is diagonalizable in some orthogonal coordinate system. 
\label{Diagonalizable}
\end{prop}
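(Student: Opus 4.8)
The plan is to reformulate the statement as a simultaneous-diagonalization problem and then exploit the cone hypothesis to control the eigenstructure. Set $B := A^{-1}$. From $\ip{Ax}{y}=\ip{x}{Ay}$ one checks that $B$ is again self-adjoint for the Minkowski form, $\ip{Bx}{y}=\ip{x}{By}$, so the quadratic form $Q(x):=\ip{Bx}{x}$ is symmetric and the cone (\ref{eq:coneAInv}) is exactly $\{Q=0\}$. Producing an orthogonal coordinate system in which $B$ is diagonal is the same as simultaneously diagonalizing the pair of forms $\ip{x}{x}$ and $Q$, i.e. finding a Minkowski-orthogonal basis of eigenvectors of $B$; in these terms the hypothesis reads $Q(x)=0,\ x\neq 0 \Rightarrow \ip{x}{x}>0$, meaning the $Q$-cone meets the closed solid light cone $\{\ip{x}{x}\le 0\}$ only at the origin.

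First I would record the one fact that drives everything: \emph{$B$ has no light-like eigenvector}. Indeed, if $Bv=\lambda v$ with $v\neq 0$ light-like and $\lambda$ real, then $Q(v)=\ip{Bv}{v}=\lambda\ip{v}{v}=0$, so $v$ is a non-vertex point of the cone (\ref{eq:coneAInv}) with $\ip{v}{v}=0$, contradicting the hypothesis. Since $B$ is a real $3\times 3$ matrix it has a real eigenvalue with eigenvector $v_1$, which is therefore non-null, and its Minkowski-orthogonal complement $W=v_1^{\perp}$ is a $B$-invariant plane on which the form is nondegenerate. If $v_1$ is time-like, $W$ is positive definite, $B|_W$ is self-adjoint for a Euclidean inner product, hence orthogonally diagonalizable, and adjoining $v_1$ finishes the proof.

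The only real work is the case $v_1$ space-like, where $W$ is Lorentzian and $B_2:=B|_W$ could a priori fail to be diagonalizable, either through a Jordan block or through complex eigenvalues. The Jordan-block possibility is dispatched quickly: a rank-one nilpotent self-adjoint operator has null image, so the repeated-eigenvalue non-diagonalizable case would again produce a light-like eigenvector, which is excluded. The genuine obstacle is to rule out \emph{complex} eigenvalues of $B_2$, and this is where the cone hypothesis must be used quantitatively rather than qualitatively.

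To handle it I would pass to Minkowski-orthonormal coordinates adapted to the splitting $\R{3}=\mathbb{R}v_1\oplus W$, say $\ip{x}{x}=s^2+x_1^2-x_2^2$ with $W=\{s=0\}$, and write $Q|_W=p\,x_1^2+2q\,x_1x_2+r\,x_2^2$. A direct computation of the characteristic polynomial of $B_2$ gives discriminant $(p+r)^2-4q^2$, so $B_2$ has complex eigenvalues exactly when $(p+r)^2<4q^2$. On the other hand, slicing the hypothesis by $\{s=0\}$ forces both null lines of $Q|_W$ to lie in the space-like sector $\{x_1^2>x_2^2\}$, i.e. both roots of $r m^2+2q m+p$ to lie in $(-1,1)$; locating these roots shows this requires $(p+r)^2>4q^2$ (apart from the easy degenerate sub-case $r=0$, handled directly, where one null line is already time-like). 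These two inequalities are incompatible, so $B_2$ cannot have complex eigenvalues. Hence $B_2$, and with it $B$ and $A$, is diagonalizable, and collecting the mutually Minkowski-orthogonal eigenvectors (distinct eigenvalues give orthogonal eigenvectors, each necessarily non-null) yields the desired orthogonal coordinate system. I expect the complex-eigenvalue exclusion to be the crux; the remaining steps are standard linear algebra over an indefinite form.
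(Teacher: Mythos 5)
Your argument is correct, and it takes a genuinely different route from the paper's. The paper (adapting Veselov's Proposition 1) produces the first eigenvector \emph{variationally}: the homogeneous ratio $f(x)=\ip{A^{-1}x}{x}/\ip{x}{x}$ is extremized over the compact domain that the cone (\ref{eq:coneAInv}) cuts out on $\Hyp$, the critical-point equation gives $A^{-1}x_0=\lambda_0 x_0$, and one then passes to the orthogonal complement of $x_0$ and invokes the spectral theorem, asserting that $\ip{x}{x}$ restricted there is positive definite. You instead get a real eigenvector for free from odd dimension, feed the cone hypothesis through the single clean lemma that $B=A^{-1}$ has no light-like eigenvector, and concentrate all the work on the two-dimensional complement $W$: the Jordan block is excluded because a self-adjoint nilpotent has isotropic image (hence would produce a light-like eigenvector), and complex eigenvalues are excluded by your discriminant computation. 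That computation checks out: with $\ip{\cdot}{\cdot}|_W=x_1^2-x_2^2$ and $Q|_W=px_1^2+2qx_1x_2+rx_2^2$ one gets $B_2=\twobytwo{p}{q}{-q}{-r}$ with characteristic discriminant $(p+r)^2-4q^2$; confining both roots of $rm^2+2qm+p$ to $(-1,1)$ forces $r(p+r+2q)>0$ and $r(p+r-2q)>0$, hence $r^2\left((p+r)^2-4q^2\right)>0$, while in the definite case $(p+r)^2\ge 4pr>4q^2$ holds automatically. What your route buys is precisely the point where the paper's adaptation is most delicate: on the hyperboloid of \emph{one} sheet the extremal eigenvector $x_0$ lies on $\Hyp$ and is therefore space-like, so the restriction of $\ip{x}{x}$ to $x_0^{\perp}$ is Lorentzian rather than positive definite, and a self-adjoint operator on a Lorentzian plane need not be diagonalizable --- your analysis of $W$ is exactly the argument needed to close that step. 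What the paper's variational mechanism buys in exchange is a geometric, essentially dimension-free way of producing eigenvectors tied directly to the compact domain on $\Hyp$, whereas your discriminant argument is tailored to the $3\times 3$ case.
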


\begin{proof}
The proof is similar to the proof of Proposition 1 in \cite{V}. 
Consider the function $f(x) = \ip{A^{-1}x}{x}/\ip{x}{x}$. 
The function $f$ is well-defined on $\Hyp$ and vanishes on $\mathcal{C}$, the curves of intersection of the cone $\ip{A^{-1}x}{x}=0$ and $\Hyp$. 
The cone $\ip{x}{x}=0$ is the asymptotic cone to $\Hyp$, so any cone $\ip{A^{-1}x}{x}=0$ whose points satisfy $\ip{x}{x}>0$ must bound one or two compact domains on $\Hyp$.
As such, $f$ must have a maximum or minimum at some point $x_0$ of the domain bounded by $\mathcal{C}$. At this point we have $f^\prime(x_0) =0$ or $$A^{-1}x_0 = \lambda_0 x_0, \qquad \lambda_0 = \ip{A^{-1}x_0}{x_0}/\ip{x_0}{x_0},$$ so that $x_0$ is an eigenvector of $A^{-1}$. In the orthogonal complement of $x_0$, $W = \{ x \in \Mink \; : \; \ip{x}{x_0}=0\}$, we have two quadratic forms, namely the restrictions of $\ip{A^{-1}x}{x}$ and $\ip{x}{x}$. The second is positive definite and the result follows from the spectral theorems.
\end{proof}

In light of Proposition \ref{prop:Adiag}, the curves of intersection of the cone $\ip{A^{-1}x}{x}=0$ and $\Hyp$ bound either one compact domain and two unbounded domains, or two compact domains and one unbounded domain. We can describe when each of these cases occur in terms of the the entries of $A = \diag(a_0,a_1,a_2)$. 

\begin{defn}\label{ElHyp}
If the cone $\ip{A^{-1}x}{x}=0$ divides $\Hyp$ into one compact domain and two unbounded domains, we call the boundary curves a \emph{collared $\Hyp$-ellipse}. In some orthogonal coordinate system in $\Mink$ the collared $\Hyp$-ellipse is determined by the equation 
\begin{equation}
-\frac{x_0^2}{a_0}+ \frac{x_1^2}{a_1}  + \frac{x_2^2}{a_2}=0
\label{Eqn2}
\end{equation}
with $0 < a_0 < a_1 < a_2$. If the cone $\ip{A^{-1}x}{x}=0$ divides $\Hyp$ into two compact domains and one unbounded domain, we call the boundary curves a \emph{transverse $\Hyp$-ellipse}. In some orthogonal coordinate system in $\Mink$ the transverse $\Hyp$-ellipse is determined by equation \ref{Eqn2} with $a_1<0 < a_0 < a_2$.
\end{defn}


\begin{figure}[tbhp]
\begin{tabular}{c c}
a) \includegraphics[width=0.43\textwidth]{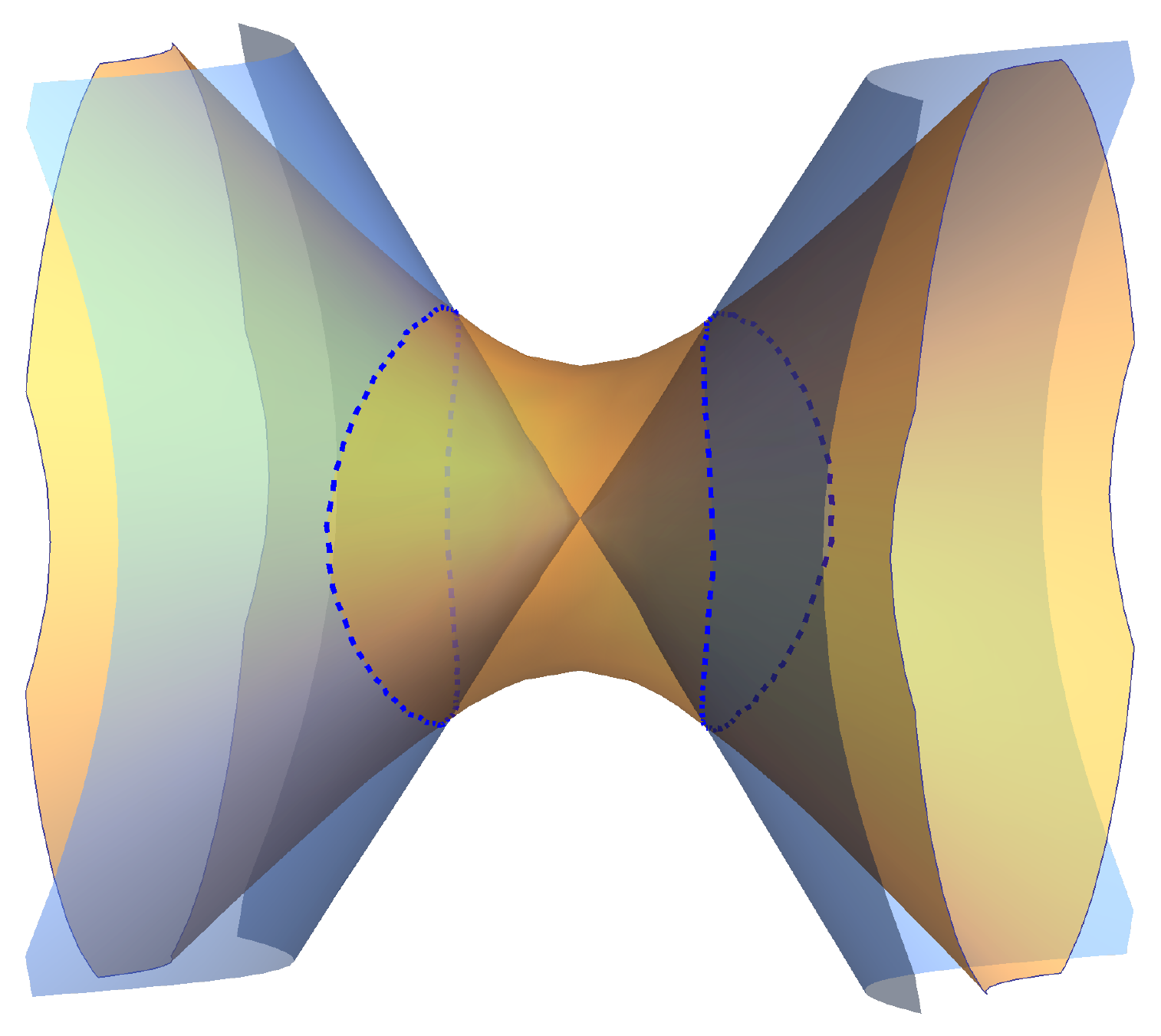} & b) \includegraphics[width=0.43\textwidth]{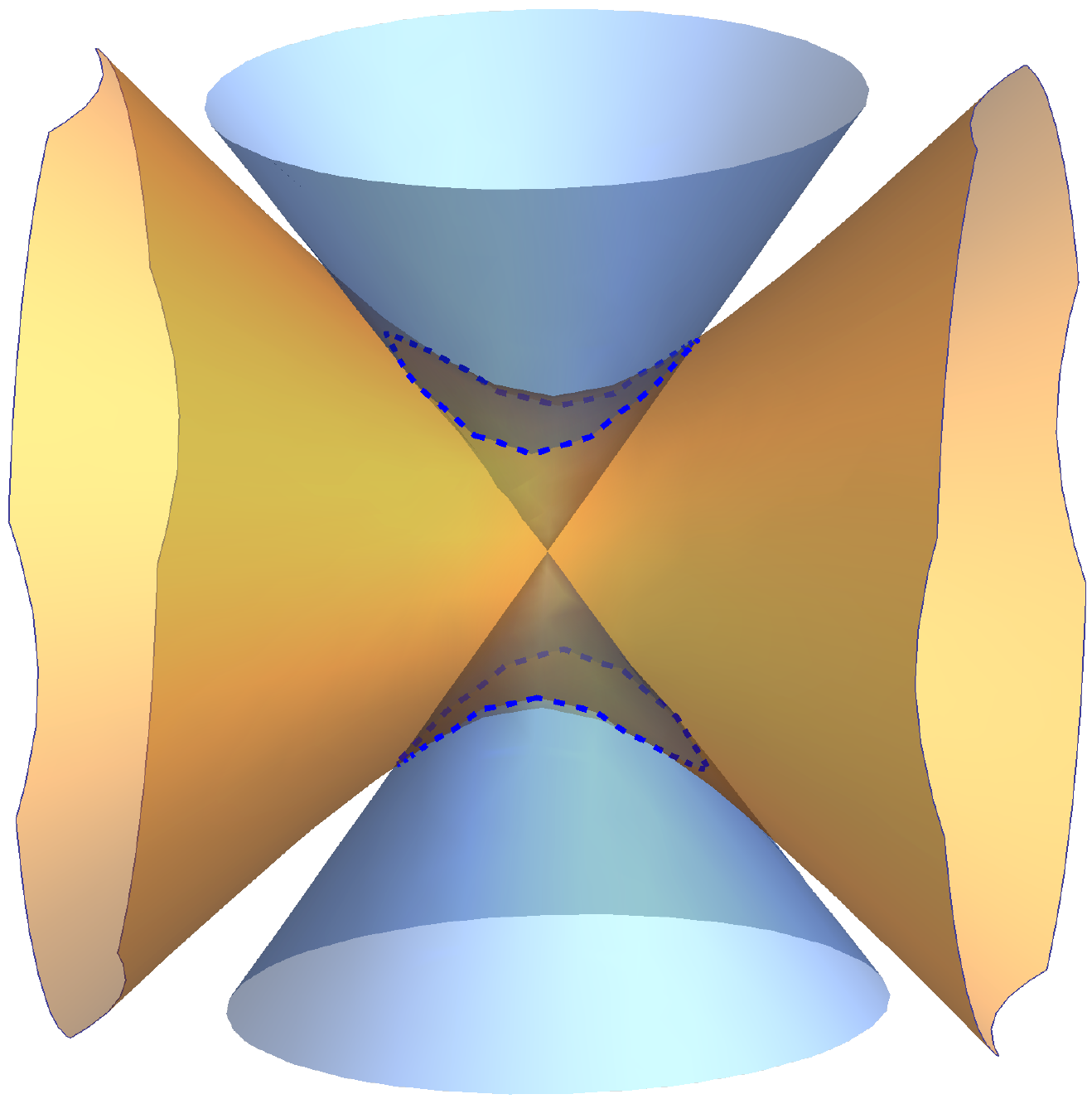}
\end{tabular}
\caption{Two geometric possibilities for the intersection of the cone $\ip{A^{-1}x}{x}=0$ and $\ip{x}{x}=1$ determining a compact domain: the collared (a) and transverse (b) $\Hyp$-ellipse. }
\label{TwoCases}
\end{figure}

\begin{remark}
In the case of the transverse $\Hyp$-ellipse we choose one of the compact domains. 
Without loss of generality we can choose the domain with $x_2>0$.
\label{Choice} 
\end{remark}

In the Klein coordinates $\xi_i = x_i/ x_0$, $i=1,2$, define the central projection by $$\pi_{\xi}: \Mink \to \R{2}, \qquad (x_0,x_1,x_2) \mapsto \left( \frac{x_1}{x_0},\frac{x_2}{x_0}\right)=:(\xi_1,\xi_2).$$ Geodesics in the Minkowski metric $ds^2$ restricted to $\Hyp$ are projected to lines in the Klein $\xi_1\xi_2$-plane \cite{Cal}. 

If two points $x,y \in \Mink$ lie on the same line $\ell$ through the origin, then $$\pi_\xi(x) = \pi_{\xi}(y)$$ as both will be scalar multiples of the direction vector of $\ell$. In particular, $\pi_\xi(x) = \pi_\xi(-x)$, so that the projection $\pi_\xi$ of the collared $\Hyp$-ellipse is a 2-to-1 mapping. 

The image of the boundary curves of the collared and transverse $\Hyp$-ellipse under the Klein projection has equation 
\begin{equation}
\frac{\xi_1^2}{b_1}+ \frac{\xi_2^2}{b_2}=1,
\label{KleinEqn}
\end{equation}
 where $1 < b_1 < b_2$ for the collared $\Hyp$-ellipse and $b_1 < 0 < 1 < b_2$ for the transverse $\Hyp$-ellipse. These curves are plane ellipses and hyperbolas, respectively. 

\begin{defn}
\emph{The pencil of cone} $\ip{Ax}{x}=0$ in $\Mink$ is the family of cones of the form 
\begin{equation}
\ip{Ax}{x} - \lambda \ip{x}{x} = \ip{(A-\lambda I)x}{x}=0.
\end{equation}
\emph{The confocal family} consists of the dual cone $\ip{A^{-1}x}{x}=0$ and the corresponding dual cones 
\begin{equation}
\ip{(A-\lambda I)^{-1}x}{x}=0.
\label{ConfFam} 
\end{equation}
\end{defn}

In \cite{V}, the intersection of confocal quadrics with the sphere and one sheet of the hyperboloid of two sheets are studied in detail in $(n+1)$-dimensional Euclidean and Minkowski space, respectively. In particular, a factorization method from \cite{MV} is used for the integration of the billiard problem in the domain on the sphere and hyperboloid bounded by confocal families of quadrics. Further, the dynamics of such billiard systems are described in terms of hyperelliptic curves and $\theta$-functions. 

\begin{defn}
Denote by $\mathcal{C}_\lambda$ the curve of intersection of $\ip{(A-\lambda I)^{-1}x}{x}=0$ and the hyperboloid of one sheet $\Hyp$.
The curves of intersection can be bounded or unbounded, which we call \emph{elliptic-type} or \emph{hyperbolic-type}, respectively. 
\end{defn}

The curves in Figure \ref{IntersectionTypes} and the curves projected into the $x_1=0$ plane in Figure \ref{ProjOntoXZ} illustrate the previous definition. In particular, the collared and transverse $\Hyp$-ellipse each correspond to the curve $\mathcal{C}_0$. The next proposition follows directly from the definition and a direct calculation. 

\begin{prop} 
\;
\begin{enumerate}[(i)]
    \item If $0<a_0 < a_1 < a_2$, the curve $\mathcal{C}_{\lambda}$ will be of elliptic-type for $\lambda \leq a_0$ and $\mathcal{C}_{\lambda}$ will be hyperbolic-type for $a_1 \leq \lambda \leq a_2$.
    \item If $a_1 < 0 < a_0 < a_2$, the curve $\mathcal{C}_\lambda$ will be of elliptic-type for $a_1 < \lambda < a_2$ and $\mathcal{C}_\lambda$ will be hyperbolic-type for $\lambda \leq a_1$ or  $\lambda \geq a_2$.
    \item For two curves $\mathcal{C}_{\lambda_1}$ and $\mathcal{C}_{\lambda_2}$ to intersect they must be of opposite types in the case $0<a_0 < a_1 < a_2$ and must be of the same type in the case $a_1 < 0 < a_0 < a_2$.
\end{enumerate}
\label{ConfocalCurves}
\end{prop}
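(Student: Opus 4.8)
The plan is to reduce everything to the explicit cone equation and to one-variable sign analysis. Writing $A=\diag(a_0,a_1,a_2)$, the confocal cone $\ip{(A-\lambda I)^{-1}x}{x}=0$ becomes
\[
Q_\lambda(x):=-\frac{x_0^2}{a_0-\lambda}+\frac{x_1^2}{a_1-\lambda}+\frac{x_2^2}{a_2-\lambda}=0,
\]
and $\mathcal{C}_\lambda$ consists of the space-like rays of this cone meeting $\Hyp$. Using the geometric principle behind Proposition \ref{prop:Adiag}, a ray with direction $v$ meets $\Hyp$ iff $\ip{v}{v}>0$, and $\mathcal{C}_\lambda$ escapes to infinity exactly where the cone $\{Q_\lambda=0\}$ approaches the asymptotic cone $\{\ip{x}{x}=0\}$. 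Hence $\mathcal{C}_\lambda$ is of elliptic-type precisely when $\{Q_\lambda=0\}$ is nonempty and disjoint from the light cone, and of hyperbolic-type exactly when $\{Q_\lambda=0\}$ crosses the light cone.

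For (i) and (ii) I would make this effective by restricting $Q_\lambda$ to the light cone. Parametrizing light-like directions as $(1,\cos\theta,\sin\theta)$ and setting $c=\cos^2\theta\in[0,1]$, the restriction is affine in $c$, so the cone meets the light cone iff the endpoint values
\[
g(0)=\frac{a_0-a_2}{(a_0-\lambda)(a_2-\lambda)},\qquad g(1)=\frac{a_0-a_1}{(a_0-\lambda)(a_1-\lambda)}
\]
have opposite signs; thus hyperbolic-type corresponds to $g(0)g(1)<0$, while $g(0)g(1)>0$ leaves the elliptic or the empty case. Running through the $\lambda$-intervals in each ordering and reading the signs of $g(0),g(1)$ off the factors $a_i-\lambda$ pins down the hyperbolic ranges $a_1\le\lambda\le a_2$ in (i) and $\lambda\le a_1$, $\lambda\ge a_2$ in (ii). To separate elliptic from empty when $g(0)g(1)>0$, I would use that on each nappe of the indefinite cone the metric $\ip{x}{x}$ has constant sign, so it suffices to evaluate $\ip{x}{x}$ at one convenient point of $\{Q_\lambda=0\}$: a positive value makes every ray space-like ($\mathcal{C}_\lambda$ elliptic), a negative value puts the cone inside the light cone ($\mathcal{C}_\lambda$ empty). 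This extra sign check on the remaining intervals yields the elliptic ranges $\lambda\le a_0$ in (i) and $a_1<\lambda<a_2$ in (ii).

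For (iii) I would switch to a fixed point $x\in\Hyp$ and the equation for the parameters of confocal curves through it. Clearing denominators in $Q_\lambda(x)=0$ gives
\[
P_x(\lambda):=-x_0^2(a_1-\lambda)(a_2-\lambda)+x_1^2(a_0-\lambda)(a_2-\lambda)+x_2^2(a_0-\lambda)(a_1-\lambda),
\]
whose leading coefficient is $\ip{x}{x}=1$, so $P_x$ is a monic quadratic in $\lambda$; if $\mathcal{C}_{\lambda_1}$ and $\mathcal{C}_{\lambda_2}$ meet at $x$ with $\lambda_1\ne\lambda_2$, then $\lambda_1,\lambda_2$ are its two real roots. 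The key evaluation is
\[
P_x(a_0)=-x_0^2(a_1-a_0)(a_2-a_0),\quad P_x(a_1)=x_1^2(a_0-a_1)(a_2-a_1),\quad P_x(a_2)=x_2^2(a_0-a_2)(a_1-a_2).
\]
In case (i) one gets $P_x(a_0),P_x(a_1)\le 0\le P_x(a_2)$, so (as $P_x\to+\infty$ at $\pm\infty$) one root lies in $(-\infty,a_0]$ and the other in $[a_1,a_2]$, i.e.\ an elliptic and a hyperbolic parameter: opposite types. In case (ii) one gets $P_x(a_1),P_x(a_2)\ge 0$, which for a monic quadratic forbids the two roots from separating $a_1$ from $a_2$ or from lying on opposite sides of $[a_1,a_2]$; hence both roots lie inside $(a_1,a_2)$ or together outside $[a_1,a_2]$, i.e.\ both elliptic or both hyperbolic: the same type.

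The routine parts are the sign bookkeeping for $g(0),g(1)$ and for $P_x(a_0),P_x(a_1),P_x(a_2)$. The one genuine subtlety, and the step I would be most careful about, is the elliptic-versus-empty dichotomy in (i)--(ii): the signature of $Q_\lambda$ alone does not decide it, and one must compare the cone with the light cone (equivalently, test the sign of $\ip{x}{x}$ along the cone) to discard the empty case and confirm that the stated intervals genuinely carry nonempty bounded curves.
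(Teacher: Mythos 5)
Your argument is correct and is precisely the ``direct calculation from the definition'' that the paper invokes without writing out: the classification by whether the cone $\{Q_\lambda=0\}$ crosses the light cone (detected via the affine function $g(c)$), the extra sign test to separate elliptic from empty, and the monic quadratic $P_x(\lambda)$ with leading coefficient $\ip{x}{x}=1$ evaluated at $a_0,a_1,a_2$ all match the paper's setup (cf.\ its treatment of the same numerator in Proposition \ref{NumRoots}). The only caveat, inherited from the statement itself rather than from your proof, is the conventional assignment of types at the degenerate endpoint values $\lambda\in\{a_0,a_1,a_2\}$, e.g.\ when $x_1=0$ forces a root exactly at $a_1$.
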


\begin{figure}[hbtp]
\begin{tabular}{c c}
a) \includegraphics[width=0.4\textwidth]{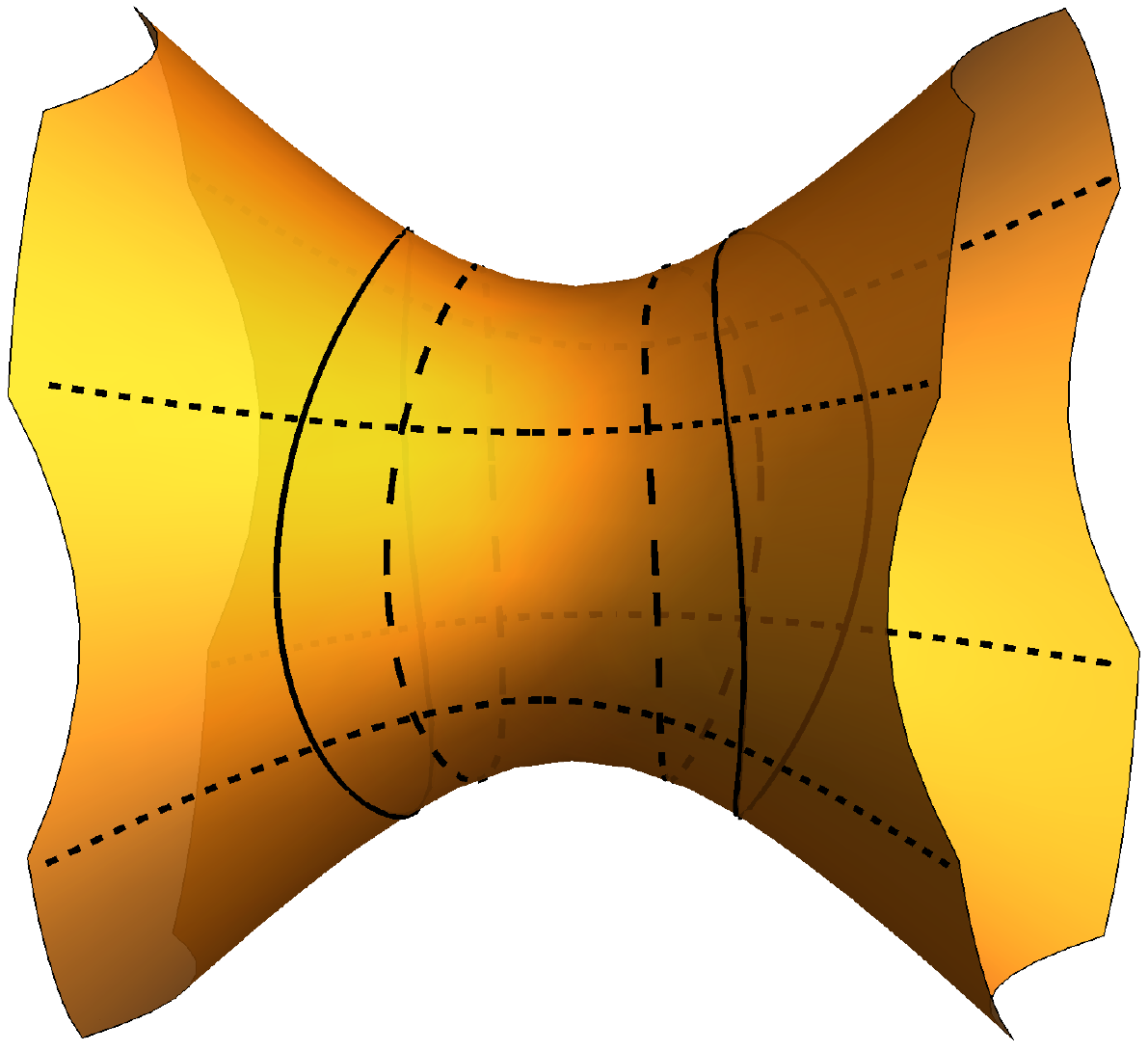}  & b) \includegraphics[width=0.50\textwidth]{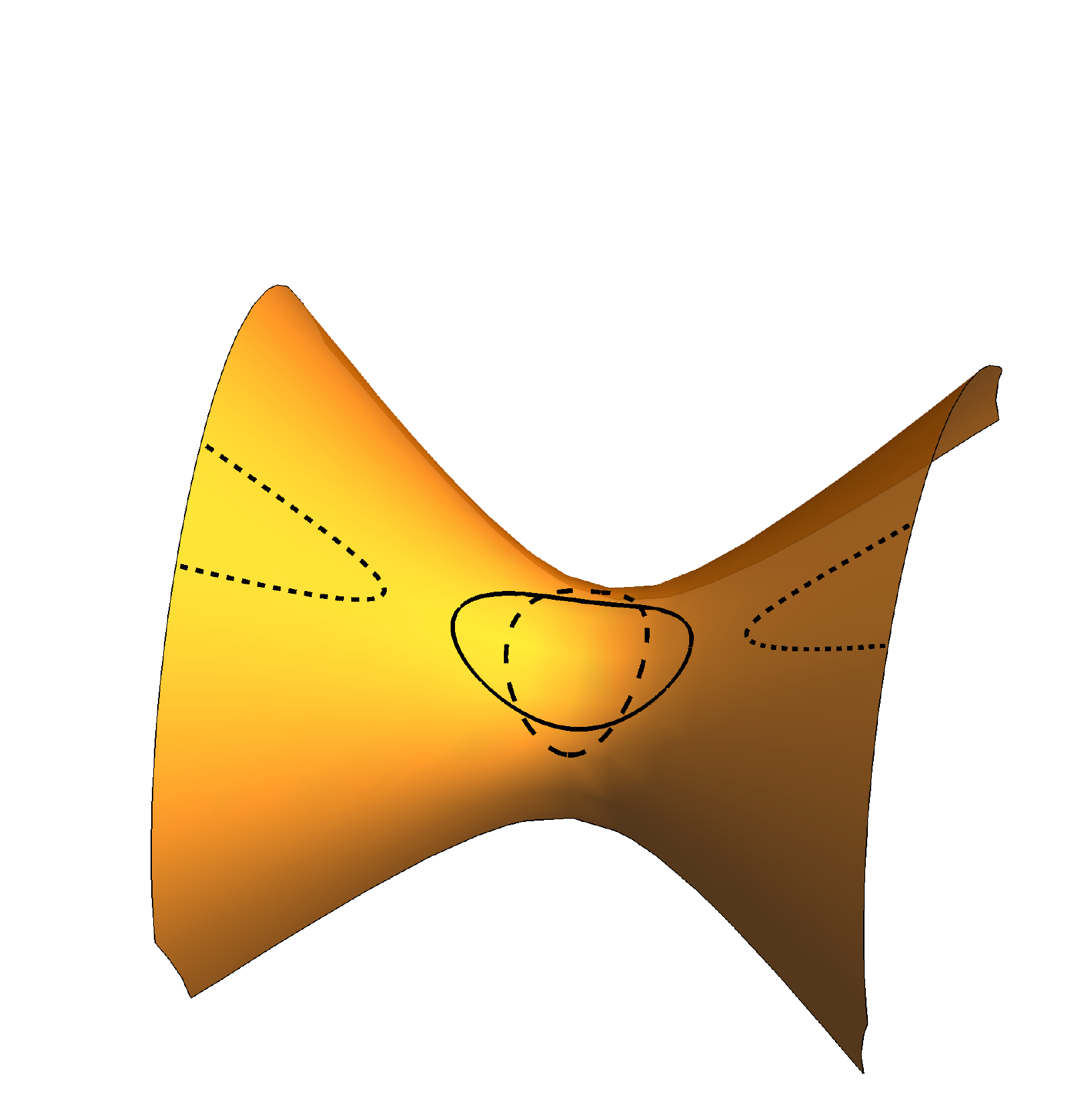}
\end{tabular}
\caption{Intersections of the confocal family with the hyperboloid of one sheet producing curves $\mathcal{C}_\lambda$ of elliptic-type (dashed) and hyperbolic-type (dotted), and the collared and transverse $\Hyp$-ellipse (solid).}
\label{IntersectionTypes}
\end{figure}

The confocal family (\ref{ConfFam}) can be written in the form 
\begin{equation}
-\frac{x_0^2}{a_0-\lambda} + \frac{x_1^2}{a_1-\lambda} + \frac{x_2^2}{a_2-\lambda}=0.
\label{ConfFam1} 
\end{equation}
For each point $(x_0,x_1,x_2) \in \Hyp$, the equation \eqref{ConfFam1} has solutions in $\lambda$ which we call the \emph{generalized Jacobi coordinates} of the point $(x_0,x_1,x_2)$.

\begin{prop}\label{NumRoots}
Let $x \in \Hyp$ and $x_0x_1x_2 \neq 0$. 
\begin{enumerate}[(i)]
    \item If $0<a_0 < a_1 < a_2$, then the equation (\ref{ConfFam1}) has two real roots in $\lambda$ satisfying the inequalities $$\lambda_1 < a_0 < a_1 < \lambda_2 < a_2.$$ 
    \item If $a_1 < 0 < a_0 < a_2$, then the equation (\ref{ConfFam1}) will have $0$ or $2$ real roots depending upon the choice of $x \in \Hyp$. If $x$ is on a ruling of $\Hyp$ connecting the foci $F_{\pm\pm}^i$, $i=0,1,2$ (see, e.g. equation (\ref{FociCoords1})), then equation (\ref{ConfFam1}) has one repeated root $\lambda$ and $\lambda \in \R{}\setminus \{a_0,a_1,a_2\}$. If there are two distinct roots then $\lambda_1$ and $\lambda_2$ satisfy either
        \begin{align}
            & \lambda_1 < \lambda_2 < a_1 < a_0 < a_2 \label{2a}\\
            & a_1 < \lambda_1 < \lambda_2 < a_0 < a_2 \label{2b}\\
            & a_1 < a_0 < \lambda_1 < \lambda_2 < a_2 \label{2c}\\
            & a_1 < a_0< a_2 < \lambda_1 < \lambda_2. \label{2d}
        \end{align}
\end{enumerate}
If $x_0x_1x_2=0$ then:
\begin{enumerate}[(i)]
\setcounter{enumi}{2}
    \item For both cases $0<a_0<a_1<a_2$ and $a_1 < 0 < a_0 < a_2$: if $x_0=0$, then $\lambda_1 = a_0$ and $a_1 \leq \lambda_2 \leq a_2$;
    \item For both cases $0<a_0<a_1<a_2$ and $a_1 < 0 < a_0 < a_2$: if $x_1=0$, then $\lambda_1 = a_1$ and $\lambda_2 \leq a_0$;
    \item If $x_2=0$, then $\lambda_1=a_2$ and 
        $\begin{cases} 
            \lambda_2 \leq a_0 &\text{ if } 0 <a_0 < a_1 < a_2 \\  
            \lambda_2 \geq a_0 &\text{ if } a_1 < 0 < a_0 < a_2. 
        \end{cases}$
\end{enumerate}
\end{prop}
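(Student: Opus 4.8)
The plan is to clear denominators in \eqref{ConfFam1} and study the resulting quadratic. Writing
\[
\phi(\lambda) = -\frac{x_0^2}{a_0-\lambda} + \frac{x_1^2}{a_1-\lambda} + \frac{x_2^2}{a_2-\lambda} = \frac{P(\lambda)}{(a_0-\lambda)(a_1-\lambda)(a_2-\lambda)},
\]
the numerator
\[
P(\lambda) = -x_0^2(a_1-\lambda)(a_2-\lambda) + x_1^2(a_0-\lambda)(a_2-\lambda) + x_2^2(a_0-\lambda)(a_1-\lambda)
\]
is quadratic in $\lambda$ with leading coefficient $-x_0^2+x_1^2+x_2^2 = \ip{x}{x} = 1 > 0$. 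Since $x_0x_1x_2 \neq 0$, the values
\[
\begin{aligned}
P(a_0) &= -x_0^2(a_1-a_0)(a_2-a_0), & P(a_1) &= x_1^2(a_0-a_1)(a_2-a_1),\\
P(a_2) &= x_2^2(a_0-a_2)(a_1-a_2)
\end{aligned}
\]
are all nonzero, so no root of $P$ meets a pole and the roots of $P$ are exactly those of \eqref{ConfFam1}. The whole argument then reduces to reading off the signs of $P$ at $a_0,a_1,a_2$ together with $P(\pm\infty)=+\infty$.

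For part (i), with $0<a_0<a_1<a_2$ one finds $P(a_0)<0$, $P(a_1)<0$, $P(a_2)>0$. Combined with $P(-\infty)=+\infty$, the intermediate value theorem forces a root in $(-\infty,a_0)$ and a root in $(a_1,a_2)$; as $P$ is quadratic these are the only two, giving $\lambda_1 < a_0 < a_1 < \lambda_2 < a_2$.

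For part (ii), with $a_1<0<a_0<a_2$ all three quantities $P(a_0),P(a_1),P(a_2)$ are positive. Since $P$ has positive leading coefficient and is positive at all three poles, it has either no real root or two real roots, which explains the dichotomy. When two distinct roots $\lambda_1<\lambda_2$ exist, $P<0$ precisely on $(\lambda_1,\lambda_2)$, so none of $a_0,a_1,a_2$ lies in this interval; as $a_1<a_0<a_2$ partition $\R{}$ into four pieces, both roots must lie in a single one of them, yielding exactly the four alternatives \eqref{2a}--\eqref{2d}. A double root occurs exactly when the discriminant of $P$, regarded as a quadratic in $\lambda$ (itself a polynomial in $x$), vanishes, and then $P(\lambda)=0$ while $P(a_i)>0$ forces $\lambda\notin\{a_0,a_1,a_2\}$. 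Identifying this discriminant locus with the rulings of $\Hyp$ joining the foci $F^i_{\pm\pm}$ is the step I expect to be the main obstacle: it requires either a direct factorization of the discriminant in terms of the focal coordinates \eqref{FociCoords1}, or a geometric argument that a point admits a repeated generalized Jacobi coordinate exactly when the confocal cone through it is tangent to $\Hyp$ along a ruling.

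Finally, for the degenerate cases (iii)--(v) I would exploit that setting a coordinate to zero makes the corresponding pole a root. If $x_0=0$ then $P(\lambda)=(a_0-\lambda)\bigl[x_1^2(a_2-\lambda)+x_2^2(a_1-\lambda)\bigr]$, so $\lambda_1=a_0$ and the second root is the weighted average of $a_1$ and $a_2$, hence lies in $[a_1,a_2]$. The cases $x_1=0$ and $x_2=0$ are analogous: after factoring out $(a_1-\lambda)$ or $(a_2-\lambda)$ the remaining linear factor is solved for $\lambda_2$, and substituting the constraint $\ip{x}{x}=1$ (for instance $x_2^2=1+x_0^2$ when $x_1=0$) rewrites the second root as $a_0$ minus a nonnegative multiple of $(a_2-a_0)$ or $(a_1-a_0)$, from which the stated one-sided bounds follow at once, with the sign of $a_1-a_0$ accounting for the two subcases in (v).
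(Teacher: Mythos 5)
Your reduction to the monic quadratic numerator $P(\lambda)$ and the sign bookkeeping at the three poles is correct, and for parts (i) and (ii) it is actually tidier than the paper's route. The paper computes the discriminant $\Delta$ explicitly in each case (exhibiting it as a sum of positive terms when $0<a_0<a_1<a_2$), tracks the sign changes of the left-hand side across the poles, and pins down which root is which via a symmetry argument about the midpoint $\tau/2$ of the two roots. Your observation that $P(a_0),P(a_1),P(a_2)$ have signs $(-,-,+)$ in case (i) and $(+,+,+)$ in case (ii), combined with the leading coefficient $\ip{x}{x}=1$, gives part (i) by the intermediate value theorem alone and yields the four alternatives (\ref{2a})--(\ref{2d}) uniformly (both roots must sit in a single one of the four intervals cut out by $a_1<a_0<a_2$), whereas the paper argues (\ref{2a}) and (\ref{2d}) separately. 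Your explicit factorizations in the degenerate cases (iii)--(v) also supply the ``direct calculation'' that the paper only gestures at, and they are correct.

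There is, however, one genuine gap, which you yourself flag: the assertion that the repeated root occurs exactly when $x$ lies on a ruling of $\Hyp$ through the foci $F^i_{\pm\pm}$ is part of the proposition, and announcing that it ``requires either a direct factorization or a geometric argument'' is not a proof of it. The paper closes this by writing the discriminant in case (ii) as
\begin{equation*}
\Delta=\left[(a_0-a_1)-\left((a_2-a_0)x_0^2+(a_2-a_1)x_1^2\right)\right]^2-4(a_2-a_0)(a_2-a_1)x_0^2x_1^2,
\end{equation*}
which is a difference of squares and factors so that $\Delta=0$ becomes the two pairs of parallel lines $\left|x_0\sqrt{a_2-a_0}\pm x_1\sqrt{a_2-a_1}\right|=\sqrt{a_0-a_1}$ in the $x_0x_1$-plane (recall $x_2$ is determined by $x_0,x_1$ via Remark \ref{Choice}); these lines are tangent to $-x_0^2+x_1^2=1$ at the foci and are the projections of the rulings of $\Hyp$ through the $F^i_{\pm\pm}$. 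You should carry out this factorization, or an equivalent tangency argument, to complete case (ii); the remainder of your proposal stands as written.
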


\begin{proof}
Let $L(\lambda)$ denote the left hand side of (\ref{ConfFam1}). Combining terms together, the numerator is monic quadratic in $\lambda$ and has simple poles at $\lambda = a_0,a_1,a_2$. 

Consider the two cases $(i)$ and $(ii)$ separately. If $0<a_0 < a_1 < a_2$, then the discriminant of the numerator of $L(\lambda)$ can be written as 
\begin{align}
\Delta &= \left(-(a_2 - a_1) x_0^2 + (a_2 - a_0) x_1^2\right)^2 + (a_1 - a_0)^2 x_2^4  \nonumber\\
& \qquad + 2 (a_1 - a_0)\left((a_2 - a_1) x_0^2 + (a_2 - a_0) x_1^2\right) x_2^2. 
\end{align} 
Each term of the form $(a_i-a_j)$ is positive, so $\Delta>0$ and there will be two distinct roots in $\lambda$.  At the pole $\lambda=a_0$, the sign of $L(\lambda)$ changes from negative to positive while at the poles $\lambda = a_1, a_2$ the sign changes from positive to negative, which shows there is exactly one root $\lambda_2$ satisfying $a_1 < \lambda_2 < a_2$. Further, this implies there cannot be a root between $a_0$ and $a_1$. 
We can write the roots as 
 \begin{align}
 \lambda_1,\lambda_2 = \frac{\tau \pm \sqrt{\Delta}}{2}
 \end{align}
where $\tau = -(a_1 + a_2) x_0^2 + (a_0 + a_2) x_1^2 + (a_0 + a_1) x_2^2$. The roots $\lambda_1, \lambda_2$ are symmetric about $\lambda = \tau/2$ where $\tau/2 < (a_0+a_2)/2<a_2$, and so it must be the case that $\lambda_2 = (\tau +\sqrt{\Delta})/2$. Because $\lambda_1 < \lambda_2$, the only remaining possibility is that $\lambda_1 < a_0$. This proves part $(i)$. 

Next, suppose $a_1 < 0 < a_0 < a_2$. Because we assume $x_2>0$ (see remark \ref{Choice}), the coordinates $x_0,x_1$ uniquely determine $x_2$. The discriminant of the numerator of $L(\lambda)$ can  be written as 
\begin{align}
\Delta &= \left[ (a_0 - a_1) - \left((a_2 - a_0 ) x_0^2 + (a_2 - a_1 ) x_1^2\right) \right]^2 -  4 (a_2 - a_0) (a_2 - a_1) x_0^2 x_1^2,
\end{align} 
so that terms of the form $(a_i-a_j)$ are positive and $\Delta$ can be negative, zero, or positive.  

At the pole $\lambda = a_0$, $L(\lambda)$ changes from negative to positive and at the poles $\lambda = a_1, a_2$ $L(\lambda)$ changes from positive to negative. If $\Delta>0$, then the existence of one root between consecutive poles implies the second root must also be in between the same consecutive poles. This proves cases (\ref{2c}), (\ref{2d}). The cases of (\ref{2a}) and (\ref{2d}) follow from a similar argument to that of part $(i)$, as one can show that if $\lambda_2 < a_1$ then $\tau/2 < \lambda_2$, and if $a_2 < \lambda_1$ then $\lambda _1 < \tau/2$, respectively.

\begin{figure}[hbtp]
\includegraphics[width=0.9\textwidth]{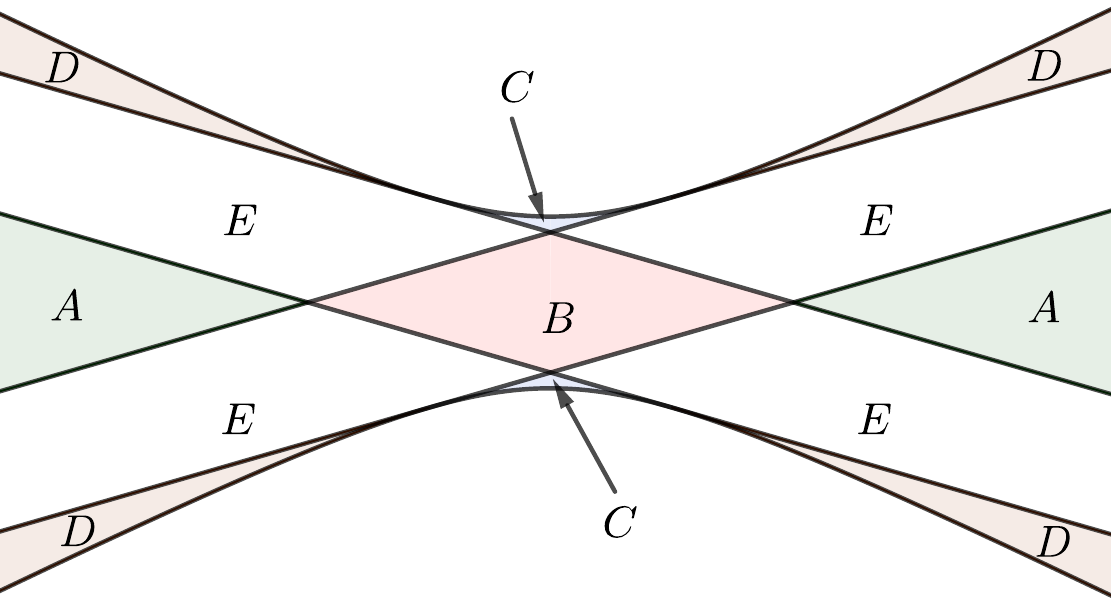}
\caption{The $x_0x_1$-plane divided into five regions when $a_1<0 < a_0 < a_2$ and $-x_0^2+x_1^2 \leq 1$. For $(x_0,x_1)$ in region $A$, (\ref{2a}) applies; region $B$, (\ref{2a}) applies; region $C$, (\ref{2c}) applies; and region $D$, (\ref{2d}) applies. There are no roots if $(x_0,x_1)$ is in region $E$ and one root if $(x_0,x_1)$ is on one of the four dividing lines. }
\label{MinkPlane}
\end{figure}

The locus of points in the $x_0x_1$-plane satisfying $\Delta=0$ are two pairs of parallel lines (see figure \ref{MinkPlane}) 
given by $$\left|x_0\sqrt{a_2-a_0} \pm x_1 \sqrt{a_2-a_1}\right| = \sqrt{a_0-a_1}.$$ These lines are tangent to the hyperbola $-x_0^2+x_1^2=1$ at the foci $F_{\pm\pm}^2$ and are the projection of rulings of $\Hyp$ connecting the foci $F_{\pm\pm}^i$ (see equation (\ref{FociCoords1}) in example \ref{confocalexample}). The four lines and this hyperbola divide the region $-x_0^2+x_1^2 \leq 1$ into five regions of interest, depicted in figure \ref{MinkPlane}. By symmetry we can address only the first quadrant and say that cases (\ref{2a}), (\ref{2b}), (\ref{2c}), (\ref{2d}), correspond to $(x_0,x_1)$ in regions $A$, $B$, $C$, $D$, respectively. 
If $x$ is in region $E$ between the lines then $\Delta<0$ and $L(\lambda)$ has no real roots. 

Lastly, the cases when $x_0x_1x_2=0$ follow from a direct calculation.
\end{proof}

This proposition is analogous to Proposition 2 of \cite{V} and Theorem 4.5 of \cite{KT}. However, due to our definition of the confocal family, we do not have the same topological considerations as stated in \cite{KT}. 

Because the collared and transverse $\Hyp$-ellipse are precisely the curves of intersection with $\Hyp$ corresponding to the member of the confocal family (\ref{ConfFam1}) with $\lambda=0$, we can reduce the previous propositions to a simpler statement in terms of generalized Jacobi coordinates. 

\begin{cor}
For any $x$ in the interior of the collared $\Hyp$-ellipse, the generalized Jacobi coordinates of $x$ satisfy $0 < \lambda_1 \leq a_0$, $a_1 \leq \lambda_2 \leq a_2$. And for any $x$ in the interior of the transverse $\Hyp$-ellipse, the generalized Jacobi coordinates of $x$ satisfy $a_1 \leq \lambda_1 < 0 <\lambda_2 \leq a_0$. 
\end{cor}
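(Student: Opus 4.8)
The plan is to read off the corollary from Proposition \ref{NumRoots} by specializing the confocal family to its distinguished member $\lambda=0$, whose intersection with $\Hyp$ is exactly the collared (resp.\ transverse) $\Hyp$-ellipse. Write $L(\lambda)$ for the left-hand side of (\ref{ConfFam1}) as in the proof of Proposition \ref{NumRoots}, so that $L(\lambda)=P(\lambda)/\big((a_0-\lambda)(a_1-\lambda)(a_2-\lambda)\big)$ with $P$ the monic quadratic whose roots are the generalized Jacobi coordinates $\lambda_1\le\lambda_2$ of $x$. The key observation is that $L(0)=-x_0^2/a_0+x_1^2/a_1+x_2^2/a_2$ is precisely the quadratic form defining the cone whose intersection with $\Hyp$ is $\mathcal{C}_0$; hence $\{L(0)=0\}\cap\Hyp$ is exactly the boundary ellipse, and $L(0)$ keeps a constant nonzero sign on the connected open interior.

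The decisive step is to fix that sign. First I would identify the interior with the compact domain around the throat of $\Hyp$ and evaluate $L(0)$ at a convenient interior test point. In the collared case ($0<a_0<a_1<a_2$) any throat point $x=(0,x_1,x_2)$ with $x_1^2+x_2^2=1$ lies in the collar and gives $L(0)=x_1^2/a_1+x_2^2/a_2>0$; in the transverse case ($a_1<0<a_0<a_2$, with $x_2>0$ by Remark \ref{Choice}) the point $(0,0,1)$ lies in the chosen compact domain and gives $L(0)=1/a_2>0$. By the constant-sign remark, $L(0)>0$ throughout the interior in both cases. I expect this to be the main obstacle: it requires correctly matching ``interior'' to the compact component and checking the test point lies in it (e.g.\ by confirming that points far out on $\Hyp$ have $L(0)<0$, so the unbounded component is the one where $L(0)<0$).

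For the collared case the upper bounds are already contained in Proposition \ref{NumRoots}(i), which gives $\lambda_1<a_0<a_1<\lambda_2<a_2$. To pin down the lower bound I would use the sign data near the pole $\lambda=a_0$ together with $L(0)>0$: since $L\to 0^+$ as $\lambda\to-\infty$ and $L(a_0^-)=-\infty$, the unique root below $a_0$ must lie in $(0,a_0)$, because $L(0)>0$ forces a sign change on $(0,a_0)$. Hence $0<\lambda_1<a_0$ and $a_1<\lambda_2<a_2$ on the open interior, which yields the claimed $0<\lambda_1\le a_0$, $a_1\le\lambda_2\le a_2$ once the boundary-coordinate loci are included.

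For the transverse case I would argue directly on the interval between the consecutive poles $a_1<0<a_0$. Near these poles $L(a_1^+)=-\infty$ and $L(a_0^-)=-\infty$, while $L(0)>0$; by the intermediate value theorem there is a root in $(a_1,0)$ and a root in $(0,a_0)$, and since $P$ has degree two these are all the roots. Thus $a_1<\lambda_1<0<\lambda_2<a_0$ on the open interior, giving $a_1\le\lambda_1<0<\lambda_2\le a_0$. In both cases the endpoint equalities and the behaviour on the coordinate loci $x_0x_1x_2=0$ follow from parts (iii)--(v) of Proposition \ref{NumRoots}, so the only genuinely new input is the sign of $L(0)$ established above.
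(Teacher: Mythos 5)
Your proof is correct and follows essentially the same route as the paper, which states the corollary as an immediate reduction of Proposition \ref{NumRoots} to the member $\lambda=0$ of the confocal family without writing out details. The one genuinely new ingredient you supply --- that $L(0)=\ip{A^{-1}x}{x}$ is positive on the compact interior component, verified at a throat test point, which then pins the roots to the correct side of $0$ via the sign changes at the poles --- is exactly the step the paper leaves implicit, and you carry it out correctly.
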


\noindent From the previous propositions we conclude the following theorem.

\begin{theorem} 
For any point $x$ on $\Hyp$ there are either 0, 1, or 2 confocal conics of the form (\ref{ConfFam1}) passing through $x$. Further, in the case of two confocal conics, the confocal conics are pairwise orthogonal with respect to the Minkowski inner product $\ip{\cdot}{\cdot}$. 
\end{theorem}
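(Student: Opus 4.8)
The plan is to obtain the first assertion directly from Proposition \ref{NumRoots}. A confocal conic of the form (\ref{ConfFam1}) passes through $x$ precisely when the corresponding value of $\lambda$ is a real root of (\ref{ConfFam1}), and distinct roots yield distinct members of the confocal family. Proposition \ref{NumRoots} enumerates these roots in every configuration and shows their number is always $0$, $1$, or $2$, with the value $1$ arising exactly for the repeated-root/ruling case; reading off that count settles this part.

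For the orthogonality claim I would work with the vectors $u_i = (A - \lambda_i I)^{-1}x$, $i=1,2$, where $\lambda_1 \ne \lambda_2$ are the two generalized Jacobi coordinates of $x$. Because $x$ lies on the cone (\ref{ConfFam}) with parameter $\lambda_i$, we have $\ip{u_i}{x} = \ip{(A-\lambda_i I)^{-1}x}{x} = 0$, so $u_1, u_2$ lie in the tangent plane $T_x\Hyp = x^{\perp}$. The tangent line $t_i$ to $\mathcal{C}_{\lambda_i}$ at $x$ is the intersection of the tangent plane to $\Hyp$ with the tangent plane to the cone, i.e. $\{\, v : \ip{x}{v} = 0,\ \ip{u_i}{v}=0 \,\} = \{u_i, x\}^{\perp}$.

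The crux is the evaluation of $\ip{u_1}{u_2}$. Using that $A$, and hence each $(A-\lambda_i I)^{-1}$, is self-adjoint for $\ip{\cdot}{\cdot}$ together with the resolvent identity
\[
(A-\lambda_1 I)^{-1}(A-\lambda_2 I)^{-1} = \frac{1}{\lambda_1-\lambda_2}\left[(A-\lambda_1 I)^{-1} - (A-\lambda_2 I)^{-1}\right],
\]
I get
\[
\ip{u_1}{u_2} = \frac{1}{\lambda_1-\lambda_2}\left(\ip{(A-\lambda_1 I)^{-1}x}{x} - \ip{(A-\lambda_2 I)^{-1}x}{x}\right) = 0,
\]
since each bracketed term is the defining equation of the respective cone. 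Thus $u_1 \perp u_2$. Finally, $u_2$ is orthogonal to both $x$ and $u_1$; as $x$ and $u_1$ are linearly independent (a relation $u_1 = cx$ would force $c = \ip{u_1}{x} = 0$ because $\ip{x}{x}=1$) and the ambient form is nondegenerate, $\{u_1,x\}^{\perp}$ is one-dimensional, forcing $t_1 \parallel u_2$. Symmetrically $t_2 \parallel u_1$, so $\ip{t_1}{t_2} \propto \ip{u_2}{u_1} = 0$ and the two conics meet orthogonally.

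The main obstacle I expect is not the algebra but the bookkeeping forced by the Lorentzian signature. I must confirm that the two-dimensional spans involved are nondegenerate so that their orthogonal complements are genuinely one-dimensional, and I must rule out the degenerate configurations---$x_0x_1x_2=0$, or $x$ on a ruling joining the foci---in which $u_1$ and $u_2$ could become parallel or null. In precisely those situations the two roots coincide or the intersection fails to be transverse, so they lie outside the two-distinct-conics hypothesis; checking this carefully against the root inequalities of Proposition \ref{NumRoots} is the step demanding the most attention.
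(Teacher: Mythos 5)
Your proposal is correct and follows essentially the same route as the paper: the count of $0$, $1$, or $2$ conics is read off from Proposition \ref{NumRoots}, and the orthogonality is the standard resolvent computation $\ip{(A-\lambda_1 I)^{-1}x}{(A-\lambda_2 I)^{-1}x}=0$, which is exactly the argument of Theorem 4.5 of \cite{KT} that the paper cites rather than reproduces. Your write-up merely makes explicit the identification of the tangent lines with $\{u_i,x\}^{\perp}$ and the linear-independence check, which the paper leaves to the reference.
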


\noindent The orthogonality proof follows from the same argument given in Theorem 4.5 in \cite{KT}.

\begin{example}\label{confocalexample}
To illustrate properties of the confocal family on $\Hyp$, consider the confocal family
\begin{equation}
\ip{(A-\lambda I)^{-1}x}{x}=0
\end{equation}
in $\Mink$. The initial cone with $\lambda=0$ is given by the equation $$-\frac{x_0^2}{a_0} + \frac{x_1^2}{a_1} + \frac{x_2^2}{a_2}=0.$$ The foci corresponding to the degenerate case $\lambda = a_1$ of the confocal family 
$$-\frac{x_0^2}{a_0-\lambda} + \frac{x_1^2}{a_1-\lambda} + \frac{x_2^2}{a_2-\lambda}=0$$
have coordinates 
\begin{equation}
F_{\pm \pm}^1=\left( \pm \sqrt{\frac{a_0-a_1}{a_2-a_0}}, 0, \pm \sqrt{\frac{a_2-a_1}{a_2-a_0}}\right).
\label{FociCoords1}
\end{equation}
The other sets of degenerate foci corresponding to $\lambda = a_0$ and $\lambda=a_2$, $F_{\pm\pm}^0$ and $F_{\pm\pm}^2$, respectively, can be calculated similarly. 
When real, the coordinates of the foci are shown in figure \ref{ProjOntoXZ}. Further, the confocal family has three other degenerate conics: $\lambda = a_0$ corresponds to the $x_2$-axis; $\lambda = a_2$ corresponds to the $x_0$-axis; and the line at infinity corresponds to $\lambda = \pm \infty$.

\begin{figure}[btp]
\begin{tabular}{c c}
a) \includegraphics[width=0.44\textwidth]{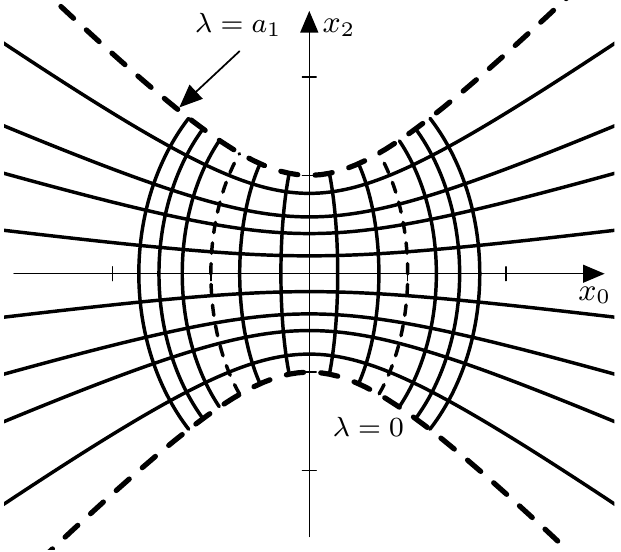}  
&
b) \includegraphics[width=0.44\textwidth]{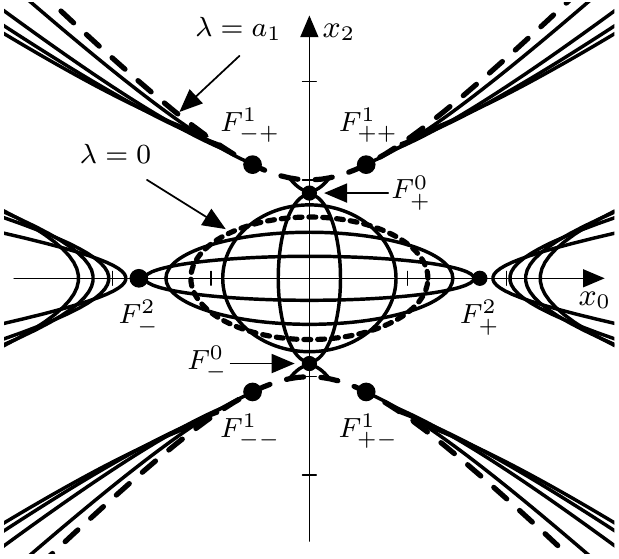}
\end{tabular}
\caption{The projection of the intersections of the confocal family (\ref{ConfFam1}) and the hyperboloid of one sheet $\ip{x}{x}=1$ onto the $x_0x_2$-plane. The cases when $0 < a_0 < a_1 < a_2$ and $a_1 < 0 < a_0 < a_2$ are shown in (a) and (b), respectively. The foci $F_{\pm}^0$ and $F_{\pm}^2$ shown are the projections of $F_{\pm\pm}^0$ and $F_{\pm \pm}^2$, respectively, onto the $x_0x_2$-plane. }
\label{ProjOntoXZ}
\end{figure}

After this projection onto the $x_1=0$ plane, the family of curves resembles that of confocal conics in the Minkowski plane $\mathbf{M}^2$. See \cite{BM,DR3,DR4} for a review of the basic properties. In such a setting, these curves are of the form 
\begin{equation}
\frac{x_0^2}{\frac{a_0-\lambda }{a_1-a_0}}-\frac{x_2^2}{\frac{a_2-\lambda }{a_1-a_2}}=1
\label{ProjectedCurvesEqn}
\end{equation}
and are shown in figure \ref{ProjOntoXZ} for varying $\lambda$. 
The equations of the four separating lines are easily derived from the four degenerate foci. 
\end{example}

\section{Billiards on $\Hyp$}
\label{BonH}

The study of billiards in pseudo-Euclidean spaces was first discussed in \cite{KT}, while billiards within confocal families in pseudo-Euclidean spaces is discussed in \cite{DR3,DR4}. 

To begin our examination of billiards on $\Hyp$, we interpret the billiard flow in the collared or transverse $\Hyp$-ellipse as consisting of tangent vectors who coincide with the geodesic flow on $\Hyp$. Suppose a vector $v$ hits the boundary at a point $p$ and let $n_p$ be the normal vector of $T_p\Hyp$, the tangent plane to $\Hyp$ at $p$. The billiard motion stops if $p$ is a singular point, i.e. $\ip{n_p}{n_p}=0$. If $p$ is not a singular point, then $n_p$ is transverse to $T_p\Hyp$. Decompose $v = t + n$ into its normal and tangential components so that its reflection is $v^\prime = t-n$. Clearly $|v|^2 = |v^\prime|^2$, so the type of the geodesic is preserved under the billiard reflection. In particular, we note that by construction the boundaries of the collared and transverse $\Hyp$-ellipses consist entirely of nonsingular points. 

Armed with specific criteria for geodesic connectedness on $\Hyp$ from proposition \ref{GeodesicProp}, we apply these concepts to billiards inside the collared and transverse $\Hyp$-ellipse. 

\begin{theorem}
Let $p$ and $q$ be two points on the hyperboloid $\Hyp$.
\begin{enumerate}[(i)]
    \item  If $p$ and $q$ are nonantipodal points on opposing component curves of the collared $\Hyp$-ellipse, then there exists a unique geodesic connecting $p$ to $q$. If the geodesic is light- or time-like, the arc of the geodesic from $p$ to $q$ is contained entirely inside the collared $\Hyp$-ellipse. If the geodesic is space-like, then the distance-minimizing arc of the geodesic is contained in the collared $\Hyp$-ellipse.  
    \item Let $p$ and $q$ be distinct points on the same component curve of the collared $\Hyp$-ellipse. Then $p$ and $q$ can either be connected by a space-like geodesic or $p$ and $q$ cannot be connected by a geodesic.  The length-minimizing arc of the space-like geodesic connecting $p$ and $q$ lies outside the collared $\Hyp$-ellipse. 
    \item Let $p$ and $q$ be distinct points on the transverse $\Hyp$-ellipse. Then $p$ and $q$ can be connected by an arc of a geodesic which stays entirely within the transverse $\Hyp$-ellipse.
\end{enumerate}
\label{DifferentBoundary}
\end{theorem}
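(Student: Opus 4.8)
The unifying observation is that for any $p,q\in\Hyp$ one has $\ip{q-p}{q-p}=\ip{q}{q}-2\ip{p}{q}+\ip{p}{p}=2\left(1-\ip{p}{q}\right)$, so the Minkowski causal type of the chord $q-p$ is governed by the position of $\ip{p}{q}$ relative to $1$, and Proposition \ref{GeodesicProp} then reads off the connecting geodesic directly from the single number $\ip{p}{q}$. The plan is therefore to control $\ip{p}{q}$ in each configuration of boundary points first, and only afterwards to locate the connecting arc. I would begin by recording that, in the coordinates of Definition \ref{ElHyp}, each component of the collared $\Hyp$-ellipse lies over the plane ellipse $\alpha x_1^2+\beta x_2^2=1$ with $\alpha=\tfrac{a_1-a_0}{a_1}$, $\beta=\tfrac{a_2-a_0}{a_2}\in(0,1)$, and that $x_0=\pm\sqrt{x_1^2+x_2^2-1}$ separates the two components by the sign of $x_0$ (no point of the curve has $x_0=0$, since the cone meets the neck only at the vertex).

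The technical core, from which the rest is bookkeeping, is the claim that if $p\neq q$ lie on the \emph{same} component of the collared $\Hyp$-ellipse then $\ip{p}{q}<1$, i.e. the chord $q-p$ is space-like. Writing $u=(p_1,p_2)$, $v=(q_1,q_2)$ and $a=|u|^2,\ b=|v|^2,\ c=u\cdot v$, and using $p_0q_0=\sqrt{(a-1)(b-1)}$ on a fixed component, this reduces to $\sqrt{(a-1)(b-1)}>c-1$; the only nontrivial case is $c>1$, where squaring gives $(a-1)(b-1)>(c-1)^2$, equivalently $|u|^2|v|^2-(u\cdot v)^2>|u-v|^2$. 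Parametrising $u,v$ on the ellipse and writing $m,d$ for the half-sum and half-difference of the angular parameters, the inequality reduces to an elementary trigonometric estimate whose decisive ingredient, given $c>1$, is the positivity $(1-\alpha)(1-\beta)\ge 0$—immediate since $\alpha,\beta\in(0,1)$. This establishes the dichotomy in part (ii): by Proposition \ref{GeodesicProp} a same-component pair is either space-like connected ($-1<\ip{p}{q}<1$) or not connected at all ($\ip{p}{q}\le -1$).

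Part (i) then follows by antipodal duality. If $p$ lies on one component and $q$ on the other, then $q'=-q$ lies on the same component as $p$ and $\ip{p}{q}=-\ip{p}{q'}$; since $p,q$ are nonantipodal we have $p\neq q'$, so the core claim gives $\ip{p}{q'}<1$ and hence $\ip{p}{q}>-1$. Thus a unique connecting geodesic always exists, and its type is time-, light-, or space-like exactly as $\ip{p}{q}>1$, $=1$, or $\in(-1,1)$, matching the three sub-cases of the statement. For the transverse $\Hyp$-ellipse I would run the same reduction on the chosen component ($x_2>0$), which lies over an arc of a plane ellipse; the analogous planar inequality yields $\ip{p}{q}>-1$, so any two of its points are connectable.

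It remains to locate the connecting arc, which I expect to be the main obstacle. I would use the cone function $F(x)=\ip{A^{-1}x}{x}$, which by Proposition \ref{prop:Adiag} and the discussion following it is positive on the compact collar domain, negative on the unbounded domains, and zero on the boundary. A connecting geodesic lies in a central plane $\Pi$, and $\Pi\cap\{F=0\}$ consists of two lines through the origin, so the geodesic conic $\Pi\cap\Hyp$ meets the boundary in at most four points; since $F$ changes sign at each simple crossing, tracking the sign of $F$ along the conic shows on which side each arc lies. For opposite components the connecting arc threads the neck region $\{F>0\}$ and so stays inside the collar, whereas for a same-component pair the minimizing arc lies in $\{F<0\}$, i.e. outside; equivalently, passing to the Klein coordinates $(\xi_1,\xi_2)$—where geodesics become straight lines and the collar becomes the \emph{exterior} of the plane ellipse \eqref{KleinEqn}—the chord of the ellipse and its complementary rays make the inside/outside alternative transparent. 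The two genuinely delicate points are (a) identifying the distance-minimizing arc of a \emph{periodic} space-like geodesic and verifying it is the one on the asserted side, and (b) in the transverse case accounting for the reality constraint $x_1^2+x_2^2\ge 1$ and the choice $x_2>0$, together with possible tangencies that raise the multiplicity of a crossing and must be handled separately.
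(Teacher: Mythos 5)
Your proposal is correct in substance but reaches the key dichotomy by a genuinely different route from the paper. The paper argues synthetically: for a fixed boundary point $p$ it intersects the collared $\Hyp$-ellipse with the plane $\ip{p}{x}=1$ (whose trace on $\Hyp$ is the pair of light-like rulings through $p$) and with $\ip{-p}{x}=1$, and reads off from how these planes cut the two component curves which arcs of the boundary are time-, light-, or space-like connected to $p$, asserting along the way that $\ip{p}{s}\ge -1$ on the opposite component and $\ip{p}{s}\le 1$ on the same component. You instead prove that last assertion directly: parametrising the component over the plane ellipse and reducing $\ip{p}{q}<1$ to the inequality $|u|^2|v|^2-(u\cdot v)^2>|u-v|^2$, which your trigonometric estimate with $\alpha,\beta\in(0,1)$ does establish; part (i) then falls out by the antipodal symmetry $\ip{p}{q}=-\ip{p}{-q}$, which is a cleaner mechanism than the paper's second separating plane. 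What your computation buys is a verification of the one quantitative fact the paper leaves implicit; what the paper's picture buys is the finer classification (exactly which sub-arcs of $\mathcal{E}_\pm$ are time-, light-, or space-like connected, with $q,r$ as the light-like transition points), which feeds into Corollary \ref{DifferentBoundary2}. On the location of the connecting arc, your proposal remains a sketch and you flag this yourself; to be fair, the paper is equally terse there, simply asserting where the length-minimizing arc lies. Your Klein-coordinate observation --- that the collar maps to the \emph{exterior} of the ellipse \eqref{KleinEqn}, so the chord versus complementary-rays alternative decides inside/outside --- is a sound way to close that gap, provided you track carefully how the $2$-to-$1$ projection matches arcs of the central-plane conic on $\Hyp$ to the chord and the rays; and note that for the transverse case the chosen component is a graph over the \emph{whole} plane ellipse in the $(x_0,x_1)$-plane, not an arc of it, and the estimate $\ip{p}{q}>-1$ there is a slightly different computation (the graph direction is the space-like $x_2$), which you assert but do not carry out.
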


\begin{proof}
Fix a point $p$ on one component curve, $\mathcal{E}_+$, of the collared $\Hyp$-ellipse. The plane $\ip{p}{x}=1$ intersects the collared $\Hyp$-ellipse in three distinct points, $p$, $q$, and $r$, where $q$ and $r$ are on the opposite component curve, $\mathcal{E}_-$, of the collared $\Hyp$-ellipse. The plane $\ip{p}{x}=1$ separates $\mathcal{E}_-$ into two disjoint curves, one whose points can be connected to $p$ by a time-like geodesic, and the other whose points can be connected to $p$ by a space-like geodesic. In the case of the space-like geodesics, the length-minimizing arc of the space-like geodesic will lie between $\mathcal{E}_-$ and $\mathcal{E}_+$ and represents the billiard trajectory. By construction, the points $q$ and $r$ are the only points on $\mathcal{E}_-$ that can be connected to $p$ with a light-like geodesic. Further, every point $s$ on $\mathcal{E}_-$ satisfies $\ip{p}{s} \geq -1$, with equality holding if and only if $s=-p$. 

Next, consider which points on $\mathcal{E}_+$ can be connected to $p$ by a geodesic. The plane $\ip{-p}{x}=1$ intersects the collared $\Hyp$-ellipse in three points, namely $-p$, $-q$, and $-r$, where now $-q$ and $-r$ are on $\mathcal{E}_+$. The plane $\ip{-p}{x}=1$ divides $\mathcal{E}_+$ into two disjoint curves, one whose points can be connected to $p$ by a space-like geodesic, and the other whose points cannot be connected to $p$ by a geodesic. In the case of the space-like geodesic, the length-minimizing arc that connects the point to $p$ will lie outside $\mathcal{E}_+$ and represents the billiard trajectory. All points $s$ on $\mathcal{E}_+$ satisfy $\ip{p}{s} \leq 1$ with equality holding if and only if $s=p$. 

Lastly, suppose $p$ is a point on the transverse $\Hyp$-ellipse. The plane $\ip{p}{x}=1$ intersects the transverse $\Hyp$-ellipse in three points $p$, $q$ and $r$. Unlike the case of the collared $\Hyp$-ellipse, these points are not necessarily distinct. If $p$ is a point on the transverse $\Hyp$-ellipse that has a light-like tangent, then exactly one of the points $q$ or $r$ coincides with $p$. The plane $\ip{p}{x}=1$ separates the transverse $\Hyp$-ellipse into two disjoint curves, one of whose points can be connected to $p$ by a time-like geodesic and the other whose points can be connected to $p$ by a space-like geodesic. In the case of the space-like geodesic, the length-minimizing arc of the geodesic represents the billiard trajectory and lies entirely inside the transverse $\Hyp$-ellipse. All points $s$ on the transverse $\Hyp$-ellipse satisfy $\ip{p}{s} > -1$.
\end{proof}

\begin{figure}[hbtp]
\begin{tabular}{c c}
a) \includegraphics[width=0.45\textwidth]{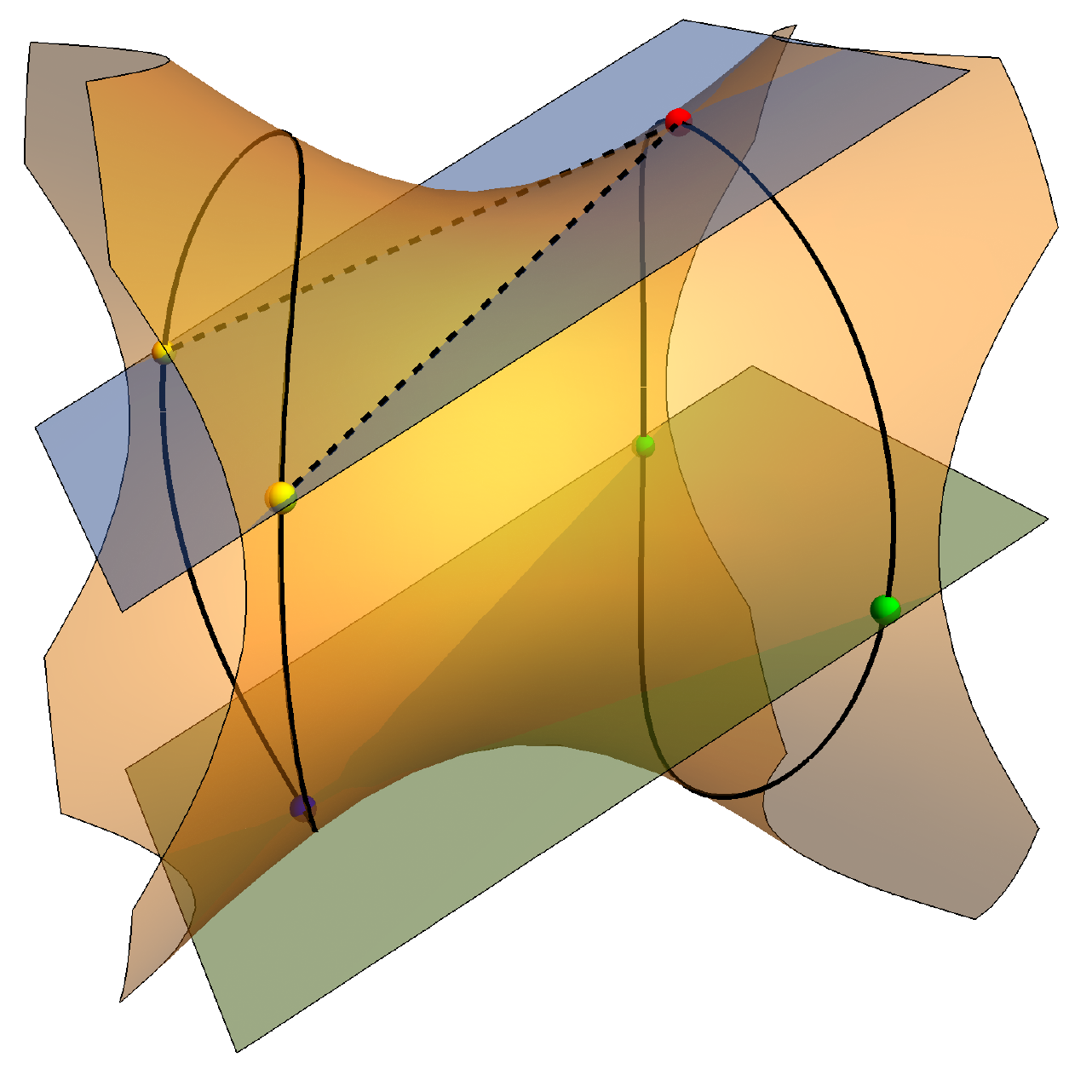} & b) \includegraphics[width=0.45\textwidth]{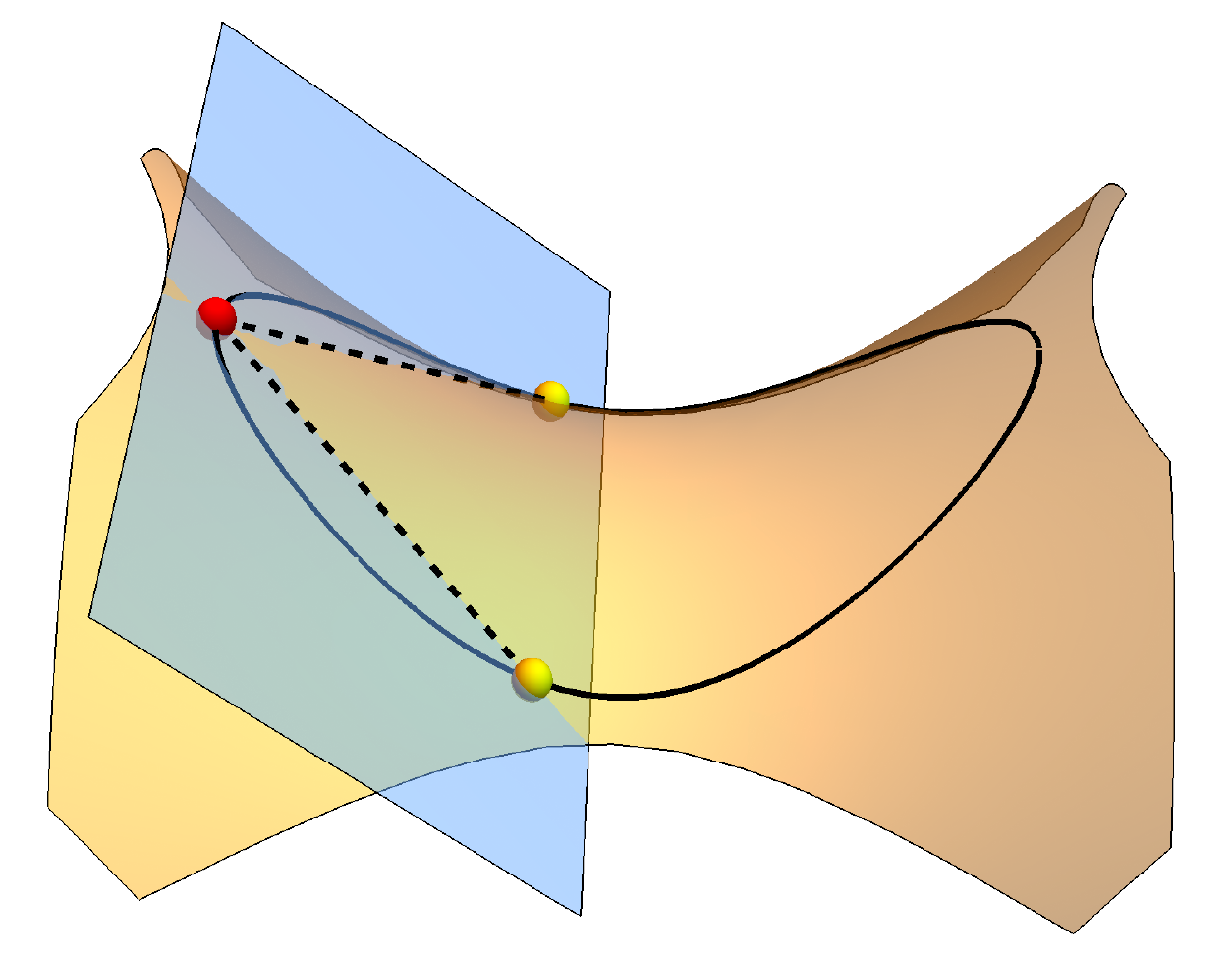} \\
\end{tabular}
\caption{The point $p$ connected to the points $q$, $r$ by light-like geodesics (dashed lines) in the plane $\ip{p}{x}=1$ in the case of the collared (a) and transverse (b) $\Hyp$-ellipse. Figure (a) also shows the plane $\ip{-p}{x}=1$ and the antipodal points $-p$ on $\mathcal{E}_-$ and $-q$, $-r$ on $\mathcal{E}_+$.}
\label{SeparatingPlanes}
\end{figure}

As noted in the above proof, the existence of odd billiard trajectories on the hyperboloid of one sheet will depend upon which restrictions are placed upon the billiard and whether the billiard is inside or outside the collared or transverse $\Hyp$-ellipse. 

\begin{cor} 
 \;
\begin{enumerate}[(i)]
    \item Suppose billiard trajectories are required to stay within the collared $\Hyp$-ellipse. Then any periodic billiard trajectory in the collared $\Hyp$-ellipse must have even period. These trajectories can be space-, light-, or time-like. 
    \item Suppose billiard trajectories are allowed outside the collared $\Hyp$-ellipse but are still restricted to motion on $\Hyp$. Then all such trajectories are space-like and reflect off of one component curve of the collared $\Hyp$-ellipse. Further, periodic trajectories can have even or odd period.  
    \item Billiard trajectories inside the transverse $\Hyp$-ellipse can be space-, light-, or time-like. Space- and time-like trajectories can have either even or odd period, while closed light-like trajectories are always even-periodic. 
\end{enumerate}
\label{DifferentBoundary2}
\end{cor}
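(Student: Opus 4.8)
The plan is to derive all three parts as consequences of Theorem~\ref{DifferentBoundary} together with the fact, established in the discussion preceding it, that a billiard reflection preserves $|v|^2$ and hence the causal type of a trajectory. Thus each bounce sequence $p_0,p_1,p_2,\dots$ consists of points joined by geodesic arcs all of the same type (space-, light-, or time-like), and periodicity of period $n$ means $p_n=p_0$ with matching directions. The only genuinely new ingredient beyond Theorem~\ref{DifferentBoundary} is a parity bookkeeping of \emph{which} boundary curve each $p_i$ lies on.

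For part~(i), I would argue that an arc staying inside the collared $\Hyp$-ellipse must join the two opposing component curves: by Theorem~\ref{DifferentBoundary}(ii) the connecting arc between two points of the \emph{same} component lies outside the collar, while Theorem~\ref{DifferentBoundary}(i) produces an interior arc exactly when the endpoints lie on opposite components. Hence consecutive bounce points alternate between $\mathcal{E}_+$ and $\mathcal{E}_-$, and a periodic orbit, which must return to its starting component, is forced to have even period; that all three types occur is immediate from Theorem~\ref{DifferentBoundary}(i). For part~(ii), the exterior of the collar on $\Hyp$ is the union of the two unbounded ``flare'' regions, each bounded by a single component curve, so a trajectory confined to one flare can only reflect off that one component. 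By Theorem~\ref{DifferentBoundary}(ii) every such segment is space-like, and type preservation forces the whole orbit to be space-like; since all bounce points now lie on one curve the alternation of part~(i) disappears and no parity constraint survives, permitting both even and odd periods.

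Part~(iii) is where the real work lies. Because the transverse $\Hyp$-ellipse is a single connected boundary curve, Theorem~\ref{DifferentBoundary}(iii) supplies an interior connecting arc between any two bounce points with no component-alternation, so space- and time-like orbits carry no parity restriction. The delicate claim is that closed light-like orbits are always even-periodic. Here I would use that a light-like geodesic on $\Hyp$ is a straight ruling, and that $\Hyp$ is doubly ruled, with two families $R_1,R_2$ and exactly one ruling of each family through every point. The reflection law $v=t+n\mapsto t-n$ is a reflection across the boundary tangent line inside the Lorentzian plane $T_p\Hyp$; it sends null vectors to null vectors, and since $T_p\Hyp$ has only two null directions, a genuine light-like bounce must send the incoming ruling into the \emph{opposite} family. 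Consequently a light-like trajectory alternates between $R_1$ and $R_2$, so returning to the initial ruling and direction requires an even number of reflections.

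The main obstacle is precisely this last step: controlling how the billiard reflection acts on the two null directions of the Lorentzian plane $T_p\Hyp$, i.e.\ verifying that a genuine light-like bounce swaps the ruling family, and checking that the degenerate cases---a grazing hit along a null boundary tangent, or a normal hit giving $v\mapsto -v$---either fail to be true reflections or produce a period-$2$ orbit, and so do not break even-periodicity. Everything else reduces to parity bookkeeping layered on top of Theorem~\ref{DifferentBoundary}, whereas the ruling-swap requires the explicit linear-algebraic analysis of reflections in a Lorentzian $2$-plane.
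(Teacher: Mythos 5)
Your proposal is correct and follows essentially the same route as the paper: the paper derives this corollary directly from the case analysis in the proof of Theorem~\ref{DifferentBoundary} (interior arcs of the collar join opposite components, forcing even period; exterior arcs are space-like and involve one component only; the transverse boundary is a single closed curve), and it justifies the even-periodicity of closed light-like orbits by exactly your ruling-swap argument --- that the generatrices of $\Hyp$ are the light-like geodesics and reflection switches between the two families --- deferring the details of that swap to the cited references just as you flag it as the one step needing explicit verification.
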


We also note that the generatrices of the hyperboloid of one sheet are all light-like. See section 4 of \cite{DR3} or \cite{KT} for details.

\section{Factorization Method of Matrix Polynomials}\label{FMMP}

In \cite{MV} a method is proposed to determine the integrability of a discrete dynamical system by reducing the problem to the factorization of matrix polynomials. In particular, this approach is applied to a discrete version of rigid body dynamics, discrete dynamics on Stiefel manifolds, and billiards inside an ellipsoid. In \cite{V} this method is used to show the integrability of billiards on the sphere and Lobachevsky space for domains bounded by confocal quadrics. We provide a summary of the technique in $\Mink$ below, noting that the statements made below extend to the Minkowski space of arbitrary finite dimension. 


%

It was shown in \cite{MV} if we start from a certain quadratic matrix polynomial $L(\lambda)$ 
$$L(\lambda) = \ell_0 + \ell_1 \lambda + \ell_2 \lambda^2$$ 
and its factorization of the form 
$$L(\lambda) = (b_0 + b_1\lambda) (c_0 + c_1 \lambda) = B(\lambda)C(\lambda),$$
then the analogous procedure
$$L(\lambda) \to L^\prime(\lambda) = C(\lambda)B(\lambda) = C(\lambda)L(\lambda)C^{-1}(\lambda)$$ corresponds to dynamics of the discrete versions of some classical integrable systems, in particular, the billiard dynamics in ellipsoids in $\R{n}$. 

Let $x$, $y$, and $z$ be the successive reflection points in the billiard on $\Hyp$ and inside the confocal family (\ref{Eqn2}):
\begin{equation}
\ip{A^{-1}x}{x} = \ip{A^{-1}y}{y} = \ip{A^{-1}z}{z} =0.
\label{InitialPoints}
\end{equation}
In particular, this means that $x$, $y$, $z$ cannot be antipodal points of each other. In the projective Klein model we have the straight lines $xy$ and $yz$, which are in one plane with the normal $N$ to the collared or transverse $\Hyp$-ellipse (\ref{KleinEqn}) and form with $N$ angles which are equal in the induced metric. See figure \ref{BillRef}.

\begin{figure}[htbp]
\includegraphics[scale=1.0]{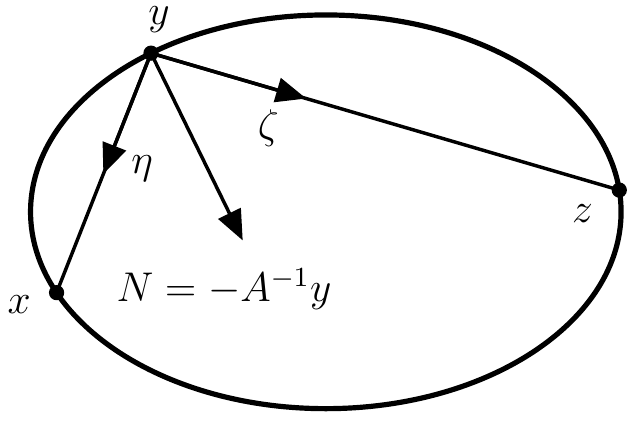}
\caption{Billiard reflection with respect to the Lorentzian metric. }
\label{BillRef}
\end{figure}

The main point of the factorization method is to find the corresponding matrix polynomial $L(\lambda)$. As is the case with the Lobachevsky space and the sphere, in our case $L(\lambda)$ is linear:
\begin{equation}
L(\lambda) = A + \lambda(x \otimes y^* - y \otimes x^*).
\label{MatPoly}
\end{equation}
where the bivector
\begin{equation*}
x\wedge y := x\otimes y^* - y \otimes x^*
\end{equation*}
is the skew-symmetric operator in $\Mink$. The norm squared of this bivector  
\begin{equation*}
\left| x \wedge y \right|^2 := \ip{x}{y}^2 - \ip{x}{x}\ip{y}{y}
\end{equation*}
is the area of the parallelogram generated by $x$ and $y$ in $\Mink$. 

After two steps the factorization procedure leads to the transformation $L(\lambda) \to L^{\prime\prime}(\lambda)$, which corresponds to the billiard dynamics $(x,y) \to (y,z)$. The arguments of \cite{V} carry over to the hyperboloid of one sheet, and the result is stated in the theorem below. 

\begin{theorem}
Let $\{x_k\}$ be an orbit in the billiard problem in the collared oor transverse $\Hyp$-ellipse domain of $\Hyp$, which in the projective representation in $\Mink$ is determined by the equation $\ip{Ax}{x}\geq 0$. Choose the vectors $x_k$ in such a way that $|x_k \wedge x_{k+1}|^2$ is constant. Then the matrix
$$L_k = A + \lambda x_{k-1} \wedge x_k $$
undergoes the isospectral transformation 
\begin{equation}
L_{k+1} = A_k L_k A_k^{-1}
\label{Thm1Eqn1}
\end{equation}
where
\begin{equation}
A_k = A- \lambda (\zeta_k \otimes x_k^* + x_k \otimes \eta_k^*).
\label{Thm1Eqn2}
\end{equation}
Here $\zeta_k$ and $\eta_k$ are tangent vectors to the trajectory at the reflection point $x_k$ as shown in figure \ref{BillRef}. 
\label{Thm1}
\end{theorem}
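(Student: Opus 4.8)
The plan is to rewrite the isospectral claim $L_{k+1} = A_k L_k A_k^{-1}$ as the intertwining identity $L_{k+1} A_k = A_k L_k$ (together with the observation that $A_k$, as a matrix polynomial, is invertible for generic $\lambda$). Since $L_k$ and $L_{k+1}$ are linear in $\lambda$ and $A_k$ is linear in $\lambda$, both sides are quadratic polynomials in $\lambda$, so the identity is equivalent to matching the coefficients of $\lambda^0$, $\lambda^1$, and $\lambda^2$. Writing $P = x_{k-1}\wedge x_k$, $Q = x_k \wedge x_{k+1}$, and $S = \zeta_k \otimes x_k^* + x_k \otimes \eta_k^*$, the $\lambda^0$ coefficient is the tautology $A^2 = A^2$, while the remaining two reduce to
$$AP - QA = SA - AS \quad (\lambda^1), \qquad SP = QS \quad (\lambda^2).$$
Everything hinges on choosing $\zeta_k,\eta_k$ correctly and then verifying these two operator identities.

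First I would pin down $\zeta_k$ and $\eta_k$. Following the factorization scheme of \cite{MV} as adapted in \cite{V}, one factors $L_k = A + \lambda P$ into operators linear in $\lambda$; the reshuffling $B(\lambda)C(\lambda)\to C(\lambda)B(\lambda)$ advances the pair $(x_{k-1},x_k)\to(x_k,x_{k+1})$, and $A_k$ arises as the product of the conjugators over the two elementary factorization steps that together realize one billiard reflection. This identifies $\zeta_k$ with the incoming and $\eta_k$ with the outgoing tangent direction of the trajectory at $x_k$, each lying in the respective central plane (so $\zeta_k$ is a combination of $x_{k-1},x_k$ and $\eta_k$ a combination of $x_k,x_{k+1}$), normalized by the reflection law---equal angles with the normal $N$ to the $\Hyp$-ellipse in the induced Lorentz metric---and by the requirement that $|x_k \wedge x_{k+1}|^2$ be constant. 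The boundary constraints $\ip{A^{-1}x_j}{x_j}=0$ from \eqref{InitialPoints} fix the remaining scalar freedom.

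With these vectors in hand, the $\lambda^2$ identity $SP = QS$ is the more mechanical one: expanding both sides with the rank-one calculus $(u\otimes v^*)(w\otimes z^*) = \ip{v}{w}\, u\otimes z^*$ turns each side into a sum of four rank-one operators, and their term-by-term equality follows from the coplanarity of $\{\zeta_k,x_{k-1},x_k\}$ and $\{\eta_k,x_k,x_{k+1}\}$, the constancy of $\ip{x_j}{x_j}$ on $\Hyp$, and the normalization of $|x_k\wedge x_{k+1}|^2$. The symmetry $\ip{Au}{v}=\ip{u}{Av}$ keeps the functionals $x_j^*$ and $A$ compatible throughout.

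The main obstacle I expect is the $\lambda^1$ identity $AP - QA = SA - AS$, since this is where the reflection law must be shown to be algebraically equivalent to the required cancellation between the $A$-twisted bivector terms and the $A$-twisted tangent terms. Establishing it amounts to verifying that the billiard law at $x_k$---in the Klein model, that the chords $x_{k-1}x_k$ and $x_kx_{k+1}$ make equal angles with $N$---produces exactly the decomposition of $A\zeta_k$ and $A\eta_k$ needed to balance the two sides. Because $A$ is diagonalizable by Proposition \ref{prop:Adiag} and $\ip{\cdot}{\cdot}$ is nondegenerate, the entire rank-one calculus and the argument of \cite{V} carry over; the only genuinely new point to check is that no positivity of the metric was used, so that the derivation remains valid in the indefinite Lorentzian setting and in the degenerate light-like configurations where some of $x_{k-1},x_k,x_{k+1}$ or their connecting directions become null.
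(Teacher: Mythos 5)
Your proposal follows essentially the same route as the paper, which does not carry out the computation either: it simply asserts that the relations \eqref{Thm1Eqn1}--\eqref{Thm1Eqn2} ``follow from the previous considerations but can be checked also by straightforward calculation,'' i.e.\ that the Moser--Veselov factorization argument of \cite{MV} as adapted in \cite{V} carries over. Your coefficient-matching setup (the $\lambda^0$, $\lambda^1$, $\lambda^2$ identities for $L_{k+1}A_k = A_k L_k$) is a correct and somewhat more explicit articulation of that ``straightforward calculation,'' and your closing observation that nothing in the rank-one calculus uses positivity of the metric is exactly the point the paper relies on implicitly.
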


The relations (\ref{Thm1Eqn1}) and (\ref{Thm1Eqn2}) follow from the previous considerations but can be checked also by straightforward calculation. 

\begin{cor}
The billiard in the collared and transverse $\Hyp$-ellipse has the following integrals $F_j:$
\begin{equation}
F_j = \dsum{i\neq j}{}{\dfrac{J_i J_j (x_i y_j - x_j y_i)^2}{a_j - a_i}} \;\;\; (j = 0, 1, 2)
\end{equation}
which satisfy the unique relation
$$F_0 + F_1 + F_2 = 0$$
and $-J_0=J_1 = J_2=1$ is given by the signature of the metric in $\Mink$.
\end{cor}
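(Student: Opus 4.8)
The plan is to realize the $F_j$ as spectral invariants of the Lax matrix (\ref{MatPoly}) and to deduce their conservation from the isospectral evolution established in Theorem \ref{Thm1}. Since (\ref{Thm1Eqn1}) is a conjugation $L_{k+1} = A_k L_k A_k^{-1}$, the characteristic polynomial $\det(L_k(\lambda) - \mu I)$ is independent of $k$ for all values of $\lambda$ and $\mu$. Consequently every coefficient of this polynomial, viewed as a function of the spectral parameters, is an integral of the billiard map, and the task reduces to computing the determinant explicitly and repackaging its coefficients into the stated form.

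First I would pass to the orthogonal coordinates of Definition \ref{ElHyp}, in which $A = \diag(a_0,a_1,a_2)$ and $\ip{\cdot}{\cdot}$ has matrix $\diag(J_0,J_1,J_2)$ with $-J_0 = J_1 = J_2 = 1$. Writing $m_{ij} := x_i y_j - x_j y_i$ and using that $y^*$ is the metric dual of $y$, the $(i,j)$ entry of the bivector $x\otimes y^* - y\otimes x^*$ is $J_j m_{ij}$; thus its matrix carries exactly the antisymmetric data $m_{ij}$ dressed by a single signature factor. Because $x\wedge y$ has rank two, the $\lambda^3$ term of the determinant drops out and a direct expansion of the $3\times 3$ determinant gives
\[
\det(L(\lambda) - \mu I) = \prod_{i}(a_i - \mu) \;-\; \lambda^{2}\sum_{i}(a_i-\mu)\,J_i\,w_i^{2},
\]
where $w_0 = m_{12}$, $w_1 = m_{02}$, $w_2 = m_{01}$ are the components of the bivector. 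Dividing by $\prod_i(a_i-\mu)$ and decomposing into partial fractions, I would verify the identity
\[
\sum_{j}\frac{F_j}{\mu - a_j} = \frac{\det(L(\lambda)-\mu I) - \prod_i(a_i-\mu)}{\lambda^{2}\,\prod_i(a_i-\mu)},
\]
whose right-hand side is $k$-independent by Theorem \ref{Thm1}. Taking the residue at $\mu = a_j$ then identifies each $F_j$ with a spectral invariant, which proves that the $F_j$ are integrals of the billiard.

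The relation $F_0 + F_1 + F_2 = 0$ I would obtain in either of two ways. Algebraically, summing over $j$ and pairing the contribution of the index $i$ in $F_j$ with that of the index $j$ in $F_i$, the antisymmetry $m_{ij} = -m_{ji}$ together with the sign change $a_j - a_i = -(a_i - a_j)$ cancels the two terms for every unordered pair $\{i,j\}$, so the total vanishes. Analytically, the same conclusion is immediate from the generating identity above: its right-hand side decays like $\mu^{-2}$ as $\mu\to\infty$, so the coefficient of $\mu^{-1}$ on the left, namely $\sum_j F_j$, must be zero. Since the determinant produces only two nontrivial $\lambda^{2}$-coefficients, those of $\mu^{0}$ and $\mu^{1}$, there are exactly two functionally independent integrals among the $F_j$, which is why $F_0 + F_1 + F_2 = 0$ is their unique relation. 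Finally, the assignment $-J_0 = J_1 = J_2 = 1$ is nothing but the signature of $\ip{\cdot}{\cdot}$ in the chosen frame, so that the products $J_i J_j$ in the numerators simply record the pseudo-Euclidean character of each pair of axes.

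The main obstacle will be the bookkeeping of these signature factors. Because the bivector $x\otimes y^* - y\otimes x^*$ is skew-symmetric with respect to the Minkowski form rather than a Euclidean one, its matrix entangles $A = \diag(a_0,a_1,a_2)$ with the metric $\diag(J_0,J_1,J_2)$, and it is precisely the interplay of these two diagonal factors that must reproduce the products $J_iJ_j$ in the numerators and the differences $a_j - a_i$ in the denominators of the $F_j$. Checking that the determinant expansion and the subsequent partial-fraction decomposition deposit the factors in exactly this pattern, in particular that the overall sign coming from $\det\diag(J_0,J_1,J_2) = -1$ is accounted for and does not corrupt the residues, is the one computation that has to be done with care; conservation itself is then a formal consequence of the conjugation (\ref{Thm1Eqn1}).
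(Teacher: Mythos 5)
Your proposal is correct and follows essentially the same route as the paper: the paper derives the corollary from the isospectrality of Theorem \ref{Thm1} together with the factorization $\det(L-\mu I)=\det(A-\mu I)\bigl(1-\lambda^{2}\phi_{\mu}(x,y)\bigr)$ and the partial-fraction expansion $\phi_{\mu}=\sum_i F_i/(a_i-\mu)$, which is exactly your determinant expansion and residue extraction in slightly different packaging. Your explicit computation of the bivector entries, the verification that the $\lambda^3$ term vanishes, and the two derivations of $F_0+F_1+F_2=0$ (pairwise cancellation and the $\mu^{-2}$ decay of the generating function) are all consistent with the paper's formulas (\ref{spec1})--(\ref{phieqn}).
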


The corollary follows from Theorem \ref{Thm1} and the formula

\begin{equation} 
\det(L - \mu I) = \det(A - \mu I)(1-\lambda^2 \phi_\mu(x,y)),
\label{spec1}
\end{equation}
where 
\begin{equation}
\begin{split}
\phi_\mu(x,y) &= \ip{(A-\mu I)^{-1}x}{y}^2 - \ip{(A-\mu I)^{-1}x}{x}\ip{(A-\mu I)^{-1}y}{y} \\
 &= \sum_{i=0}^2 \dfrac{F_i}{a_i-\mu}.
 \end{split}
\label{phieqn}
\end{equation}

One can show that these integrals are in involution with respect to the natural symplectic structure. Therefore, this billiard problem is integrable in the sense of Liouville. 

\begin{figure}[htbp]
\begin{tabular}{c c}
a) \includegraphics[width=0.50\textwidth]{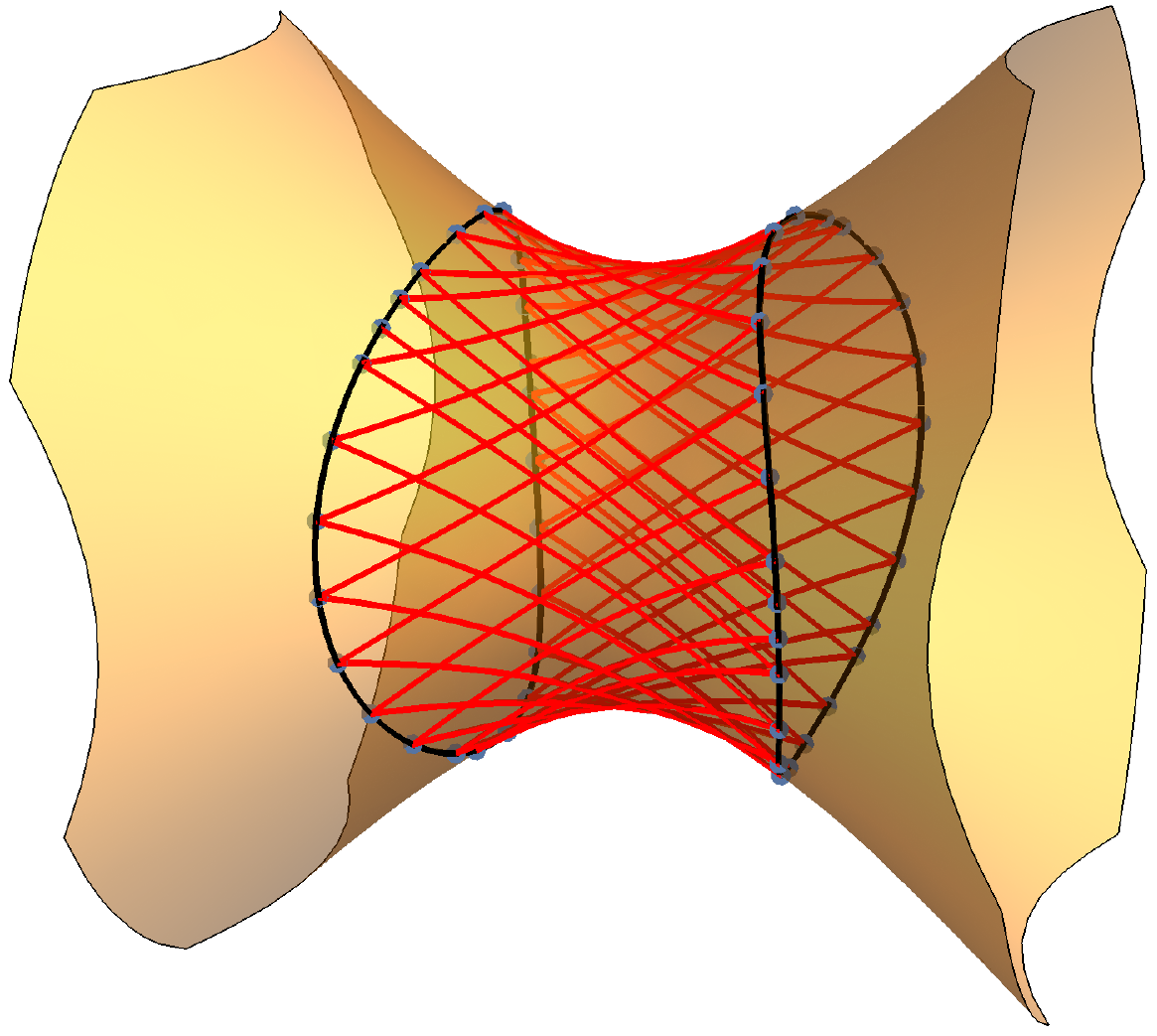} & b) \includegraphics[width=0.40\textwidth]{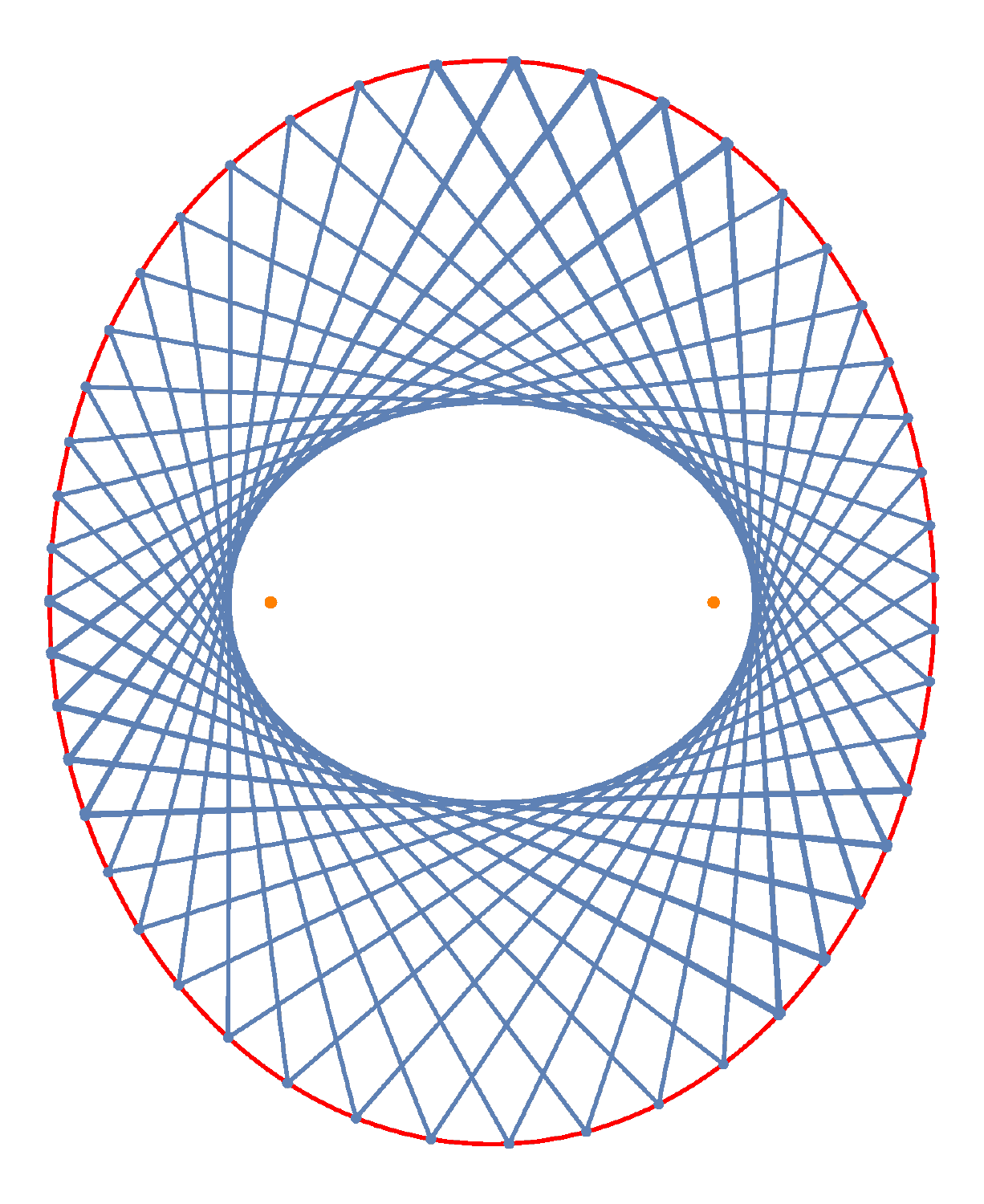} \\
c) \includegraphics[width=0.45\textwidth]{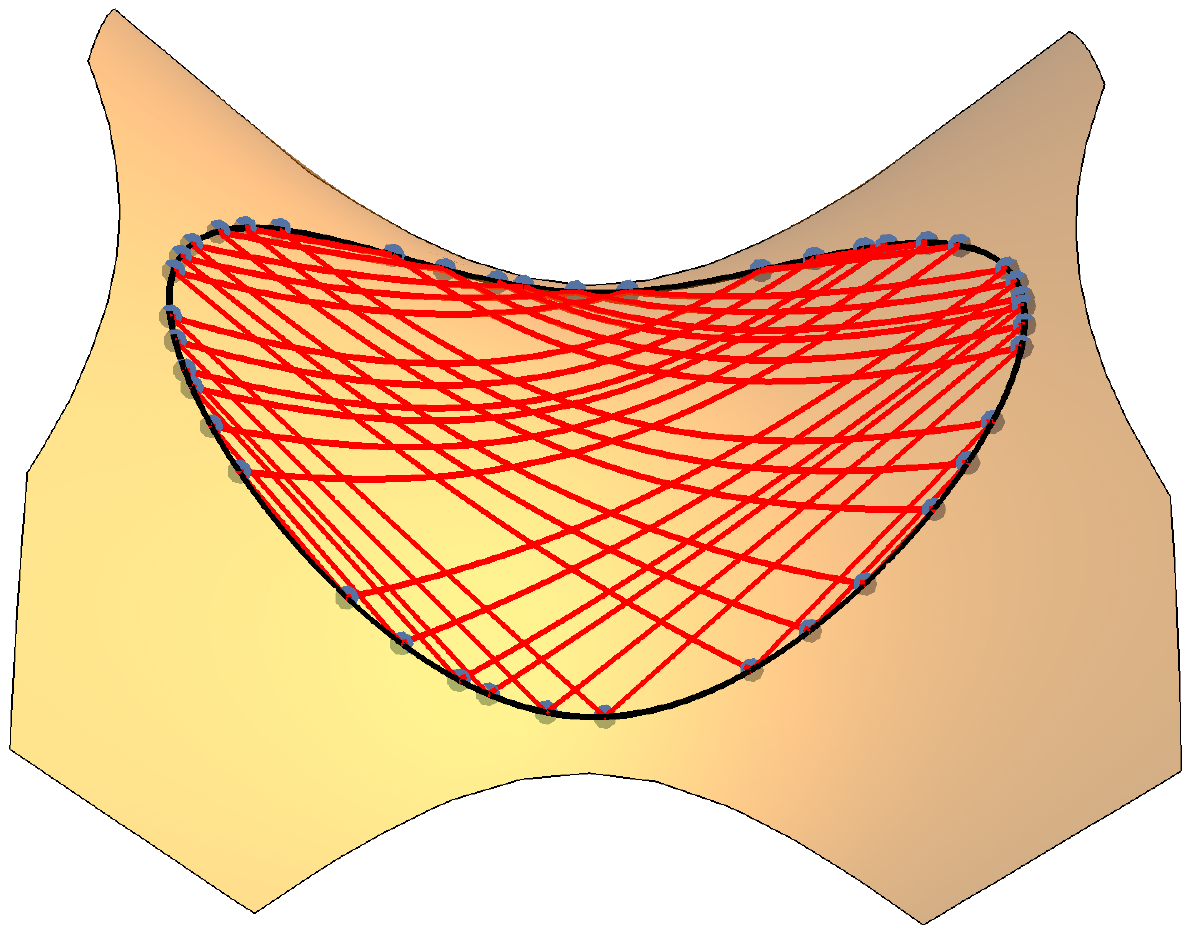} & d) \includegraphics[width=0.45\textwidth]{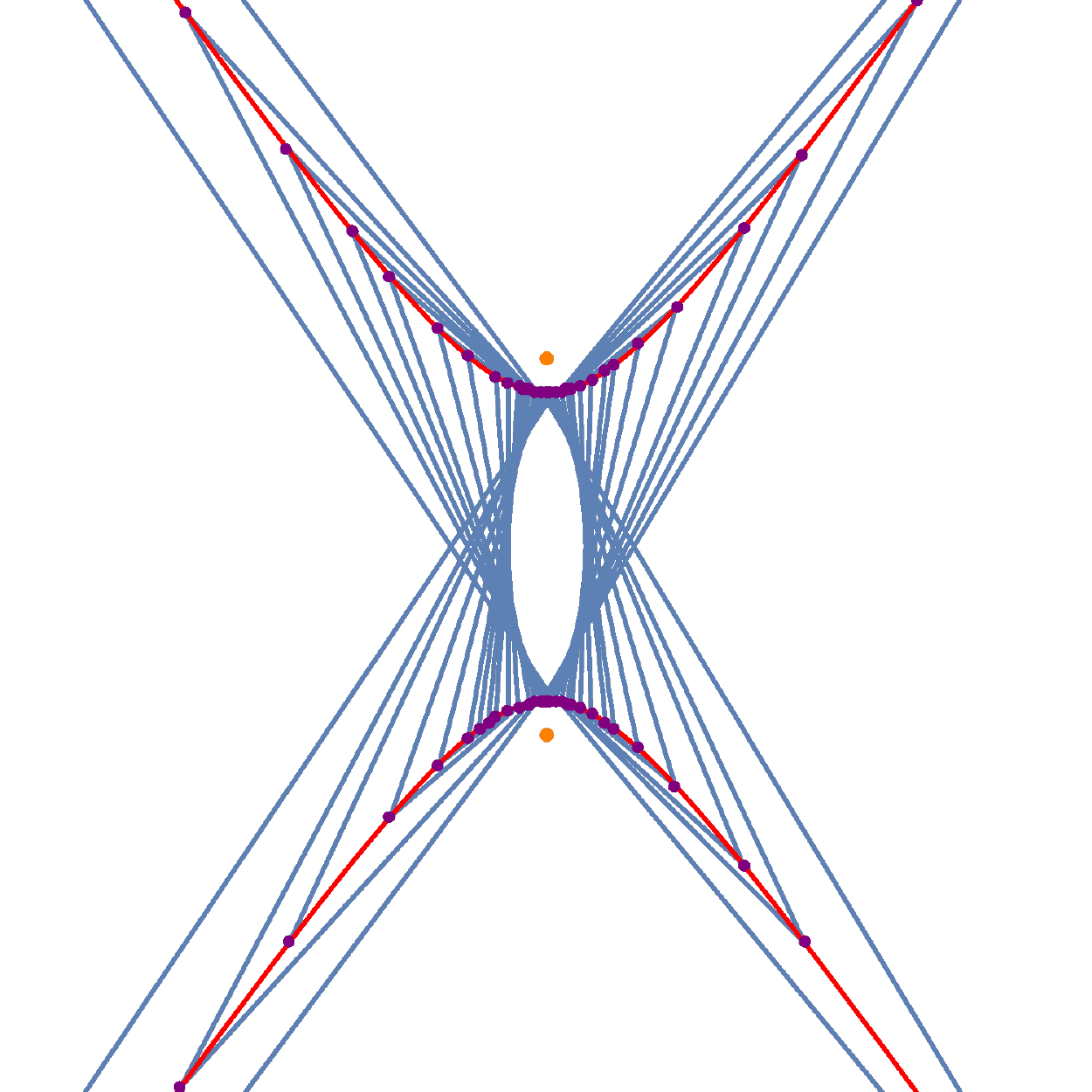} \\
\end{tabular}
\caption{Sample trajectories in the case of the collared and transverse $\Hyp$-ellipse, (a) and (c), and their projections into the Klein coordinates $\xi_1, \xi_2$, (b) and (d), respectively.}
\label{TrajEx}
\end{figure}

\begin{remark}
The matrix factorization algorithm of Veselov is blind to the discussion of geodesics from Section \ref{BonH}. At the start, the points $x$ and $y$ must satisfy equation (\ref{InitialPoints}) and be nonantipodal points on $\Hyp$. But consider two points $x$, $y$  on the same component curve $\mathcal{E}_+$ of the collared $\Hyp$-ellipse such that $x$ and $y$ cannot be connected by a geodesic on $\Hyp$ (i.e. $\ip{x}{y} \leq -1$). The algorithm produces the next collision point $z$ on $\mathcal{E}_+$ for the billiard dynamics of $(x,y) \to (y,z)$. Because the reflection law preserves the type of geodesic, the points $y$ and $z$ also cannot be connected by a geodesic on $\Hyp$ (i.e. $\ip{y}{z}\leq -1$), and hence all successive points produced from the algorithm cannot be connected by a geodesic on $\Hyp$. However, a valid trajectory whose collision points can be connected by geodesics can be recovered from this strange situation: starting with the pair $(x,-y)$ the algorithm produces the same (now valid) reflection point $z$, so the billiard dynamics are $(x,-y) \to (-y,z)$! As $\ip{x}{y} \leq -1$, it must be the case that $\ip{x}{-y} \geq 1$, and so these points can be connected by  time- or light-like geodesics. Moreover, both the invalid and valid billiard trajectories project to the same billiard in Klein coordinates because the projection $\pi_\xi$ maps antipodal points to the same point. This discussion proves the following proposition.
\end{remark}

\begin{prop}
Suppose $$(y_1,y_2) \to (y_2,y_3) \to (y_3,y_4) \to (y_4,y_5) \to \cdots$$ is a sequence of billiard reflections of the collared $\Hyp$-ellipse produced by the Veselov matrix factorization algorithm. Further suppose $\ip{y_1}{y_2}\leq -1$ so that the initial points (and hence all successive points) cannot be connected by geodesics on $\Hyp$. Then the sequence $$(y_1,-y_2) \to (-y_2,y_3) \to (y_3,-y_4) \to (-y_4,y_5) \to \cdots$$ is a sequence of billiard reflections of the collared $\Hyp$-ellipse produced by the matrix factorization algorithm, all of whose points can be connected by time- or light-like geodesics on $\Hyp$. Moreover, both billiard sequences project to the same trajectory in Klein coordinates. 
\label{InvalidValidTraj}
\end{prop}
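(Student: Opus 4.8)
The plan is to reduce the whole statement to one algebraic observation about the Lax matrix (\ref{MatPoly}) together with the behaviour of the Veselov--Moser factorization under the substitution $\lambda \mapsto -\lambda$. The bivector $x \wedge y$ is bilinear and antisymmetric, so $x \wedge (-y) = -(x \wedge y)$; hence negating one point of a pair sends the associated Lax matrix $A + \lambda\, x\wedge y$ to its value at $-\lambda$. First I would handle the sign bookkeeping for the entire orbit at once. Setting $z_k = (-1)^{k+1} y_k$, so that $z_1 = y_1$, $z_2 = -y_2$, $z_3 = y_3$, and so on, bilinearity gives $z_k \wedge z_{k+1} = (-1)^{k+1}(-1)^{k+2}\, y_k\wedge y_{k+1} = -\,y_k \wedge y_{k+1}$ for \emph{every} $k$. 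Thus the Lax matrix of the flipped sequence at each step equals $A + \lambda\, z_k\wedge z_{k+1} = L_k(-\lambda)$, where $L_k(\lambda) = A + \lambda\, y_k\wedge y_{k+1}$ is the Lax matrix of the original orbit; this single uniform identity is what makes the alternating sign pattern manageable.

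Next I would verify that the factorization step of Section \ref{FMMP} commutes with $\lambda \mapsto -\lambda$. If $L(\lambda) = B(\lambda)C(\lambda)$ with $B(\lambda)=b_0+b_1\lambda$ and $C(\lambda)=c_0+c_1\lambda$, then $L(-\lambda) = (b_0 - b_1\lambda)(c_0 - c_1\lambda)$ is again a factorization into two matrix polynomials linear in $\lambda$, and swapping the factors produces $(c_0 - c_1\lambda)(b_0 - b_1\lambda) = L'(-\lambda)$. Hence one step of the algorithm applied to $L(-\lambda)$ yields exactly $L'(-\lambda)$. Combined with the previous paragraph, this shows that starting the algorithm from the pair $(y_1,-y_2)$, whose Lax matrix is $L_1(-\lambda)$, reproduces precisely the matrices $L_k(-\lambda)$, i.e. the sequence $(y_1,-y_2) \to (-y_2,y_3) \to (y_3,-y_4) \to \cdots$. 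Along the way I would note that these pairs are admissible inputs: they lie on the collared $\Hyp$-ellipse, and by the computation below they are nonantipodal whenever the original pairs are.

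Then I would read off the connectability and the projection. By the discussion preceding the statement, constancy of $|x_k\wedge x_{k+1}|^2$ together with $\ip{x_k}{x_k}=1$ forces $\ip{y_k}{y_{k+1}}^2$ to be constant, and the hypothesis $\ip{y_1}{y_2}\le -1$ (with the reflection law preserving the geodesic type) pins the sign, so that $\ip{y_k}{y_{k+1}}\le -1$ for all $k$. Since $\ip{z_k}{z_{k+1}} = (-1)^{2k+3}\ip{y_k}{y_{k+1}} = -\ip{y_k}{y_{k+1}} \ge 1$, Proposition \ref{GeodesicProp}(i)--(ii) shows that each consecutive pair of the flipped sequence is joined by a time-like geodesic (when the product exceeds $1$) or a light-like one (when it equals $1$). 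Finally, $\pi_\xi(x) = \pi_\xi(-x)$ gives $\pi_\xi(z_k) = \pi_\xi(y_k)$ for all $k$, so the two lifts have identical images in the Klein plane.

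The step I expect to be the main obstacle is the commutation claim for the factorization: one must be sure that the particular (and a priori non-unique) factorization selected by the algorithm is the one respecting $\lambda \mapsto -\lambda$, so that the flipped sequence is genuinely the algorithm's output and not merely an isospectral companion of it. A secondary point requiring care is the sign-constancy of $\ip{y_k}{y_{k+1}}$, namely that it stays at the negative root and does not jump to the positive one; I would anchor this in the geometry of the single component curve $\mathcal{E}_+$ from Theorem \ref{DifferentBoundary} (every $s\in\mathcal{E}_+$ satisfies $\ip{p}{s}\le 1$), exactly as indicated in the preceding remark.
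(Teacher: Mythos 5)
Your proposal is correct and follows essentially the same route as the paper, whose proof is the remark immediately preceding the proposition: negate every other point, observe that $\ip{x}{-y} = -\ip{x}{y} \geq 1$ so that Proposition \ref{GeodesicProp} yields time- or light-like connecting geodesics, and use $\pi_\xi(x)=\pi_\xi(-x)$ for the claim about the Klein projection. The one step the paper merely asserts --- that the algorithm started from $(x,-y)$ returns the same reflection point $z$ --- is the step you justify algebraically via the $\lambda\mapsto-\lambda$ symmetry of the Lax matrix $A+\lambda\, x\wedge y$ and of the factorization, a legitimate and in fact more explicit argument than the paper's appeal to the projective picture, so the ``obstacle'' you flag is a refinement of, not a gap relative to, the paper's own level of rigor.
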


Let $\mathcal{AA}$ be the \emph{alternating antipodal map} whose image is described in the proposition above. This map sends a sequence of billiard collisions to a sequence where every other point has been sent to its antipode:
$$\mathcal{AA}: \{ (y_k, y_{k+1})\}_{k \in \N} \mapsto \left\{\left( (-1)^{k+1} y_k, (-1)^k y_{k+1}\right)\right\}_{k \in \N}.$$
As discussed above, we will only need to consider this map in the case of the collared $\Hyp$-ellipse. Clearly $\mathcal{AA}$ is an involution on the space of sequences of billiard collisions. Proposition \ref{InvalidValidTraj} tells us that the map $\mathcal{AA}$ can turn an invalid sequence of billiard collisions to a valid sequence of billiard collisions whose billiard trajectories are time- or light-like. The reverse is also true, though not of interest. What is of interest are the images of space-like trajectories under this map. As the $\mathcal{AA}$ map sends trajectories which reflect off of exactly one component curve to trajectories which alternate reflecting off of each component curve (or vice-versa), space-like trajectories inside the collared $\Hyp$-ellipse will be mapped one-to-one to space-like trajectories outside the collared $\Hyp$-ellipse. Of particular interest is when such trajectories are periodic.

\begin{theorem} \;
\begin{enumerate}[(i)]
    \item Suppose $\{(y_k,y_{k+1})\}$ is a space-like $2m$-periodic billiard trajectory inside the collared $\Hyp$-ellipse. Then the image of this sequence of collisions under the alternating antipodal map $\mathcal{AA}$ is either a $2m$- or  $m$-periodic trajectory outside the collared $\Hyp$-ellipse. 
    \item If $\{(y_k,y_{k+1})\}$ is a space-like $2m$-periodic billiard trajectory outside the collared $\Hyp$-ellipse, then the image of this sequence of collisions under the map $\mathcal{AA}$ is a $2m$-periodic orbit inside the collared $\Hyp$-ellipse.
    \item If $\{(y_k,y_{k+1})\}$ is a space-like $2m+1$-periodic billiard trajectory outside the collared $\Hyp$-ellipse, then the image of two concatenated copies of this sequence of collisions under the map $\mathcal{AA}$ is a $2(2m+1)$-periodic orbit inside the collared $\Hyp$-ellipse.
\end{enumerate}
\end{theorem}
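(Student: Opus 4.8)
The plan is to reduce the entire statement to elementary bookkeeping on the underlying sequence of collision points. Writing the orbit as a point sequence $(y_k)$, the map $\mathcal{AA}$ replaces it by $z_k = (-1)^{k+1} y_k$, and I would begin by recording two structural facts. First, since $\pi_\xi$ identifies antipodes, $\pi_\xi(z_k) = \pi_\xi(y_k)$, so the original and image orbits share the same Klein polygon. Second, the antipodal map interchanges the two component curves $\mathcal{E}_+$ and $\mathcal{E}_-$; combined with the fact that an inside orbit alternates between $\mathcal{E}_+$ and $\mathcal{E}_-$ whereas an outside orbit remains on a single component curve, this shows at once that $\mathcal{AA}$ carries inside orbits to outside orbits and conversely. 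This already fixes the type of the image orbit in each of (i)--(iii).

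The key computation is to translate periodicity. The image admits a period $P$ exactly when $z_{k+P} = z_k$ for all $k$, that is, when $(-1)^P y_{k+P} = y_k$. I would split on the parity of $P$: an even $P$ requires $y_{k+P} = y_k$, while an odd $P$ requires $y_{k+P} = -y_k$. The minimality of the original period, together with the curve-alternation pattern, controls which of these can occur, since $y_{k+P} = -y_k$ can hold only when $y_{k+P}$ and $-y_k$ sit on the same component curve.

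Parts (ii) and (iii) then fall out quickly. In (ii) the image is an inside orbit, hence has even period by Corollary \ref{DifferentBoundary2}; thus $P$ is even, so $y_{k+P} = y_k$, and minimality of the original period $2m$ forces $P = 2m$. In (iii) a direct computation gives $z_{k+(2m+1)} = -z_k$, so a single copy is merely anti-periodic and never closes; passing to two concatenated copies restores $z_{k+2(2m+1)} = z_k$, and the same even-period argument shows the minimal period of the image is exactly $2(2m+1)$.

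Part (i) carries the real content and is where I expect the main difficulty. Here the image is an outside orbit, which by Corollary \ref{DifferentBoundary2} may have odd period, so the odd case $y_{k+P} = -y_k$ is genuinely in play. Applying this relation twice gives $y_{k+2P} = y_k$, hence $m \mid P$; since $P$ is odd this is possible only when $m$ is odd, and then $P = m$. In every other situation---in particular whenever $m$ is even---only even $P$ survive and minimality forces $P = 2m$. The delicate point is to prove that these are the sole alternatives and to decide when the smaller value $m$ is actually attained: this amounts to detecting the antipodal half-period symmetry $y_{k+m} = -y_k$ of the original orbit, which I would interpret through the shared Klein polygon as the distinction between that polygon being traversed once or twice, thereby separating the $m$-periodic from the $2m$-periodic image.
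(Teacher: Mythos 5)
Your proof is correct and takes essentially the same route as the paper, which states this theorem without a separate proof and relies on the preceding discussion of how $\mathcal{AA}$ swaps inside orbits (alternating between $\mathcal{E}_+$ and $\mathcal{E}_-$) with outside orbits (confined to one component). Your parity analysis of $z_{k+P}=(-1)^{P}y_{k+P}$, together with Corollary \ref{DifferentBoundary2} forcing even periods for inside orbits, supplies exactly the period arithmetic the paper leaves implicit; the only step worth making explicit is that the minimal period $P$ of the image divides $2m$ (respectively $2(2m+1)$), which combined with $m\mid P$ pins down $P=m$ in the odd case of part (i).
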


\begin{figure}[htbp]
\begin{tabular}{c c}
a) \includegraphics[width=0.45\textwidth]{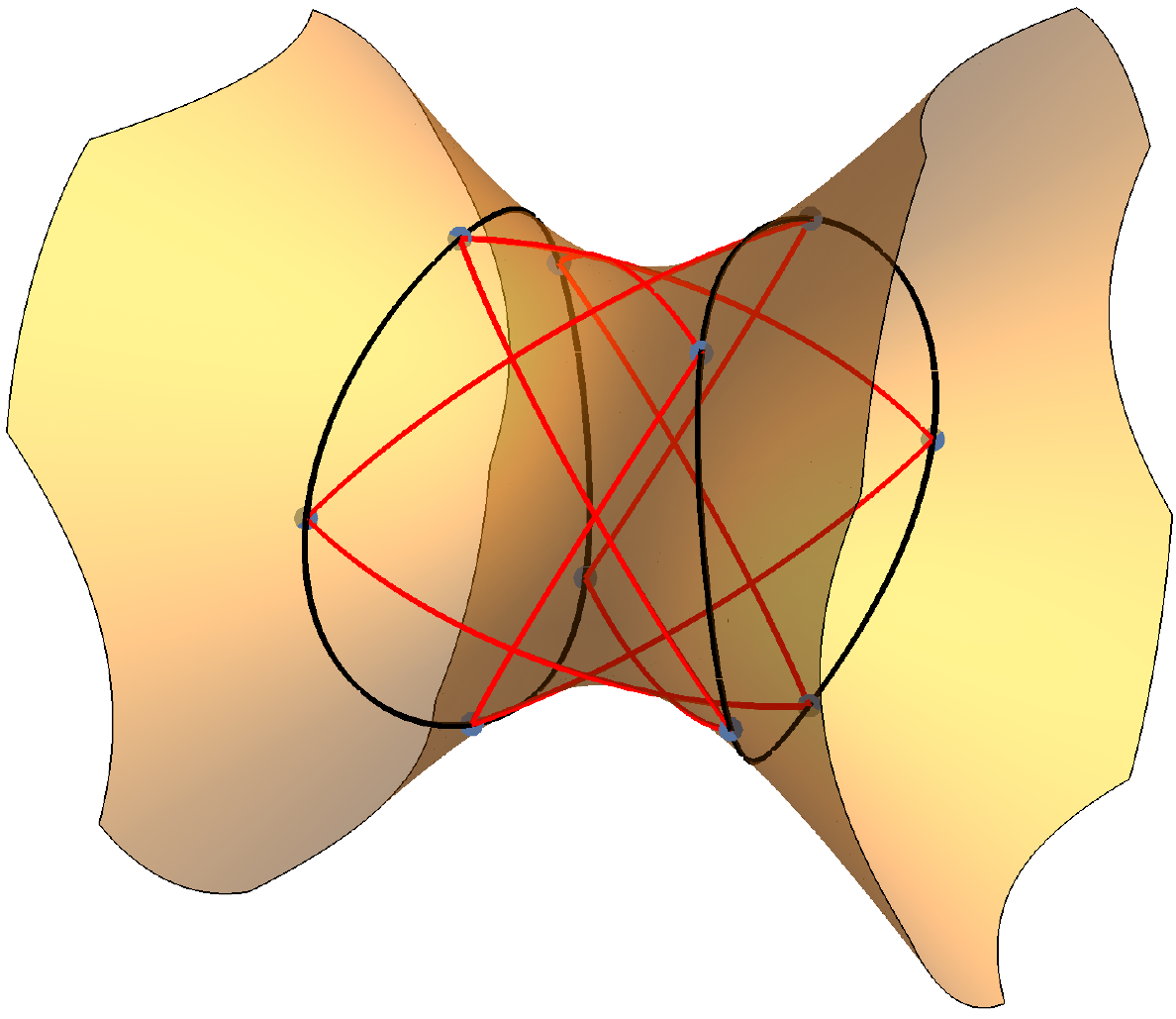} & b) \includegraphics[width=0.45\textwidth]{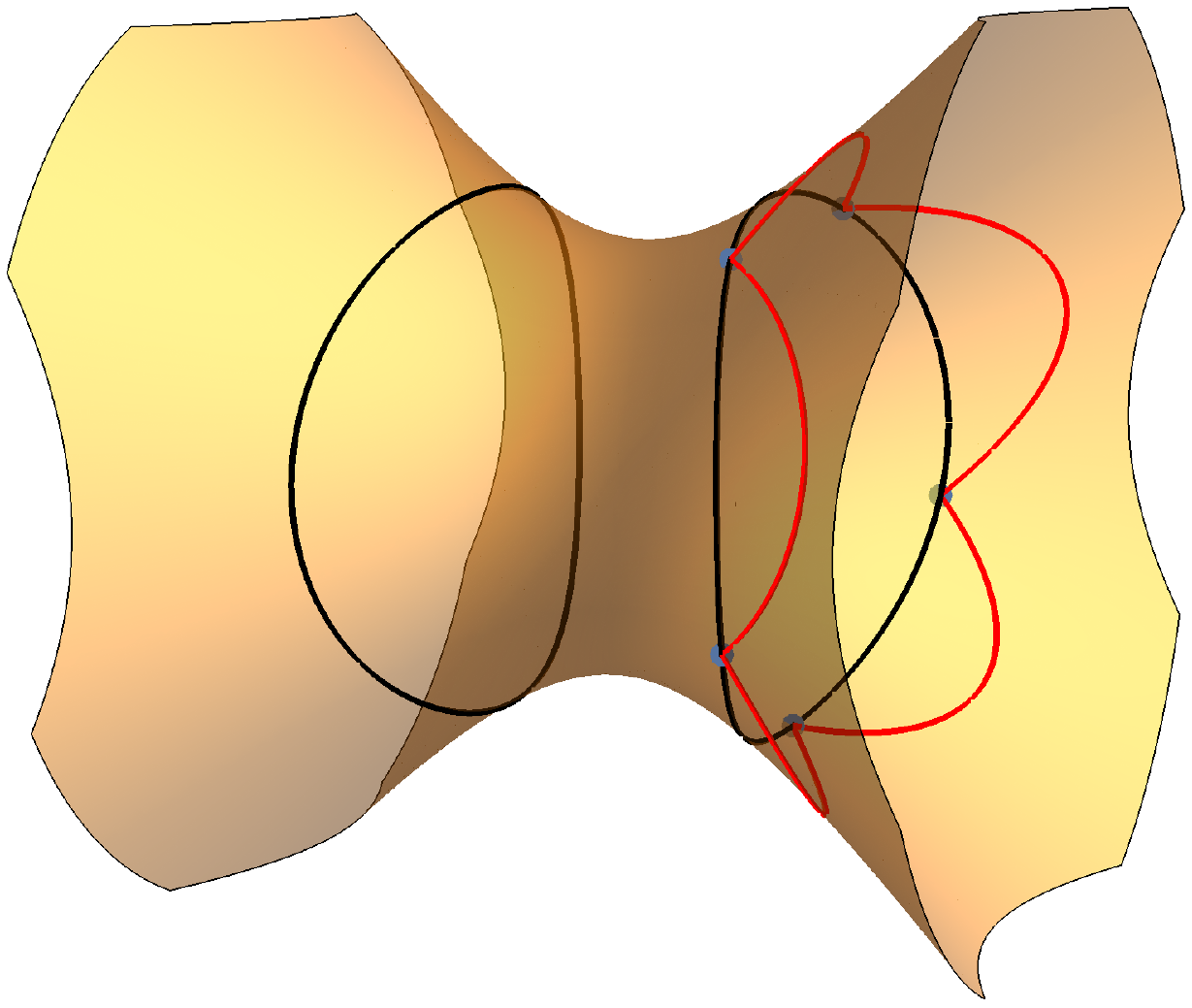} 
\end{tabular}
\caption{(a) A period 10 orbit inside the collared $\Hyp$-ellipse; (b) the image of this period 10 orbit under the $\mathcal{AA}$ map is a period 5 orbit outside the collared $\Hyp$-ellipse.}
\label{AAMapExamples}
\end{figure}

\section{Spectral Curves, Cayley's Condition, and Periodic Orbits}
\label{SpectralCurves}

Consider the spectral curve $\Gamma$ given by equation (\ref{spec1}), which can be rewritten in the following way
\begin{equation}
\label{spec2}
\Gamma:\;\det(L(\lambda) - \mu I)  = \det(A + \lambda x\wedge y - \mu I) =0.
\end{equation}
Using equations (\ref{spec1}) and (\ref{phieqn}) this can be reformulated as
\begin{equation}
\label{spec3}
\Gamma: p(\mu) - \lambda^2|x\wedge y|^2 q(\mu) =0
\end{equation} 
where 
$$\displaystyle p(\mu) = (\mu - a_0)(\mu - a_1)(\mu - a_2),\;\;\; q(\mu) = (\mu-\nu)$$
and $\nu$ is the root of equation (\ref{phieqn}), $\phi_\mu(x,y) =0$. 


\begin{prop}\label{PhiRoots}
Let $x,y \in \Hyp$. Then $\phi_\mu(x,y)=0$ has exactly one real root. This root can be written explicitly as 
\begin{align}
\nu &= \frac{a_1 a_2 F_0+a_0 a_2 F_1+a_0 a_1 F_2}{a_2 \left(F_0+F_1\right)+a_1 \left(F_0+F_2\right)+a_0 \left(F_1+F_2\right)} \nonumber\\
 &= \frac{-a_0 \left(x_1 y_2-x_2 y_1\right)^2+a_1 \left(x_0 y_2-x_2 y_0\right)^2+a_2 \left(x_0 y_1-x_1 y_0\right)^2}{-\left(x_1 y_2-x_2 y_1\right)^2+\left(x_0 y_2-x_2 y_0\right)^2+ \left(x_0 y_1-x_1 y_0\right)^2} 
 \label{LoneRootEqn}
\end{align}
where $x_i,y_j$ are the components of the points $x,y$. 
In particular, the straight line $xy$ on the hyperboloid of one sheet $\Hyp$ is tangent to the confocal conic  (\ref{ConfFam1}) corresponding to $\lambda = \nu$. This property and the equation for $\nu$ are preserved under the map $\mathcal{AA}$. 
\end{prop}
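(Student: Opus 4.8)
The plan is to dispatch the four assertions in turn, the first two being algebraic identities driven by the relation $F_0+F_1+F_2=0$ together with the explicit expressions for the integrals $F_j$, and the last two being short geometric observations.

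First I would clear denominators in $\phi_\mu(x,y)=\sum_{i=0}^2 F_i/(a_i-\mu)$, so that $\phi_\mu=0$ is equivalent to $F_0(a_1-\mu)(a_2-\mu)+F_1(a_0-\mu)(a_2-\mu)+F_2(a_0-\mu)(a_1-\mu)=0$. The coefficient of $\mu^2$ here is precisely $F_0+F_1+F_2$, which vanishes, so the left-hand side is in fact \emph{linear} in $\mu$; hence there is at most one root, and exactly one as soon as the linear coefficient does not vanish. Reading off the two coefficients gives the first displayed formula for $\nu$ in (\ref{LoneRootEqn}). I would then simplify its denominator to $-(a_0F_0+a_1F_1+a_2F_2)$ and identify it with $|x\wedge y|^2$; this both pins down the non-degeneracy condition (the chord $xy$ must not be light-like) guaranteeing a genuine unique root, and recovers the factor $|x\wedge y|^2$ appearing in the spectral curve (\ref{spec3}).

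For the Plücker form, I would substitute $F_j=\sum_{i\neq j}J_iJ_j(x_iy_j-x_jy_i)^2/(a_j-a_i)$ with $-J_0=J_1=J_2=1$ into both numerator and denominator of the first formula and collect the three squared minors $(x_iy_j-x_jy_i)^2$. Each pairs up so that the spurious factors $(a_i-a_j)$ cancel, collapsing the numerator to $-a_0(x_1y_2-x_2y_1)^2+a_1(x_0y_2-x_2y_0)^2+a_2(x_0y_1-x_1y_0)^2$ and the denominator to $-(x_1y_2-x_2y_1)^2+(x_0y_2-x_2y_0)^2+(x_0y_1-x_1y_0)^2$, which is the second line of (\ref{LoneRootEqn}). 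This step is routine but the bookkeeping is the one place where real care is needed, so I regard it as the main obstacle.

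The tangency claim I would prove directly: parametrize the projective line through $x$ and $y$ as $z=sx+ty$ and restrict the quadratic form $Q_\mu(z)=\ip{(A-\mu I)^{-1}z}{z}$ to it, obtaining the binary quadratic $s^2\ip{(A-\mu I)^{-1}x}{x}+2st\ip{(A-\mu I)^{-1}x}{y}+t^2\ip{(A-\mu I)^{-1}y}{y}$. Its discriminant is exactly $4\phi_\mu(x,y)$, so the line is tangent to the cone $Q_\mu=0$ if and only if $\mu=\nu$; since this cone projects under $\pi_\xi$ to the confocal conic $\mathcal{C}_\nu$ of (\ref{ConfFam1}) and tangency is projectively invariant, the chord $xy$ is tangent to $\mathcal{C}_\nu$. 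Finally, both $\nu$ and the tangency depend on $x,y$ only through the squared minors $(x_iy_j-x_jy_i)^2$, which are unchanged under $x\mapsto-x$ or $y\mapsto-y$; hence $\mathcal{AA}$ fixes $\nu$. Geometrically this is immediate from $\pi_\xi(x)=\pi_\xi(-x)$, so that $\mathcal{AA}$ moves neither the chord in Klein coordinates nor the conic it touches.
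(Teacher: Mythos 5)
Your argument is correct and is precisely the computation the paper leaves implicit: the paper states Proposition \ref{PhiRoots} without proof, and its surrounding setup (the relation $F_0+F_1+F_2=0$, the linear factor $q(\mu)=\mu-\nu$ in the spectral curve \eqref{spec3}, and the discriminant characterization of tangency) presupposes exactly the steps you carry out. Your identification of the denominator with $|x\wedge y|^2$, and hence of the light-like chords $\ip{x}{y}=\pm1$ as the only degenerate case (where the caustic escapes to $\nu=\infty$, consistent with Theorem \ref{LightCayleyThm}), is a correct and worthwhile refinement of the statement as given.
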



Knowing the degree of $\phi_\mu$ now allows us to prove the theorem below, an analogue of a well-known theorem in Euclidean and pseudo-Euclidean geometry. 

\begin{theorem}
All segments of the billiard trajectory in the collared and transverse $\Hyp$-ellipse are tangent to the same confocal conic corresponding to $\lambda = \nu$.
This \emph{caustic} is fixed for a given trajectory and is invariant under the map $\mathcal{AA}$.
\label{FixedCaustic}
\end{theorem}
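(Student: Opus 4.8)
The plan is to reduce the assertion to the single-chord tangency of Proposition \ref{PhiRoots} combined with the isospectrality of Theorem \ref{Thm1}. By Proposition \ref{PhiRoots}, for each pair of consecutive reflection points $x_k,x_{k+1}$ the line $x_kx_{k+1}$ is tangent to exactly one member of the confocal family (\ref{ConfFam1}), namely the one whose parameter equals the unique real root $\nu_k$ of $\phi_\mu(x_k,x_{k+1})=0$. Thus the entire content of the theorem is that $\nu_k$ does not depend on $k$ and is unchanged under $\mathcal{AA}$.

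First I would prove that $\nu_k$ is constant along the orbit. Theorem \ref{Thm1} asserts that $L_k=A+\lambda\, x_{k-1}\wedge x_k$ evolves by the isospectral conjugation $L_{k+1}=A_kL_kA_k^{-1}$, so the characteristic polynomial $\det(L_k(\lambda)-\mu I)$ is the same for every $k$. Substituting the factorization (\ref{spec1}) and cancelling the common nonzero factor $\det(A-\mu I)$ shows that the rational function $\mu\mapsto\phi_\mu(x_{k-1},x_k)$ is itself independent of $k$; equivalently, by (\ref{phieqn}), the integrals $F_0,F_1,F_2$ are conserved, so $\phi_\mu=\sum_i F_i/(a_i-\mu)$ is one and the same function of $\mu$ on every segment. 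Because Proposition \ref{PhiRoots} guarantees this function has a \emph{unique} real root, that root $\nu=\nu_k$ must coincide for all $k$, which is the fixed caustic. I note that the argument is uniform in the collared and transverse cases, since both Theorem \ref{Thm1} and (\ref{spec1}) hold without distinction, and that the tangency is a statement about the lines and conics in the Klein model, hence unaffected by whether a given chord is realized by a genuine geodesic on $\Hyp$.

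For invariance under $\mathcal{AA}$ I would appeal to the explicit formula (\ref{LoneRootEqn}). The map $\mathcal{AA}$ negates exactly one endpoint of each chord, so every bracket $x_iy_j-x_jy_i$, i.e.\ every component of the bivector $x\wedge y$, changes sign. Since $\nu$ depends on these quantities only through their squares, in both the numerator and denominator of (\ref{LoneRootEqn}), its value is preserved; equivalently, $\phi_\mu(x,y)$ is even under $x\mapsto-x$ and under $y\mapsto-y$ separately, so the whole function, and with it its unique root, is unchanged. This is exactly the invariance recorded at the end of Proposition \ref{PhiRoots}, and it carries the fixed-caustic conclusion from a trajectory to its $\mathcal{AA}$-image.

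The step I expect to require the most care is the passage from ``the function $\phi_\mu$ is the same on every segment'' to ``the caustic parameter $\nu$ is the same on every segment.'' Constancy of $\phi_\mu$ alone only fixes its set of roots; the conclusion genuinely uses the uniqueness of the real root established in Proposition \ref{PhiRoots}, which rules out the a priori possibility that $\nu_k$ jumps among several admissible roots from one reflection to the next. In effect the analytic work has been front-loaded into Proposition \ref{PhiRoots} and the isospectral Theorem \ref{Thm1}, and what remains is to assemble these two facts and to verify the parity computation governing $\mathcal{AA}$.
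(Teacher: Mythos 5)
Your proposal is correct and follows essentially the same route as the paper, which defers to Veselov's Theorem 3 adapted via Propositions \ref{NumRoots} and \ref{PhiRoots}: the isospectrality of the Lax pair in Theorem \ref{Thm1} together with (\ref{spec1}) forces $\phi_\mu$ (equivalently the integrals $F_i$) to be the same on every segment, the uniqueness of its real root pins down $\nu$, and the evenness of (\ref{LoneRootEqn}) in the brackets $x_iy_j-x_jy_i$ gives the $\mathcal{AA}$-invariance already recorded in Proposition \ref{PhiRoots}. No gaps; your expanded argument is a faithful rendering of the proof the paper cites.
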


The proof of this is similar to that of Theorem 3 in \cite{V}, though with the appropriate adjustments due to Propositions \ref{NumRoots} and \ref{PhiRoots}. 

The work of \cite{V, MV, DJR} and others describe how to use the factorization procedure outlined in the previous section to compute eigenvectors of $\Gamma$ along with other spectral properties. We provide a brief summary below. 

Let $P_\beta$ be a point of $\Gamma$ with $\mu = \beta$ and let $P_{\pm}$ be the ``infinities" $\mu \approx \pm \lambda |x\wedge y|$ for $\lambda \to \infty$. 
 The eigenvector $\psi$ of $L(\lambda)$ normalized by the condition $\psi^0 + \psi^1 + \psi^2=1$ is the meromorphic vector function on $\Gamma$ with pole-divisor $\mathcal{D}$ of degree $3+g-1 = 3$, where $g= 1$ is the genus of $\Gamma$. By Theorem \ref{Thm1Eqn1} we can express the eigenvector $\psi_{k+1}$ of $L_{k+1}$ can be written in terms of $\psi_k$ by 
\begin{equation}
\label{EigenvectorEqn}
\psi_{k+1} = A_k \psi_k = (A- \lambda(\zeta_k \otimes x_k^* + x_k \otimes \eta_k^*))\psi_k.
\end{equation}

Thus $\psi_{k+1}$ has two new poles $P_\pm$ and a new double zero at the point $Q_+$ corresponding to $\mu=0, \lambda = \ip{x}{A^{-1}y}^{-1} = \ip{x_{k-1}}{A^{-1}x_k}^{-1}$. This means the pole-divisor $\mathcal{D}_{k+1}$ of $\psi_{k+1}$ can be written as
\begin{equation}
\label{divisoreqn}
\mathcal{D}_{k+1} = \mathcal{D}_k + \mathcal{U} 
\end{equation}
where $\mathcal{U}=P_+ + P_- - 2Q_+ = Q_- - Q_+$, where $Q_-$ has coordinates $\mu=0$, $\lambda = -\ip{x}{A^{-1}y}^{-1}$. The equivalence $P_+ + P_- = Q_+ - Q_-$ is given by the function $f(\mu, \lambda) = \mu$. This shift in the divisor $\mathcal{D}_k$ on $\Gamma$ corresponds to the points of reflection from the boundary in our billiard system. This is in fact Theorem 2 of \cite{V}, that the dynamics of the collared and transverse $\Hyp$-ellipse billiard problem correspond to the shift (\ref{divisoreqn}) on the Jacobi variety of the elliptic curve (\ref{spec2}).

Given a periodic billiard trajectory in the collared or transverse $\Hyp$-ellipse, it is known that trajectories with the same caustics have the same spectral curve. Thus the trajectory is of period $m$ if and only if $m(Q_- - Q_+) =0$  on the Jacobi variety $Jac(\Gamma)$. This proves the existence of a Poncelet-like result in this setting.

\begin{prop}
Given a periodic billiard trajectory in the collared or transverse $\Hyp$-ellipse, any billiard trajectory which shares the same caustic is also periodic with the same period. 
\end{prop}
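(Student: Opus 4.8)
The plan is to deduce the closure statement directly from the translation-on-the-Jacobian description of the dynamics established just above, reducing it to the single fact that the shift vector $\mathcal{U}$ depends on the caustic alone and not on the individual trajectory.

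First I would recall that along any trajectory the pole-divisor evolves by the rigid translation $\mathcal{D}_{k+1} = \mathcal{D}_k + \mathcal{U}$ on $Jac(\Gamma)$, where $\mathcal{U} = Q_- - Q_+$ and $Q_\pm$ are the two points of $\Gamma$ lying over $\mu = 0$, interchanged by the elliptic involution $\lambda \mapsto -\lambda$. Iterating gives $\mathcal{D}_{k+m} = \mathcal{D}_k + m\mathcal{U}$, so a trajectory is $m$-periodic exactly when $m\mathcal{U} = 0$ in $Jac(\Gamma)$. The essential feature is that this is a condition on the single group element $\mathcal{U}$, with no reference to the index $k$ or to the initial divisor $\mathcal{D}_0$; it is therefore insensitive to where the trajectory begins.

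The crux is to show that $\mathcal{U}$ is an invariant of the caustic. By Theorem \ref{FixedCaustic} the caustic of a trajectory is the confocal conic with parameter $\lambda = \nu$, so two trajectories sharing this caustic share the same $\nu$. Writing the spectral curve (\ref{spec3}) as $p(\mu) - \lambda^2 |x\wedge y|^2 (\mu - \nu) = 0$ and rescaling $\lambda \mapsto \lambda/|x\wedge y|$ exhibits $\Gamma$, up to isomorphism, as depending only on the fixed parameters $a_0, a_1, a_2$ and on $\nu$; in the rescaled variable the two points over $\mu = 0$ sit at $\pm\sqrt{a_0 a_1 a_2/\nu}$, which are fixed by the caustic. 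Hence $Q_\pm$, and with them $\mathcal{U} = Q_- - Q_+$, are the same element of $Jac(\Gamma)$ for every trajectory with that caustic. Combining this with the previous paragraph: if one such trajectory is $m$-periodic then $m\mathcal{U} = 0$, and since any trajectory sharing this caustic carries the identical $\mathcal{U}$ on the identical $\Gamma$, it too satisfies $m\mathcal{U} = 0$ and closes after exactly $m$ steps. I would close by noting that $\nu$ is preserved by the alternating antipodal map $\mathcal{AA}$ (Proposition \ref{PhiRoots}), so $\Gamma$ and $\mathcal{U}$ are $\mathcal{AA}$-invariant and the conclusion applies uniformly to the paired trajectories inside and outside the collared $\Hyp$-ellipse.

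The main obstacle I anticipate is the rigorous verification that the $\lambda$-rescaling is a genuine isomorphism of the elliptic curves carrying the marked points over $\mu = 0$ to one another, and that the normalization that $|x_k \wedge x_{k+1}|^2$ is constant from Theorem \ref{Thm1} is compatible with it, so that $\mathcal{U}$ is literally the same point of $Jac(\Gamma)$ across trajectories rather than merely a shift of the same additive order. Everything else is a direct synthesis of the divisor-shift formula and the period criterion already recorded above.
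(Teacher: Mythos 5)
Your argument is essentially the paper's own: the authors likewise observe that trajectories sharing a caustic share the same spectral curve $\Gamma$, that the dynamics is the translation by $\mathcal{U}=Q_--Q_+$ on $Jac(\Gamma)$, and that periodicity is the condition $m\mathcal{U}=0$, which depends only on $\Gamma$ and its marked points and not on the initial divisor. Your additional verification that $\Gamma$ and $Q_\pm$ are determined by $a_0,a_1,a_2$ and $\nu$ alone (via the rescaling $\lambda\mapsto\lambda|x\wedge y|$) correctly fills in the step the paper states without proof, so the proposal is sound and follows the same route.
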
 

The work of Cayley (see \cite{C1,C2}, amongst many others) in the $19^{th}$ century and Griffiths and Harris \cite{GH} in the 1970's on the Poncelet Theorem lead to analytic conditions relating the period of a billiard trajectory to its caustics. Dragovi\'c and Radnovi\'c have proved such conditions for a Poncelet theorem the ellipsoid in $\R{d}$ \cite{DR1,DR2} and in Lobachevsky space \cite{DJR}. In particular, the techniques used apply here.

\begin{lemma}[\cite{DJR}]
Suppose the hyperelliptic curve $\Gamma$ is of the form $$\Gamma:\;\; y^2 = (x-x_1)\cdots(x-x_{2g+2})$$ with distinct nonzero $x_i$, $g$ is the genus of $\Gamma$, and $Q_+$ and $Q_-$ represent two points on $\Gamma$ over the point $x=0$. Then $m(Q_+-Q_-)=0$ is equivalent to 
$$\text{rank} \fourbyfour{B_{g+2}}{B_{g+3}}{B_{m+1}}{B_{g+3}}{B_{g+4}}{B_{m+2}}{B_{g+m}}{B_{g+m+1}}{B_{2m-1}}<m-g$$ for $m>g$ where $y=\sqrt{(x-x_1)\cdots(x-x_{2g+2})} = B_0 + B_1 x + B_2 x^2 + \cdots$ is the Taylor expansion around the point $Q_-$. 
\label{ImportantLemma}
\end{lemma}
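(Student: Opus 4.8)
The plan is to translate the Jacobian relation $m(Q_+ - Q_-) = 0$ into the existence of a suitable meromorphic function on $\Gamma$ and then convert the construction of that function into a linear condition on the coefficients $B_j$. By Abel's theorem, $m(Q_+ - Q_-) = 0$ on $\operatorname{Jac}(\Gamma)$ holds if and only if the divisor $mQ_+ - mQ_-$ is principal, i.e.\ there is a nonconstant meromorphic function $f$ on $\Gamma$ with $\operatorname{div}(f) = mQ_+ - mQ_-$. Since $x=0$ is not a branch point (the $x_i$ are nonzero), $x$ is a local coordinate at both $Q_+$ and $Q_-$, and by the hyperelliptic involution $y = \sum_{j\ge 0} B_j x^j$ near $Q_-$ while $y = -\sum_{j\ge 0} B_j x^j$ near $Q_+$. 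Equivalently, I want a nonconstant $f$ in the Riemann--Roch space $L(mQ_-)$ vanishing to order $\ge m$ at $Q_+$; since such an $f$ has at most an order-$m$ pole at $Q_-$ and at least an order-$m$ zero at $Q_+$, the degree-zero balance of $\operatorname{div}(f)$ forces $\operatorname{div}(f) = mQ_+ - mQ_-$ exactly.

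First I would build an explicit basis of $L(mQ_-)$. For $j \ge g+1$ set
\[
f_j = \frac{y + B_0 + B_1 x + \cdots + B_{j-1}x^{j-1}}{x^j}.
\]
Using the two local expansions of $y$, near $Q_-$ the numerator is $2B_0 + O(x)$, so $f_j$ has a pole of order exactly $j$ there, while near $Q_+$ the numerator equals $-(B_j x^j + B_{j+1}x^{j+1}+\cdots)$, so $f_j$ is regular at $Q_+$. The one delicate verification is regularity at the two points over $x=\infty$: since $y \sim \pm x^{g+1}$ there, $f_j$ is regular at infinity precisely when $j \ge g+1$, which is exactly the Weierstrass gap phenomenon at the non-Weierstrass point $Q_-$. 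Together with the constant function, $\{1, f_{g+1}, \ldots, f_m\}$ consists of functions with distinct pole orders $0, g+1, \ldots, m$ at $Q_-$, hence is linearly independent, and its cardinality $m-g+1$ equals $\dim L(mQ_-)$ by Riemann--Roch, so it is a basis.

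Next I would impose the vanishing condition. Writing a general element $f = c_0 + \sum_{j=g+1}^{m} c_j f_j$ and using $f_j = -\sum_{i\ge 0} B_{j+i}x^i$ near $Q_+$, the coefficient of $x^i$ in the local expansion of $f$ at $Q_+$ is $c_0 - \sum_j c_j B_j$ for $i=0$ and $-\sum_{j=g+1}^{m} c_j B_{i+j}$ for $i \ge 1$. Requiring $f$ to vanish to order $\ge m$ at $Q_+$ means the coefficients of $x^0,\ldots,x^{m-1}$ all vanish: the $i=0$ equation merely fixes $c_0$, while for $i = 1, \ldots, m-1$ we obtain the homogeneous system $\sum_{j=g+1}^{m} c_j B_{i+j} = 0$ in the $m-g$ unknowns $c_{g+1},\ldots,c_m$. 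A nonconstant such $f$ exists iff this system has a nontrivial solution, i.e.\ iff its $(m-1)\times(m-g)$ coefficient matrix has rank $< m-g$. Finally I would check that the $(i,j)$-entry $B_{i+j}$, reindexed by $r=i$ and $c = j-g$, equals $B_{g+r+c}$, which is precisely the displayed matrix (top-left $B_{g+2}$, bottom-right $B_{2m-1}$), completing the equivalence.

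The main obstacle is bookkeeping rather than conceptual: pinning down the exact index ranges so that the basis functions $f_j$ start at $j=g+1$ (the regularity-at-infinity check) and the vanishing conditions run over $i = 1, \ldots, m-1$, so that the resulting coefficient matrix matches the stated one entry-for-entry. The underlying facts---Abel's theorem, the Riemann--Roch dimension count, and the Weierstrass gap structure at a non-branch point---are standard, and the construction of the functions $f_j$ is the mechanism that converts ``principal divisor'' into a concrete Cayley-type determinantal condition.
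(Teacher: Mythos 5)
Your argument is correct and is essentially the standard proof of this Cayley-type criterion: the paper itself states the lemma without proof (citing \cite{DJR}), and both that source and the paper's own treatment of the odd-period case in Theorem \ref{CayleyThm} use exactly your mechanism of building a basis of $L(mQ_-)$ from the functions $f_j$ and converting the order-$m$ vanishing at $Q_+$ into the displayed $(m-1)\times(m-g)$ rank condition. Your attention to the two genuinely delicate points --- regularity of $f_j$ at infinity forcing $j\ge g+1$, and the dimension count $\dim L(mQ_-)=m-g+1$ via the gap sequence at the non-Weierstrass point $Q_-$ --- is exactly what makes the bookkeeping close up.
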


Introduce the variable change $X =\mu$, $Y = \lambda|x\wedge y| q(X)$. This transforms equation (\ref{spec3}) into 
\begin{equation}
\label{spec4}
Y^2 = (X-a_0)(X-a_1)(X-a_2)(X-\nu)
\end{equation}
where $\nu$ is the lone root of $\phi_\mu(x,y)$ described in proposition \ref{PhiRoots}. 

\begin{theorem}
The billiard trajectories in the collared and transverse $\Hyp$-ellipse with nondegenerate caustic $\mathcal{C}_\nu$ are $n$-periodic if and only if 
\begin{align}
m(Q_- - Q_+) &=0  \;\; (n=2m) \label{EvenDiv}\\
(m+1)Q_+ -mQ_- - P_\nu &=0 \;\; (n=2m+1) \label{OddDiv}
\end{align}
on the elliptic curve (\ref{spec4}), with $Q_\pm$ being two points on the curve over $X=0$ and $P_{\nu}$ is a point over $X=\nu$.
\label{DivisorThm}
\end{theorem}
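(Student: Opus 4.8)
The plan is to transport the divisor dynamics already established on $\Gamma$ onto the genus-one curve \eqref{spec4}, read off periodicity there as a single torsion condition, and then split that condition by parity to obtain \eqref{EvenDiv} and \eqref{OddDiv}. First I would record the input from Section~\ref{FMMP}: by \eqref{divisoreqn} one billiard reflection acts on $\mathrm{Jac}(\Gamma)$ as translation by the class $\mathcal{U}=Q_--Q_+$, so $n$-periodicity is a vanishing condition on the total shift. The substitution $X=\mu$, $Y=\lambda|x\wedge y|\,q(\mu)$ is a birational isomorphism carrying $\Gamma$ in the form \eqref{spec3} to the elliptic curve \eqref{spec4}; under it the points $Q_\pm$ become the two points over $X=0$, the infinities $P_\pm$ become the two points at infinity, and the caustic value $X=\nu$ is a branch point carrying the single Weierstrass point $P_\nu$. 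The nondegeneracy hypothesis on $\mathcal{C}_\nu$, namely $\nu\notin\{a_0,a_1,a_2\}$ and $\nu\neq0$, is exactly what makes $Q_+\neq Q_-$ ordinary points and $P_\nu$ a genuine branch point.

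Next I would fix the group law on \eqref{spec4} with origin at $P_\nu$. Then the hyperelliptic involution $\sigma$ is negation, the four branch points constitute $E[2]$, and comparing fibres of the degree-two map $X$ gives $Q_++Q_-\sim 2P_\nu\sim P_++P_-$; in particular $Q_-=-Q_+$ under the group law. Setting $\rho:=[Q_+-P_\nu]$, the spectral shift is $\mathcal{U}=Q_--Q_+=-2\rho$. The heart of the argument is to show that the billiard closes after exactly $n$ reflections if and only if $n\rho=0$, i.e. that the translation seen by a collision point is $\rho$, one half of the divisor shift $-2\rho$. The factor two is structural: the spectral divisor $\mathcal{D}_k$ is $\sigma$-symmetric, so it can return to itself ($2n\rho=0$) while the reconstructed collision point returns only to its $\sigma$-image; true geometric closure forces the stronger $n\rho=0$. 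This is the same two-sheeted, antipodal phenomenon isolated by the $\mathcal{AA}$-map in Section~\ref{FMMP}, and it is the origin of the parity split: for odd $n$ the endpoint configuration differs from the initial one by the half-period represented by the branch point $P_\nu$, which is why $P_\nu$ must appear. It also explains the informal preview preceding the theorem, since $m(Q_--Q_+)=0$ is exactly $2m\rho=0$ and therefore detects the even period $n=2m$.

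The remaining step is bookkeeping via the Abel map based at $P_\nu$, using $Q_-=-Q_+$ and $P_\nu\mapsto 0$. For $n=2m$ the class $m(Q_--Q_+)$ maps to $-2m\rho=-n\rho$, so \eqref{EvenDiv} holds iff $n\rho=0$; for $n=2m+1$ the class $(m+1)Q_+-mQ_--P_\nu$ maps to $(m+1)\rho+m\rho=(2m+1)\rho=n\rho$, so \eqref{OddDiv} holds iff $n\rho=0$. Both divisors have degree zero and are written in precisely the shape required by Lemma~\ref{ImportantLemma}, so the even condition feeds directly into its rank criterion and the odd condition does so after the standard replacement of one point over $X=0$ by the Weierstrass point $P_\nu$.

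The step I expect to be the genuine obstacle is the middle one: proving that geometric periodicity is equivalent to $n\rho=0$ and not merely to the return $2n\rho=0$ of the spectral divisor. This demands following the reconstruction of a collision point from $\mathcal{D}_k$, in particular determining on which of the two sheets over $X$ it lies, and showing that this sheet is swapped by $\sigma$ so that the parity of $n$ dictates whether $P_\nu$ enters. By contrast the transformation to \eqref{spec4} in the first paragraph and the divisor rewriting in the third are routine.
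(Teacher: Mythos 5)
Your reduction of both \eqref{EvenDiv} and \eqref{OddDiv} to the single statement $n\rho=0$ with $\rho=[Q_+-P_\nu]$ is algebraically correct and consistent with the paper: from $Q_++Q_-\sim 2P_\nu\sim P_++P_-$ one gets $m(Q_--Q_+)\sim -2m\rho$ and $(m+1)Q_+-mQ_--P_\nu\sim(2m+1)\rho$. However, your route through the discrete divisor dynamics leaves the central equivalence --- geometric $n$-periodicity if and only if $n\rho=0$ --- unproved, and the heuristic you offer does not close the gap. The Lax dynamics only give $\mathcal{D}_{k+n}=\mathcal{D}_k+n\,\mathcal{U}$ with $\mathcal{U}=-2\rho$, hence the weaker condition $2n\rho=0$; to upgrade this you must show that when $n\rho$ is a nontrivial $2$-torsion class the trajectory fails to close, and, crucially, explain why for odd $n$ the relevant half-period is the one attached to the branch point $P_\nu$ rather than to $P_{a_0}$, $P_{a_1}$, or $P_{a_2}$ (all four are $2$-torsion with your choice of origin). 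Your appeal to the $\sigma$-symmetry of the spectral divisor and to the $\mathcal{AA}$-map also conflates two different doublings: the antipodal phenomenon is specific to the collared case (it is what turns a solution of \eqref{OddDiv} into a $2(2m+1)$-periodic trajectory \emph{inside} the collar, as noted after Theorem \ref{CayleyThm}), whereas the passage from $2n\rho=0$ to $n\rho=0$ concerns closure modulo reflections in the coordinate planes and is equally needed for the transverse table, where genuinely odd periods occur inside the domain.

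The paper supplies exactly this missing input by a different mechanism: it integrates the Abel differential $d\lambda_1/\sqrt{\mathcal{P}(\lambda_1)}+d\lambda_2/\sqrt{\mathcal{P}(\lambda_2)}=0$ along a closed trajectory in the generalized Jacobi coordinates, after a case analysis (three caustic configurations for each of the collared and transverse tables) that fixes the intervals swept monotonically by $\lambda_1,\lambda_2$. The numbers of crossings of the coordinate planes are forced to be even, while the number of caustic tangencies equals $n$ and may be odd; this is precisely what singles out $P_\nu$ in the odd case and yields $\tfrac{n}{2}(Q_--Q_+)=0$, respectively $\tfrac{n+1}{2}Q_+-\tfrac{n-1}{2}Q_--P_\nu=0$. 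If you wish to keep your Lax-theoretic framing, you would still need to reproduce this counting (or an equivalent monodromy argument on the two sheets over $X$), so the step you yourself flagged as the obstacle is where the substance of the proof lies, and your proposal does not supply it.
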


\begin{proof}
Let $\mathcal{P}(x) = (x-a_0)(x-a_1)(x-a_2)(x-\nu)$. Recall that every generic point on the hyperboloid of one sheet has two generalized Jacobi coordinates $\lambda_1, \lambda_2$ which satisfy inequalities outlined in proposition \ref{NumRoots}. 

First consider the case of the collared $\Hyp$-ellipse, which we denote by $\mathcal{E}$. The generalized Jacobi coordinates $\lambda_1, \lambda_2$ satisfy $\lambda_1 \leq a_0 < a_1 \leq \lambda_2 \leq a_2$. By Proposition \ref{ConfocalCurves}, the curve $\mathcal{C}_{\lambda_1}$ will be of elliptic-type for $\lambda_1 \in (-\infty,a_0]$ and $\mathcal{C}_{\lambda_2}$ will be hyperbolic-type for $\lambda_2 \in [a_1,a_2]$. 
Geometrically, $\lambda_1=0$ corresponds to the reflection of the trajectory off of the collared $\Hyp$-ellipse, $\mathcal{C}_0$; $\lambda_1 = a_0$ corresponds to the trajectory crossing the $x_0=0$ plane (which is also the plane of symmetry of the $\mathcal{E}$); $\lambda_2 = a_1$ and $a_2$ correspond to the trajectory crossing the $x_1=0$ and $x_2=0$ planes, respectively. 

There are three possibilities for types of trajectories:
\begin{enumerate}
    \item The caustic is of elliptic type outside of $\mathcal{E}$ and the billiard is within $\mathcal{E}$. Then $\nu < 0$ and $(\lambda_1,\lambda_2) \in [0,a_0]\times [a_1,a_2]$. There is no caustic inside $\mathcal{E}$, and $\lambda_1$ will take on the value 0 at each reflection point. Each coordinate plane must be crossed an even number of times. 
    \item The caustic is of elliptic type outside of $\mathcal{E}$ and the billiard is outside of $\mathcal{E}$. Then in accordance with the discussion in section 3, the billiard trajectories are space-like and all reflect off of one component of $\mathcal{E}$. Then $\nu<0$ and $(\lambda_1,\lambda_2) \in [\nu, 0]\times[a_1,a_2]$. The billiard moves between the one component of $\mathcal{E}$ and the caustic, will not cross the coordinate plane $x_0=0$, but must cross the coordinate planes $x_1=0$ and $x_2=0$ an even number of times. This is the only case which could have an odd period. 
    \item The caustic is of hyperbolic type and the billiard is inside $\mathcal{E}$. Then the caustic is symmetric about the plane $x_2=0$ and $\nu \in [a_1,a_2]$, so that $(\lambda_1,\lambda_2)\in [0,a_0]\times[a_1,\nu]$. The trajectory must become tangent to the caustic at some point inside $\mathcal{E}$.
\end{enumerate}
In each case above, the parameters $\lambda_1, \lambda_2$ change monotonically between the endpoints of the specified intervals. 

Following Jacobi \cite{Jacobi} consider the following differential equation along a billiard trajectory:
\begin{equation}
\frac{d\lambda_1}{\sqrt{\mathcal{P}(\lambda_1)}} + \frac{d\lambda_2}{\sqrt{\mathcal{P}(\lambda_2)}}=0.
\end{equation}
Let $P_\beta$ be a point over $X=\beta$ of the elliptic curve (\ref{spec4}) and $P_\pm$ are the points at infinity. The points $P_{a_0}$, $P_{a_1}$, $P_{a_2}$, and $P_{\nu}$ are all branching points and hence $2kP_{a_0} = 2kP_{a_1} = 2kP_{a_2} = 2kP_{\nu} = k(P_+ + P_-)$.
For a period $n$ trajectory, integrating the above equation along the trajectory leads to 
\begin{align*}
m_0 (P_{a_0}-P_0)  + m_1 (P_{a_2}-P_{a_1}) &=0 \\
m_2 (P_\nu-P_0) + m_3 (P_{a_2}-P_{a_1}) &=0 \\
m_4 (P_{a_0}-P_0) + m_5 (P_\nu - P_{a_1}) &=0 
\end{align*}
via the Abel map. Each $m_i$ must be even except for possibly $m_2$ and $n = m_0 = m_2 = m_4$. Using the equivalence $P_+ + P_- = Q_+ + Q_-$ and letting $P_0 = Q_+$, these three conditions reduce to 
\begin{align*}
\frac{n}{2}(Q_- - Q_+)&=0 \;\; (n \text{ even}) \\
\frac{n+1}{2}Q_+ - \frac{n-1}{2}Q_- - P_\nu &=0  \;\;\;(n \text{ odd})
\end{align*}
Writing $n=2m$ or $n=2m+1$ proves the theorem in the case of the collared $\Hyp$-ellipse.

Next, consider the case of the transverse $\Hyp$-ellipse which we denote by $\mathcal{T}$. The generalized Jacobi coordinates $\lambda_1, \lambda_2$ now satisfy $a_1 \leq \lambda_1 \leq \lambda_2 \leq a_0$.  Again by proposition \ref{ConfocalCurves}, if $\nu < a_1$ or $\nu > a_2$, then $C_\nu$ will be of hyperbolic type and will not intersect $\mathcal{T}$; if $a_1 < \nu < a_0$ then $C_\nu$ will be of elliptic type and intersect $\mathcal{T}$; if $a_0 < \nu < a_2$, then $C_\nu$ will be of elliptic type but it will not intersect $\mathcal{T}$; and if $\nu \in \{a_0,a_1,a_2\}$ then $C_\nu$ is degenerate and will be an hyperbola, a circle, or an hyperbola in the planes $x_0=0$, $x_1=0$, and $x_2=0$, respectively (though the last case will not affect the billiard in $\mathcal{T}$). Both of the possible degenerate cases will intersect $\mathcal{T}$ in two places each, and represent the two ``diameters" of $\mathcal{T}$.

There are three possibilities for trajectories in $\mathcal{T}$:
\begin{enumerate}
    \item The caustic is of hyperbolic type or elliptic type and does not intersect $\mathcal{T}$. Then $\nu < a_1$ or $\nu > a_2 $ (hyperbolic type) or $a_0 < \nu < a_2$ (elliptic type), and $(\lambda_1,\lambda_2) \in [a_1,0]\times [0,a_0]$. At each reflection point one coordinate takes on the value 0. They can both equal to 0 only at the four points where $\mathcal{T}$ has a light-like tangent. At these points the reflection is counted twice. On a closed trajectory the number of reflections is equal to the number of crossings of the planes $x_0=0$ and $x_1=0$, and there must be an even number of crossings of the coordinate planes.
    \item The caustic is of elliptic type and has a nonempty intersection with $\mathcal{T}$ and $a_1<\nu<0$. Then $(\lambda_1, \lambda_2) \in [\nu,0]\times[0,a_1]$. The caustic is oriented along the plane $x_1=0$ and the trajectory must cross the plane $x_0=0$ an even number of times. 
    \item The caustic is of elliptic type and has a nonempty intersection with $\mathcal{T}$ and $0<\nu<a_0$. Then $(\lambda_1, \lambda_2) \in [a_1,0]\times[0,\nu]$. The caustic is oriented along the plane $x_0=0$ and the trajectory must cross the plane $x_1=0$ an even number of times. 
\end{enumerate}
Repeating similar calculations as the case of the collared $\Hyp$-ellipse leads to the same two divisor conditions, (\ref{EvenDiv}) and (\ref{OddDiv}). 
\end{proof}

\begin{theorem}
Consider a billiard trajectory in the collared or transverse $\Hyp$-ellipse. The trajectory is $n$-periodic with period $n=2m \geq 4$ if and only if
\begin{equation}
\label{EvenCayley}
\det\fourbyfour{B_{3}}{B_{4}}{B_{m+1}}{B_{4}}{B_{5}}{B_{m+2}}{B_{m+1}}{B_{m+2}}{B_{2m-1}}=0.
\end{equation}
The trajectory is $n$-periodic with period $n=2m+1 \geq 3$ if and only if
\begin{equation}
\det \fourbyfour{D_{2}}{D_{3}}{D_{m}}{D_{3}}{D_{4}}{D_{m+1}}{D_{m}}{D_{m+1}}{D_{2m}}=0. 
\label{OddCayley}
\end{equation}
For each case,
$$\sqrt{(X-a_0)(X-a_1)(X-a_2)(X-\nu)} = B_0 + B_1X + B_2 X^2 + \cdots$$
and 
$$\sqrt{\frac{(X-a_0)(X-a_1)(X-a_2)}{X-\nu}} = D_0 + D_1X + D_2 X^2 + \cdots$$
are the Taylor expansions around $X=0$. Furthermore, the only 2-periodic trajectories are contained in the planes of symmetry. 
\label{CayleyThm}
\end{theorem}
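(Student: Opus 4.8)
The plan is to combine the divisor characterization of periodicity from Theorem \ref{DivisorThm} with determinantal (Cayley-type) translations of the two divisor relations (\ref{EvenDiv}) and (\ref{OddDiv}) on the elliptic curve (\ref{spec4}). Throughout I would work on $\Gamma: Y^2 = \ep(X-a_0)(X-a_1)(X-a_2)(X-\nu)$, which for a nondegenerate caustic $\mathcal{C}_\nu$ has four distinct branch points $a_0,a_1,a_2,\nu$, all different from $0$; hence $X=0$ is an ordinary (non-branch) point lying under the two points $Q_\pm$, the parameter $X$ is local at each of them, and the genus is $g=1$. The even and odd cases then reduce to translating $m(Q_--Q_+)=0$ and $(m+1)Q_+-mQ_--P_\nu=0$, respectively, into rank conditions on Hankel matrices.

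For the even case I would apply Lemma \ref{ImportantLemma} verbatim with $g=1$. The curve (\ref{spec4}) is exactly of the form $y^2=(x-x_1)\cdots(x-x_{2g+2})$ with $2g+2=4$ distinct nonzero roots, and $Q_\pm$ lie over $x=0$; the expansion $\sqrt{\ep(X-a_0)(X-a_1)(X-a_2)(X-\nu)}=B_0+B_1X+\cdots$, after fixing the branch so that it is the expansion at $Q_-$, is the series in the Lemma. The Lemma states that (\ref{EvenDiv}) is equivalent to the displayed matrix having rank $<m-g=m-1$; with $g=1$ that matrix is the square $(m-1)\times(m-1)$ Hankel matrix with entries $B_{i+j+1}$ and corners $B_3$ and $B_{2m-1}$, and for a square matrix ``rank $<m-1$'' means vanishing determinant. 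This is precisely (\ref{EvenCayley}), valid for $m\geq 2$.

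The odd case needs the analogue of Lemma \ref{ImportantLemma} for (\ref{OddDiv}), in which the branch point $P_\nu$ appears, and this is the main obstacle: it is not a pure relation $k(Q_+-Q_-)=0$, so no direct specialization applies. I would reduce it to Riemann--Roch: (\ref{OddDiv}) holds iff there is a meromorphic $f$ with $(f)=(m+1)Q_+-mQ_--P_\nu$, i.e. an element of the $(m+1)$-dimensional space $L(mQ_-+P_\nu)$ (here $\deg(mQ_-+P_\nu)=m+1>2g-2=0$, so $\ell=m+1$) vanishing to order $m+1$ at $Q_+$. The key object is $Y/(X-\nu)$: near $P_\nu$ one has $X-\nu\sim Y^2$, so it has a simple pole there, while over $X=0$ it expands as $\pm\sqrt{\ep(X-a_0)(X-a_1)(X-a_2)/(X-\nu)}=\pm(D_0+D_1X+\cdots)$, identifying the $D_i$ and recording $D(X)=B(X)/(X-\nu)$. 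Building a basis of $L(mQ_-+P_\nu)$ from powers of $X$ together with the single function $Y/(X-\nu)$ that carries the $P_\nu$-pole, and imposing the $m+1$ vanishing conditions at $Q_+$, turns existence of $f$ into the vanishing of a Hankel determinant in the $D_i$, namely (\ref{OddCayley}). The delicate point is choosing this basis so the local expansions at $Q_+$ assemble into a genuine Hankel matrix with the correct index range, tracking the orders of vanishing and, in particular, subtracting the poles of $Y/(X-\nu)$ at infinity against the $X$-power terms; the relation $D(X)=B(X)/(X-\nu)$ is what keeps the computation parallel to the even one.

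Finally, the $2$-periodic ($m=1$) case falls outside the determinantal formulas, as the matrices degenerate, and I would treat it directly. A $2$-periodic orbit retraces a single geodesic segment, so by the reflection law of Section \ref{BonH} the segment must meet the boundary orthogonally at both endpoints; combined with the tangency-to-$\mathcal{C}_\nu$ statement of Proposition \ref{PhiRoots} and the pairwise orthogonality of confocal conics, this forces the chord to lie along an axis of symmetry, hence inside one of the planes $x_i=0$.
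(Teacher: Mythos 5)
Your proposal follows the paper's proof essentially verbatim: the even case is Lemma \ref{ImportantLemma} with $g=1$ applied to the curve (\ref{spec4}), and the odd case is the same Riemann--Roch argument on $\mathcal{L}(mQ_-+P_\nu)$, where the paper makes your basis explicit as $f_k=\bigl(y-D_0-D_1x-\cdots-D_kx^k\bigr)/x^k$ with $y=Y/(X-\nu)$ so that the vanishing conditions at $Q_+$ produce exactly the Hankel matrix in the $D_i$. Your direct treatment of the $2$-periodic case is a sensible addition (the paper asserts it without proof), and the whole argument is sound.
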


\begin{proof}
Using theorem \ref{DivisorThm} we consider the even and odd cases separately. If $n=2m$ then lemma \ref{ImportantLemma} with $g=1$ applies directly, proving the condition (\ref{EvenCayley}). If $n = 2m+1$, the divisor condition (\ref{OddDiv}) is equivalent to the existence of a meromorphic function with a pole of order $m$ at $Q_-$, a simple pole at $P_\nu$, and a unique zero of order $m+1$ at $Q_+$. One basis of the space of such functions $\mathcal{L}(mQ_- + P_\nu)$ is 
\begin{align}
\{1, f_1, \ldots, f_m\}
\end{align} 
where 
\begin{align}
f_k &= \frac{y-D_0 - D_1 x + \cdots + D_k x^k}{x^k}
\end{align} and the coefficients $D_i$ are given in the statement of the theorem. The existence of such a function is equivalent to condition (\ref{OddCayley}).
\end{proof}

As is noted in \cite{DJR}, adjustments to the proofs of the previous two theorems can be made to allow the case when the elliptic curve (\ref{spec3}) has singularities (i.e. when the constants $a_i$ and root of $\phi_\mu(x,y)$ are not distinct). 

\begin{remark}
Theorem \ref{CayleyThm} provides an analytical condition for  trajectories in the collared and transverse $\Hyp$-ellipse. In the case of the collared $\Hyp$-ellipse, there is a caveat to the interpretation of these periodic trajectories. As the collared $\Hyp$-ellipse has two components curves, any periodic trajectory inside the collared $\Hyp$-ellipse must have even period, but a periodic trajectory outside the collared $\Hyp$-ellipse can have odd period (see theorem \ref{DifferentBoundary} and corollary \ref{DifferentBoundary2}). That is, the condition given by (\ref{OddCayley}) will produce a trajectory with period $2(2m+1)$ or $2m+1$ if the motion is on the interior or exterior of the collared $\Hyp$-ellipse, respectively. Both cases will project to a trajectory with period $2m+1$ in the Klein coordinates $(\xi_1, \xi_2)$. 

We also note equation (\ref{EigenvectorEqn}) is invariant with respect to symmetries in the coordinate planes of the vectors $x_k$, $\eta_k$, and $\zeta_k$ which represent motion.  As such, closed trajectories will be unique up to those symmetries. 
\end{remark}

In \cite{DR3,DR4}, periodic light-like trajectories inside ellipses in the Minkowski plane are studied. We may also consider the previous two theorems in the special case of light-like trajectories. On $\Hyp$, a general light-like trajectory is a member of one of two families of generatrices. Upon reflection from the boundary of the collared or transverse $\Hyp$-ellipse, the billiard trajectory will switch from one family to the other. Thus to be periodic, light-like trajectories must be of even period, and we only need to consider condition (\ref{EvenCayley}) in this context. As light-like trajectories have a caustic at $\nu=\infty$, we arrive at a similar theorem.

\begin{theorem}
Consider a light-like billiard trajectory in the collared or transverse $\Hyp$-ellipse. The trajectory is $2m$-periodic for $2m \geq 4$ if and only if
\begin{equation}
\label{LightEvenCayley}
\det\fourbyfour{E_{3}}{E_{4}}{E_{m+1}}{E_{4}}{E_{5}}{E_{m+2}}{E_{m+1}}{E_{m+2}}{E_{2m-1}}=0,
\end{equation}
where
$$\sqrt{(X-a_0)(X-a_1)(X-a_2)} = E_0 + E_1X + E_2 X^2 + \cdots$$
is the Taylor expansion around $X=0$. 
\label{LightCayleyThm}
\end{theorem}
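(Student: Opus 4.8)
The plan is to obtain Theorem \ref{LightCayleyThm} as the $\nu\to\infty$ degeneration of the even-period case of Theorem \ref{CayleyThm}. Two preliminary facts are already established in the discussion preceding the statement: a light-like trajectory alternates between the two families of generatrices and so can only close after an even number of reflections, meaning only the even condition (\ref{EvenCayley}) is relevant; and a light-like segment $xy$ on $\Hyp$ has $|x\wedge y|^2 = \ip{x}{y}^2-1 = 0$, which forces the denominator in the formula for $\nu$ in Proposition \ref{PhiRoots} to vanish and drives the caustic parameter to $\nu=\infty$. Thus for a light-like orbit the spectral curve (\ref{spec4}) loses the branch point $X=\nu$ to infinity; after rescaling $Y$ by $\sqrt{|\nu|}$ it degenerates to the elliptic curve $Y^2=\delta(X-a_0)(X-a_1)(X-a_2)$ with $\delta=\sgn{a_1}$, which is still of genus $g=1$, now branched at infinity. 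The even divisor condition $m(Q_--Q_+)=0$ of Theorem \ref{DivisorThm} involves only the points $Q_\pm$ over $X=0$ and not $P_\nu$, so it retains its form under this degeneration.

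First I would relate the two Taylor expansions. Writing $\ep=\delta\,\sgn\nu$ and factoring out the $X$-independent piece gives
\[
\sqrt{\ep(X-a_0)(X-a_1)(X-a_2)(X-\nu)} = c(\nu)\,\sqrt{\delta(X-a_0)(X-a_1)(X-a_2)}\,\sqrt{1-X/\nu},
\]
where $c(\nu)$ (namely $i\sqrt{|\nu|}$) is independent of $X$. Expanding $\sqrt{1-X/\nu}=1+O(1/\nu)$ coefficientwise, the coefficients of (\ref{EvenCayley}) satisfy $B_k=c(\nu)\bigl(E_k+O(1/\nu)\bigr)$ for each $k$. Since the determinant in (\ref{EvenCayley}) is homogeneous of degree $m-1$ in the entries, the common factor $c(\nu)$ pulls out and, letting $\nu\to\infty$, the vanishing of that determinant becomes the vanishing of the determinant with entries $E_k$, which is exactly (\ref{LightEvenCayley}). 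The possibly imaginary constant $c(\nu)$ is harmless precisely because it is an overall homogeneous factor.

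To make this rigorous rather than merely formal, I would run the argument directly on the limiting cubic instead of as a limit of quartics. That is, I would repeat the $\mathcal{L}(mQ_-)$ construction used in the proof of Theorem \ref{CayleyThm} and Lemma \ref{ImportantLemma}, but on the curve $Y^2=\delta(X-a_0)(X-a_1)(X-a_2)$: the functions $f_k=\bigl(Y-(E_0+E_1X+\cdots+E_kX^k)\bigr)/X^k$ span $\mathcal{L}(mQ_-)$, the local expansion of $Y$ at $Q_-$ is now $E_0+E_1X+\cdots$, and requiring the prescribed zero of order $m$ at $Q_+$ produces the same $(m-1)\times(m-1)$ Hankel determinant, now in the $E_k$. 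The branch point that escaped to infinity causes no trouble, because $Q_\pm$ and all the functions involved live over the finite point $X=0$, and the genus remains $g=1$, so the index ranges in the determinant are unchanged.

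The main obstacle is exactly this last point: Lemma \ref{ImportantLemma} is stated only for even-degree hyperelliptic models $y^2=(x-x_1)\cdots(x-x_{2g+2})$, whereas the light-like degeneration yields an odd-degree (cubic) model with a branch point at infinity, so the lemma cannot be quoted verbatim. I expect the bulk of the work to be checking that this odd-degree elliptic curve admits the same Riemann--Roch bookkeeping, namely that $\dim\mathcal{L}(mQ_-)=m$ and that the required function has its poles controlled exactly as before, so that the Hankel-determinant criterion carries over with $B_k$ replaced by $E_k$ and the index range untouched. Once that is in place, the even-period-only restriction and the $\nu=\infty$ specialization complete the proof.
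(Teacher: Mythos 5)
Your proposal is correct and follows essentially the same route as the paper, which justifies this theorem only by the short discussion preceding it: light-like trajectories alternate between the two families of generatrices and hence close only after an even number of reflections, their caustic degenerates to $\nu=\infty$, and the even Cayley condition (\ref{EvenCayley}) is then specialized accordingly, replacing the quartic by the cubic under the square root. Your additional care about the sign bookkeeping ($\ep=\sgn{a_1}\,\sgn{\nu}$ versus $\delta=\sgn{a_1}$, absorbed into the homogeneous factor $c(\nu)$) and about rerunning the Riemann--Roch argument on the odd-degree model with a branch point at infinity supplies rigor that the paper leaves implicit.
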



\section{Geometric Consequences and Examples}\label{GC}

\subsection{The Collared $\Hyp$-Ellipse}
Consider the hyperboloid of one sheet in $\Mink$ with $A=\diag(3,6,9)$. The two component curves of the collared $\Hyp$-ellipse can be parametrized as $$\gamma_C(t) = \left(\pm \frac{\sqrt{2+t^2}}{2}, t,\pm \frac{\sqrt{3}}{2} \sqrt{2-t^2}\right), \;\; -\sqrt{2} \leq t \leq \sqrt{2}.$$ This is projected into the Klein coordinates as 
\begin{equation}
\label{A1Klein}
\frac{\xi_1^2}{2} + \frac{\xi_2^2}{3}=1.
\end{equation}
The projection $\pi_\xi$ is a double cover of the ellipse in the $\xi_1\xi_2$-plane. Any period $m$ trajectory in the $\xi_1\xi_2$-plane can correspond to either a period $m$ or period $2m$ trajectory on the hyperboloid of one sheet.

\begin{figure}[bht]
\begin{tabular}{c c}
a) \includegraphics[width=0.50\textwidth]{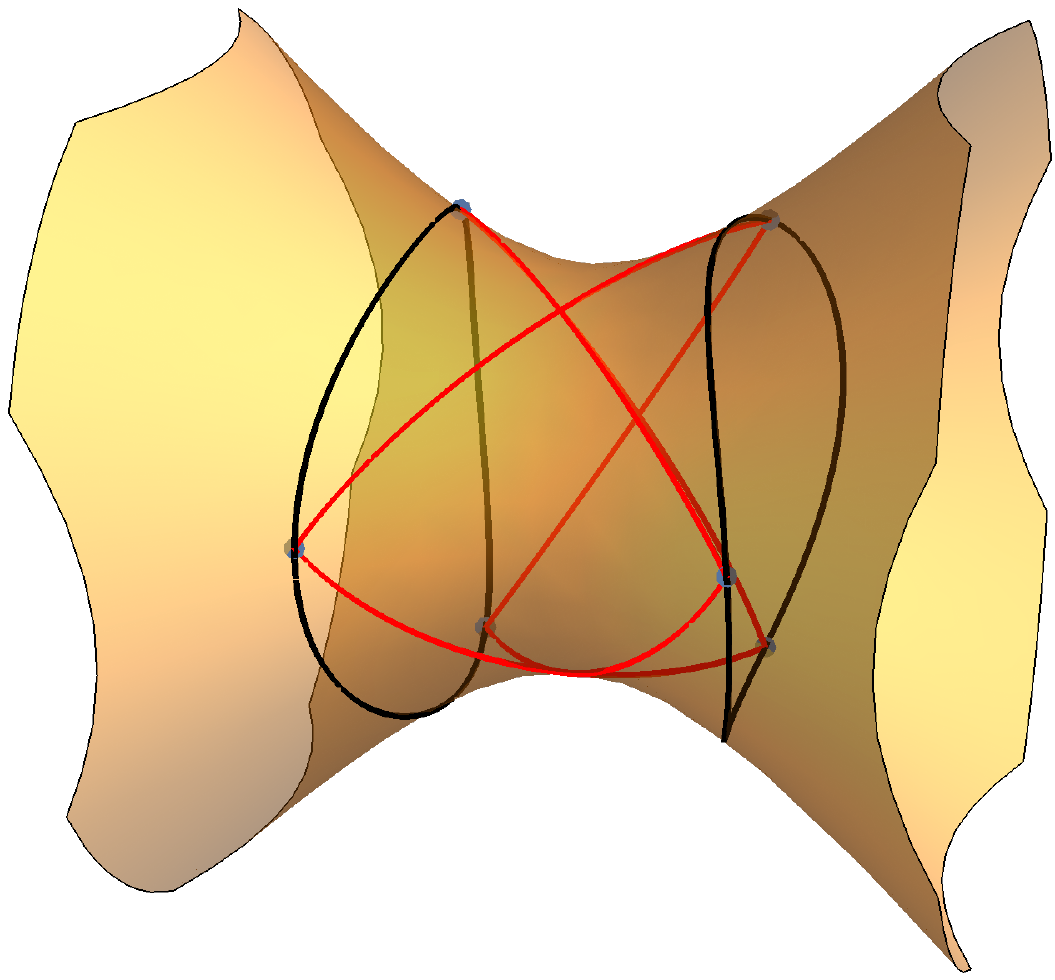} & b) \includegraphics[width=0.40\textwidth]{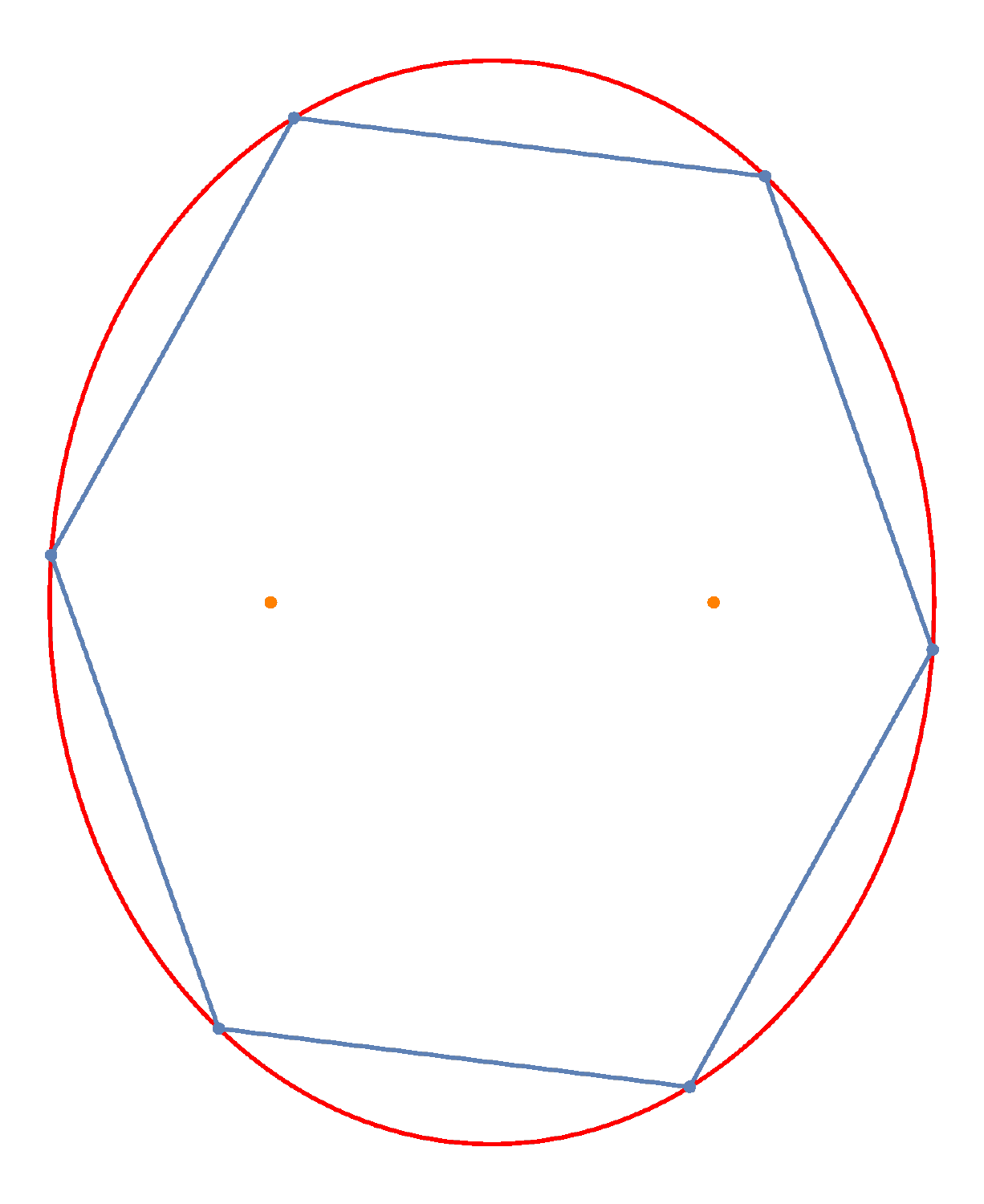} \\
\end{tabular}
\caption{a) A space-like period 6 trajectory in the collared $\Hyp$-ellipse. b) the projection of the period 6 orbit into Klein coordinates. }
\label{EllipseExample}
\end{figure}

An interesting geometric consequence due to the projection into Klein coordinates is that the ellipse (\ref{A1Klein}) should have foci at $(0,\pm 1)$ in the standard Euclidean sense, however the projected trajectories produce caustics which are confocal around the points $(\pm 1/\sqrt{2},0)$. See figure \ref{TrajEx}b. 

The caustic of a periodic trajectory can be of elliptic- or hyperbolic-type. The curve $C_\nu$ projects into Klein coordinates as 
\begin{equation}
\frac{\xi_1^2}{\frac{a_1-\nu}{a_0-\nu}}+\frac{\xi_2^2}{\frac{a_2-\nu}{a_0-\nu}} =1
\label{ProjectedCausticEqn}
\end{equation}
which are Euclidean ellipses and hyperbolas (oriented along the $\xi_1$-axis) for $\nu \in (-\infty,a_0)$ and $\nu \in (a_1,a_2)$, respectively. In addition, for $\nu >a_2$ the hyperboloid of one sheet and the cone $\ip{(A-\nu I)^{-1}x}{x}=0$ do not intersect (i.e. $C_\nu = \emptyset$), but the curve above will still be an elliptical caustic for the projected billiard trajectory.

\begin{cor}
Consider the billiard in the collared $\Hyp$-ellipse. 
\begin{enumerate}[(i)]
    \item Once projected into Klein coordinates $(\xi_1,\xi_2)$, the billiard table is itself an ellipse and the projection of the caustic curves $C_\nu$ preserves the type of curve. 
    \item A trajectory inside the collared $\Hyp$-ellipse stays in the region $\{x_2 \geq 0\}$ (after suitable change of coordinates, if necessary) if and only if the trajectory projected into the Klein coordinates has a hyperbolic caustic. 
    \item Any trajectory in the Klein ellipse that passes through one focus will upon reflection pass through the other focus. 
\end{enumerate}
\end{cor}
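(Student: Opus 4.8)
The plan is to dispatch the three parts in order of increasing difficulty: (i) by a sign analysis of the projected equation, (ii) by the monotonicity of the generalized Jacobi coordinates already isolated in the proof of Theorem~\ref{DivisorThm}, and (iii) by a degenerate-caustic (Poncelet focal) argument carried out in the Klein picture, which I expect to be the genuine obstacle.

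For (i), the projection of the collared $\Hyp$-ellipse is $(\ref{KleinEqn})$ with $1<b_1<b_2$, which is a genuine Euclidean ellipse, so the table is an ellipse. For the caustic I would simply read off the signs of the two denominators in $(\ref{ProjectedCausticEqn})$. By Proposition~\ref{ConfocalCurves}(i) an elliptic-type caustic has $\nu\le a_0$, so $a_0-\nu,a_1-\nu,a_2-\nu>0$ and both $\tfrac{a_1-\nu}{a_0-\nu}$ and $\tfrac{a_2-\nu}{a_0-\nu}$ are positive, yielding a Euclidean ellipse; a hyperbolic-type caustic has $a_1\le\nu\le a_2$, so $a_0-\nu<0$ forces $\tfrac{a_1-\nu}{a_0-\nu}\ge0$ and $\tfrac{a_2-\nu}{a_0-\nu}\le0$, yielding a Euclidean hyperbola. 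Hence the type is preserved.

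For (ii), I would use that, as recorded in the proof of Theorem~\ref{DivisorThm} (cf. Proposition~\ref{NumRoots}), the Jacobi coordinate $\lambda_2=a_2$ precisely when the trajectory meets the plane $x_2=0$. A hyperbolic caustic has $\nu\in(a_1,a_2)$, forcing $\lambda_2\in[a_1,\nu]$, so $\lambda_2$ never attains $a_2$; the trajectory therefore never meets $x_2=0$ and, being connected, remains in one of the half-spaces $\{x_2>0\}$ or $\{x_2<0\}$, which after the isometry $x_2\mapsto-x_2$ we may take to be $\{x_2\ge0\}$. Conversely, an elliptic caustic has $\nu<0$ and $\lambda_2\in[a_1,a_2]$, so $\lambda_2$ attains $a_2$ and the plane $x_2=0$ is crossed; the trajectory then leaves $\{x_2\ge0\}$. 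Together with part (i) this is the claimed equivalence.

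For (iii), I would first identify the foci as a degenerate caustic: setting $\nu=a_2$ in $(\ref{ProjectedCausticEqn})$ collapses the curve to the two points $\bigl(\pm\sqrt{\tfrac{a_2-a_1}{a_2-a_0}},0\bigr)$ on the $\xi_1$-axis, which for $A=\diag(3,6,9)$ are the points $(\pm1/\sqrt2,0)$ noted after $(\ref{A1Klein})$. Because the central projection $\pi_\xi$ carries geodesics to lines and $\mathcal{C}_\nu$ to the conic $(\ref{ProjectedCausticEqn})$, the common-caustic tangency of Theorem~\ref{FixedCaustic} descends to Klein coordinates, so every segment of the projected trajectory is tangent to a single projected caustic. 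If one segment passes through a focus, that common caustic must be the degenerate member $\nu=a_2$, whence \emph{every} segment passes through a focus. The alternation between the two foci I would extract as the limit $\nu\to a_2^-$ of the hyperbolic-caustic case of part (ii): for $\nu$ just below $a_2$ the caustic is a hyperbola whose two branches straddle the $\xi_1$-axis and consecutive tangent segments touch opposite branches, and as the branches collapse onto the foci the consecutive focal segments pass through opposite foci. The main obstacle is making this last step rigorous — establishing the branch-alternation for a hyperbolic caustic, equivalently that the reflection in the Lorentzian metric of Figure~\ref{BillRef} genuinely interchanges the two focal points rather than fixing one — all the more delicate because the relevant focus is not the Euclidean focus of the table and the degenerate caustic carries both a focal-segment and a focal-ray component; the axial trajectory along $\xi_2=0$, which meets both foci, must be excluded as the sole exception.
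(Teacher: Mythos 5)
Your parts (i) and (ii) are correct and coincide with what the paper actually relies on: the paper offers no separate proof of this corollary, but the sign analysis of the denominators in (\ref{ProjectedCausticEqn}) is carried out verbatim in the sentence preceding the statement, and the equivalence in (ii) is exactly the content of the case analysis in the proof of Theorem \ref{DivisorThm} (a hyperbolic caustic forces $\lambda_2\in[a_1,\nu]$ with $\nu<a_2$, so the value $\lambda_2=a_2$, i.e.\ the plane $x_2=0$, is never attained; any elliptic caustic constrains only $\lambda_1$ and leaves $\lambda_2$ free to sweep all of $[a_1,a_2]$). One small omission in (i): the paper also counts $\nu>a_2$ as producing an elliptical projected caustic even though $\mathcal{C}_\nu=\emptyset$ there, but since there is then no curve on $\Hyp$ to project, this does not affect the claim as stated.

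For (iii) the paper proves nothing at all --- it simply cites \cite{Tab} and \cite{V} --- so your degenerate-caustic argument is an addition rather than a reproduction, and it is the right route. Your identification of the foci as the endpoints of the degenerate member $\nu=a_2$ of the projected family, and the reduction via Theorem \ref{FixedCaustic} to ``every segment passes through some focus,'' are both sound. The alternation step you flag as the obstacle can, however, be closed without any limiting argument: if the segment $PQ$ passes through the focus $F_1$ and the reflected segment $QR$ also passed through $F_1$, then the lines $QP$ and $QR$ would both contain the two distinct points $Q$ and $F_1$ and hence coincide; a proper reflection whose outgoing line equals the incoming line must reverse the velocity, so the chord is Lorentz-normal to the boundary at $Q$ and the orbit is $2$-periodic. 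By the last assertion of Theorem \ref{CayleyThm} such an orbit lies in a plane of symmetry, and the only symmetry axis through a focus is the $\xi_1$-axis, which contains both foci, so the conclusion holds there trivially. This two-line rigidity argument replaces your branch-collapsing limit $\nu\to a_2^-$ and disposes of the axial exception at the same time.
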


Property (3) of billiards is well-known \cite{Tab} and was shown to also be true in \cite{V} for the cases of one sheet of the hyperboloid of two sheets and spherical billiards once projected into Klein coordinates. 

Another related corollary is an extension of the ``string construction" of the Euclidean ellipse, given by Graves \cite{Ber}.

\begin{theorem}[Graves]
Given an ellipse $E$ and a closed piece of string with length strictly greater than the perimeter of $E$, the locus of a pencil used to pull the string taut around $E$ is another ellipse, $E^\prime$, confocal with $E$. 
\end{theorem}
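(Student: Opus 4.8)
The plan is to show that along the curve traced by the pencil the sum of distances to the two foci $F_1,F_2$ of $E$ is constant; since the locus $|PF_1|+|PF_2|=\text{const}$ is precisely an ellipse with foci $F_1,F_2$, this identifies $E'$ as a conic confocal with $E$. I would prove the constancy by a first-variation argument, whose merit is that it never requires evaluating the arc length of $E$ that enters the string.

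First I would fix the configuration. For a pencil at an exterior point $P$, the taut loop is made of two rectilinear segments $PA$, $PB$ together with the arc of $E$ joining the contact points $A,B$ on the far side, of fixed total length $\ell=|PA|+|PB|+\operatorname{arc}(AB)$. For $P$ held fixed, the taut string is exactly the one meeting $E$ tangentially, so $A$ and $B$ are stationary points of the length; by the envelope theorem the derivative of $\ell$ as $P$ moves may be taken with $A,B$ frozen and with the arc contributing nothing, giving
\begin{equation*}
0=\delta\ell=(\hat u_A+\hat u_B)\cdot\delta P,
\end{equation*}
where $\hat u_A,\hat u_B$ are the unit vectors from $A,B$ toward $P$. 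Thus the tangent to $E'$ at $P$ is orthogonal to $\hat u_A+\hat u_B$; equivalently $E'$ reflects the incoming segment $AP$ into the outgoing segment $PB$, which is the billiard law with $E$ playing the role of caustic.

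Next I would differentiate the focal sum: $\delta(|PF_1|+|PF_2|)=(\hat v_1+\hat v_2)\cdot\delta P$, with $\hat v_i$ the unit vector from $F_i$ to $P$. In view of the previous display, this vanishes provided $\hat v_1+\hat v_2$ is parallel to $\hat u_A+\hat u_B$, i.e. the internal bisector of $\angle F_1PF_2$ coincides with that of $\angle APB$. The theorem therefore reduces to the single classical fact that the two tangents from an exterior point $P$ to $E$ are isogonal with respect to the focal radii, $\angle F_1PA=\angle F_2PB$, which I would derive from the optical property of $E$ (the tangent at each contact point bisects the angle between the focal radii there) by chasing those equal angles through the triangles $F_1PF_2A$ and $F_1PF_2B$. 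Then $\delta(|PF_1|+|PF_2|)=0$ everywhere on the locus, so the sum is constant and $E'$ is confocal with $E$.

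The main obstacle is the isogonality statement, on which the whole argument turns; the variational bookkeeping is routine once the envelope condition is in hand. Care is needed to keep the angle chase valid for a genuinely exterior point and nondegenerate when $P$ approaches an axis of $E$. Finally, the hypothesis that the string is strictly longer than the perimeter of $E$ is what guarantees that $P$ lies strictly outside $E$, so that two distinct real tangents $PA,PB$ exist at every position and the taut configuration stays nondegenerate throughout.
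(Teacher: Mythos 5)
The paper does not actually prove this statement: it is quoted as a classical theorem of Graves with a citation to \cite{Ber}, and only its consequence for the Klein projection of the collared $\Hyp$-ellipse is used afterwards. So there is no in-paper argument to compare against; your proposal must be judged on its own, and it is correct --- it is essentially the standard variational proof. Two points are worth making explicit. First, the envelope step: freezing $A,B$ and dropping the arc is legitimate because the taut configuration is a critical point of $|PA|+|PB|+\operatorname{arc}(AB)$ in $(A,B)$; moving $A$ by $ds$ along $E$ changes $|PA|$ by $-\hat u_A\cdot\delta A=\pm ds$ (tangency makes $\delta A$ parallel to $\hat u_A$) while the arc changes by $\mp ds$, and these cancel --- which is precisely why the unknown arc length never needs to be evaluated. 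Second, the isogonality lemma $\angle F_1PA=\angle F_2PB$, which you rightly identify as the crux, follows by reflecting $F_1$ across the line $PA$ and $F_2$ across the line $PB$: the optical property places each reflected focus on the focal chord through the opposite focus and the contact point, both reflected distances equal the major axis, and the two resulting triangles are congruent by SSS. To conclude that the bisectors of $\angle APB$ and $\angle F_1PF_2$ coincide you should also note that both foci lie strictly inside the angle $APB$ (they are interior to $E$, which lies inside that angle), so the equal angles are taken from the respective adjacent tangent rays; and the final passage from ``the focal sum is locally constant'' to ``the locus is the whole confocal ellipse'' uses that the locus is a connected closed curve encircling $E$. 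With those details supplied the argument is complete.
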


\begin{cor}
The Graves' Theorem also applies to the collared $\Hyp$-ellipse once projected into the Klein coordinates $(\xi_1,\xi_2)$. Moreover, the ellipse can be determined geometrically as the locus of all points $\xi$ satisfying $$\widehat{\rho}(F_-,\xi) + \widehat{\rho}(F_+,\xi) = c$$ for fixed foci $F_{\pm}$, constant $c$, and $\widehat{\rho}$ is the distance on the hyperboloid of one sheet in Klein coordinates.
\end{cor}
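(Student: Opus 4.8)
The plan is to transplant the classical proof of Graves' theorem, with two substitutions dictated by the geometry: Euclidean length is replaced by the induced distance $\widehat{\rho}$, and Euclidean reflection is replaced by the Lorentzian billiard reflection which, by Section \ref{BonH}, the Klein projection intertwines with the geodesic flow. This substitution is essential rather than cosmetic: although the projected table is a genuine Euclidean ellipse (equation \ref{A1Klein}), the construction is governed by $\widehat{\rho}$ and by the foci $F_{\pm}$, which are \emph{not} the Euclidean foci of that ellipse. The central object is the \emph{string functional}: for a point $P$ outside the inner curve $E$, let $T_1,T_2$ be the two points of $E$ at which the geodesics from $P$ are tangent to $E$, and set
\[
f(P) = \widehat{\rho}(P,T_1) + \widehat{\rho}(P,T_2) + \widehat{\mathrm{arc}}_E(T_1,T_2),
\]
the total length of the taut string. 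I would show that the level sets $\{f=c\}$ are precisely the confocal curves $\mathcal{C}_\lambda$; the outer curve $E'$ produced by the string construction is then one of them, hence confocal with $E$, which is Graves' theorem.

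First I would record the two geometric inputs. The reflection law in the induced Lorentzian metric (Section \ref{BonH}) states that the incoming and outgoing segments of a trajectory make equal angles with the boundary; combined with Theorem \ref{FixedCaustic}, which guarantees that every segment of a confocal billiard trajectory is tangent to a fixed confocal conic, this yields the \emph{equal-angle tangency property}: the two tangent geodesics from $P$ to $E$ make equal angles, measured in the induced metric, with the confocal curve through $P$. Equivalently, a geodesic tangent to $E$ and incident to the confocal curve reflects into the other tangent geodesic to $E$. This is the Lorentzian analogue of the reflective duality of confocal conics, and it is the property that makes the string close up.

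Next I would carry out the first-variation computation. Letting $P$ move along a confocal curve with velocity $\dot P$, the Gauss lemma gives $\tfrac{d}{dt}\widehat{\rho}(P,T_i) = \ip{\dot P}{u_i}$ up to normalization, where $u_i$ is the unit tangent at $P$ of the geodesic $PT_i$; the contributions from the moving tangency points $T_i$ vanish to first order precisely because those geodesics are tangent to $E$, so sliding $T_i$ along $E$ costs nothing, and the arc term changes only through its endpoints. The equal-angle tangency property then forces $\tfrac{d}{dt}f(P) = 0$, so $f$ is constant along each confocal curve and its level sets are exactly the $\mathcal{C}_\lambda$, establishing Graves' theorem. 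The two-foci formula follows by degenerating the inner curve $E$ to the focal diameter joining $F_-$ and $F_+$: the tangency points collapse onto the foci, the arc term disappears, and $f(P)$ reduces to $\widehat{\rho}(F_-,P)+\widehat{\rho}(F_+,P)$, while Property (3) of the preceding corollary (a geodesic through one focus reflects through the other) is exactly the degenerate case of the equal-angle tangency property, so the argument applies verbatim and the locus is $\{\widehat{\rho}(F_-,\xi)+\widehat{\rho}(F_+,\xi)=c\}$.

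I expect the main obstacle to be the variational step in the indefinite setting: the metric on $\Hyp$ is Lorentzian, so $\widehat{\rho}$ is defined, and the Gauss-lemma first-variation formula holds with controlled signs, only when the relevant geodesics have a fixed causal type (Proposition \ref{GeodesicProp}). One must therefore restrict to a region where the tangent geodesics from $P$ to $E$ exist and are space-like (so that $\widehat{\rho}$ is a genuine distance), verify that the equal-angle cancellation survives the sign bookkeeping of the indefinite inner product, and confirm that the hypothesis $c >$ perimeter of $E$ keeps $P$ in that region with two honest tangent geodesics. Handling the causal-type constraints and the degenerate foci uniformly --- rather than the algebra of the cancellation, which is routine once the signs are fixed --- is where the real care is needed.
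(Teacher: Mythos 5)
Your overall strategy --- the string functional $f(P)=\widehat{\rho}(P,T_1)+\widehat{\rho}(P,T_2)+\widehat{\mathrm{arc}}_E(T_1,T_2)$, its first variation, and the reflective property of confocal conics supplying the cancellation --- is the classical Graves argument, and it is a genuinely different route from the paper, which in fact offers no proof of this corollary at all: the text simply records that in Klein coordinates $ds^2=-ds_{Lob}^2$ and lets the statement ride on the analogous string construction in the Lobachevsky-space setting. Your variational step is sound in outline, and you correctly flag that the causal-type bookkeeping (fixing the type of the tangent segments so that $\widehat{\rho}$ and the first-variation formula make sense) is where care is needed in the body of the argument.

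The genuine gap is in your final step, the degeneration to the two-foci formula, and it is not a matter of sign bookkeeping. The foci $F_{\pm}$ of the projected confocal family (for $A=\diag(3,6,9)$ these are $(\pm 1/\sqrt{2},0)$, as noted in the paper) satisfy $\xi_1^2+\xi_2^2<1$: they lie on the far side of the absolute conic $\xi_1^2+\xi_2^2=1$, whereas the image of the Klein projection of $\Hyp$ is $\{\xi_1^2+\xi_2^2>1\}$ and the metric $\widehat{\rho}$ is singular on the absolute (the denominator $(\xi_1^2+\xi_2^2-1)^2$ vanishes there, and inside the disk the quadratic form is negative definite). Consequently the caustics $\mathcal{C}_\nu$ do \emph{not} degenerate to a real focal segment joining $F_-$ to $F_+$ inside the billiard region --- the degenerate members $\nu=a_0,a_1,a_2$ meet $\Hyp$ in at most isolated points or not at all --- so there is no limiting inner curve whose tangency points ``collapse onto the foci,'' and $\widehat{\rho}(F_\pm,\xi)$ is not produced by a taut-string limit: any transversal path from $F_\pm$ to a point of the table crosses the absolute, where the arc-length integrand blows up. Establishing the second sentence of the corollary therefore requires a separate argument --- either a direct computation with the explicit $ds^2$ showing that a suitably regularized or analytically continued sum of focal distances is constant on the curves (\ref{ProjectedCausticEqn}), or a transfer of the focal property through the identification $ds^2=-ds_{Lob}^2$ --- rather than the verbatim degeneration you describe.
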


It is a quick exercise in differential geometry to see that the square of the arc length differential in $\Mink$ can be written in Klein coordinates as 
$$ds^2 = -dx^2 + dy^2 + dz^2 = \frac{\left(\xi _2^2-1\right)d\xi_1^2  - (2 \xi_1 \xi_2)d\xi_1 d\xi_2 +\left(\xi_1^2 - 1 \right)d\xi_2^2}{\left(\xi_1^2+\xi_2^2 - 1 \right)^2}.$$ 
This expression is just the negative of the square of the arc length differential of the Lobachevskian metric (i.e. $ds^2 = -ds_{Lob}^2$). 

\subsection{The Transverse $\Hyp$-Ellipse} Consider the hyperboloid of one sheet with $A = \diag(3,-3,6)$. The transverse $\Hyp$-ellipse can be parametrized as $$\gamma_T(t) = \left(t,\pm \frac{\sqrt{1-t^2}}{\sqrt{3}}, \sqrt{\frac{2}{3}}\sqrt{1+2t^2}\right), \;\; -1 \leq t \leq 1.$$ This is projected into the Klein coordinates as
\begin{equation}
\label{A2Klein}
\frac{\xi_2^2}{2} - \xi_1^2=1,
\end{equation}
though it should be noted that each of the two curves in $\gamma_T(t)$ project to two disjoint halves of the branches of the hyperbola (\ref{A2Klein}). The equation of the projected caustics  (\ref{ProjectedCausticEqn}) again preserves the type of the caustic curve $C_\nu$.

\begin{figure}[htbp]
\begin{tabular}{c c}
a) \includegraphics[width=0.45\textwidth]{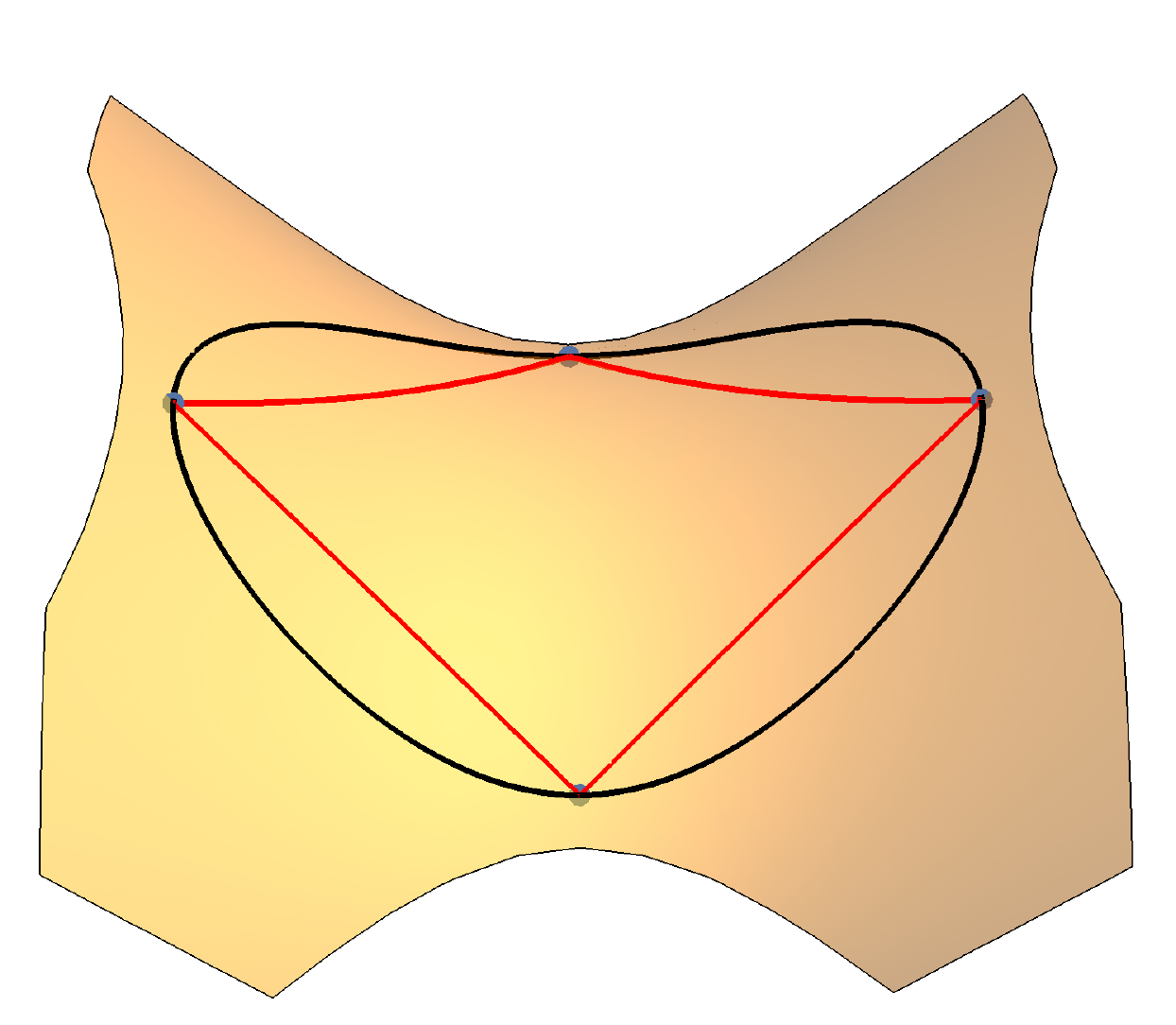} & b) \includegraphics[width=0.40\textwidth]{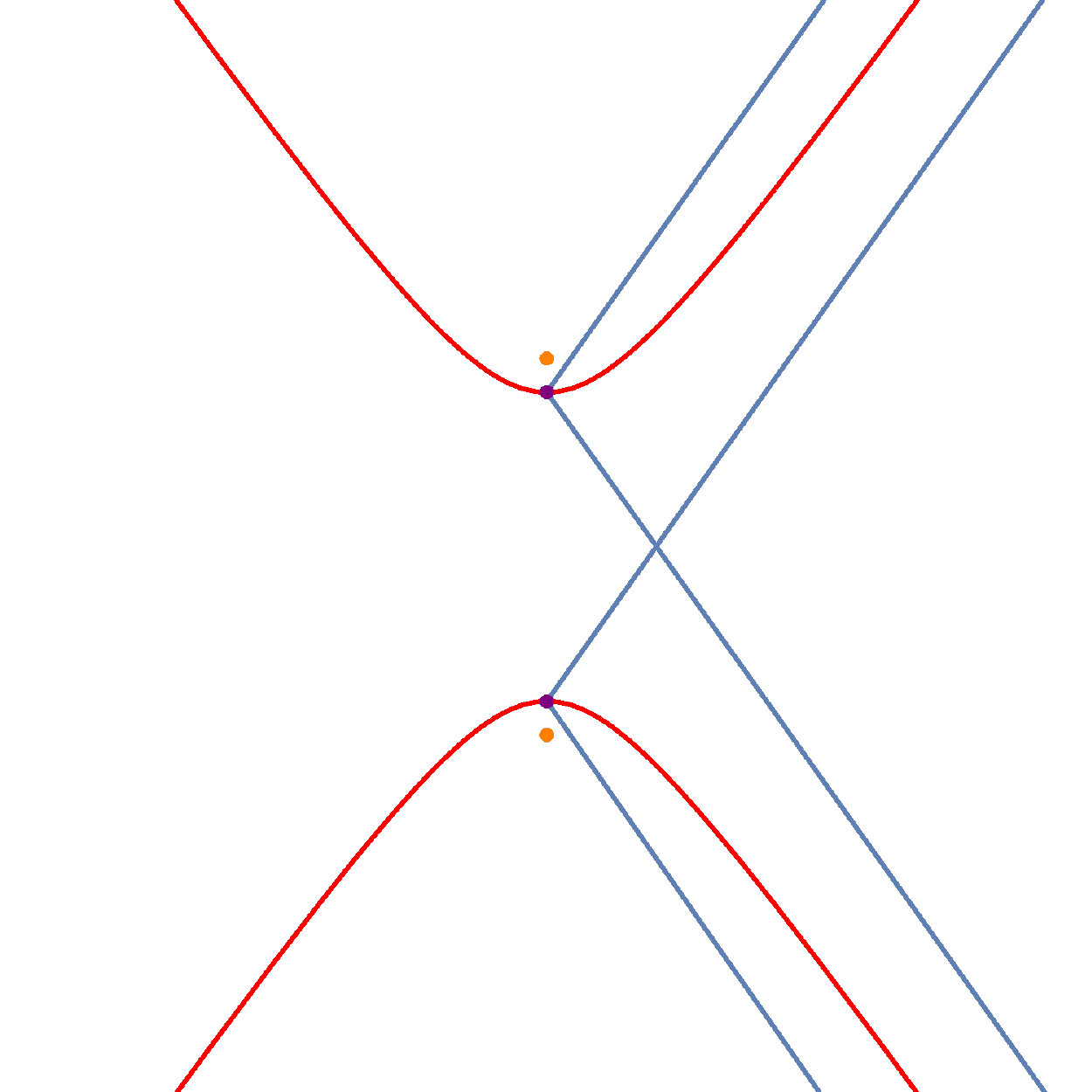} \\
\end{tabular}
\caption{a) A time-like period 4 trajectory in the transverse $\Hyp$-ellipse. b) the projection of the period 4 orbit into Klein coordinates to the point at infinity (shown as two separate locations to illustrate the periodicity of the orbit). } 
\label{HyperboloidExample1}
\end{figure}

Billiards in this Klein model look far different than the previous example. The billiard trajectory may  reflect from one branch of the hyperbola to the other or it can travel only along a single branch. A further disadvantage of this model is that points whose $x_0$ coordinate have opposite signs and are near the $x_0=0$ plane are projected to points that are far away from one another in the $\xi_1\xi_2$-plane  (i.e. $|\xi_1|, |\xi_2|\to\infty$) which make periodic orbits hard to visually identify.  For example, there is a 4-periodic orbit in the transverse $\Hyp$-ellipse which consists of the following points $$\left(-1,0,\sqrt{2}\right),\;\; \left(0,\frac{1}{\sqrt{3}},\sqrt{\frac{2}{3}}\right),\;\; \left(1,0,\sqrt{2}\right),\;\; \left(0,-\frac{1}{\sqrt{3}},\sqrt{\frac{2}{3}}\right).$$ 
However the second and fourth points are projected to the point at infinity in the Klein model, see figure \ref{HyperboloidExample1}. 

\subsection{Periodic Trajectories from the Cayley Condition}

Let the matrix $A = \diag(a,b,c)$. The conditions (\ref{EvenCayley}) and (\ref{OddCayley}) can be used to classify periodic trajectories in terms of the parameters $a, b, c$ and the caustic parameter $\nu$.

\begin{example}[3-periodic trajectories]
The condition (\ref{OddCayley}) for a period 3 trajectory is for $D_2=0$. This is equivalent to 
$$3(abc)^2\ -2 abc\left(ab + bc + ac \right)\nu + \left( 4 a b c (a+b+c)-(a b+a c+b c)^2\right) \nu^2 = 0$$
which has solutions 
\begin{align*}
\nu_1,\nu_2 &= \frac{abc(a+b+c) \pm 2abc \sqrt{a^2 b^2 + a^2 c^2 + b^2 c^2 - a b c (a + b + c)}}{4 a b c (a + b + c) - (a b + b c + a c)^2}. 
\end{align*}
This condition works for the transverse $\Hyp$-ellipse but if the two component curves of the collared $\Hyp$-ellipse are sufficiently far apart there cannot be a period 3 trajectory outside the collared $\Hyp$-ellipse due to the corresponding points not being geodesically connectable. In such a case, the condition above will produce period 6 trajectories inside the collared $\Hyp$-ellipse by using the $\mathcal{AA}$ map. 

\begin{figure}[htbp]
\begin{tabular}{c c}
a) \includegraphics[width=0.45\textwidth]{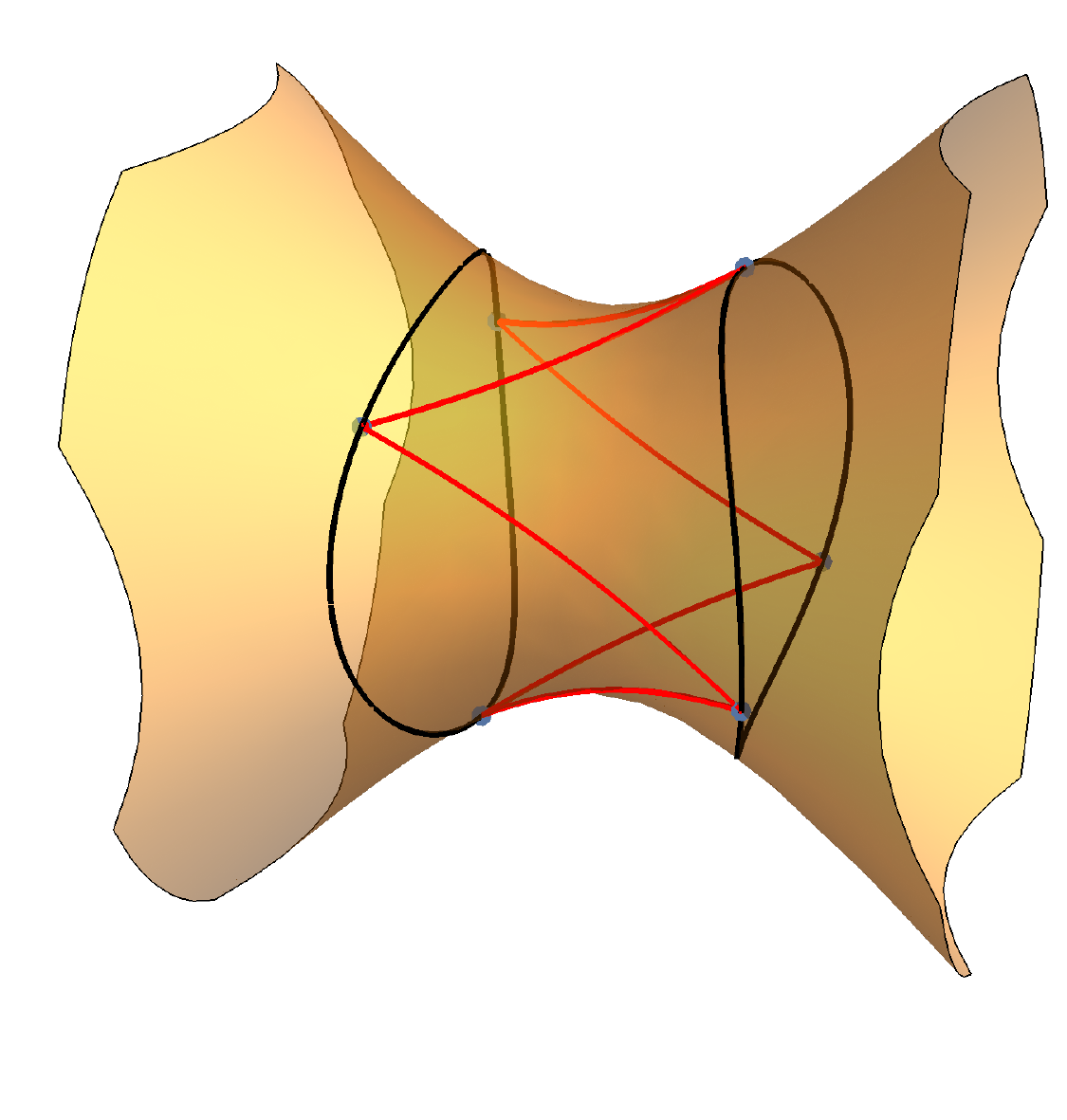} & b) \includegraphics[width=0.45\textwidth]{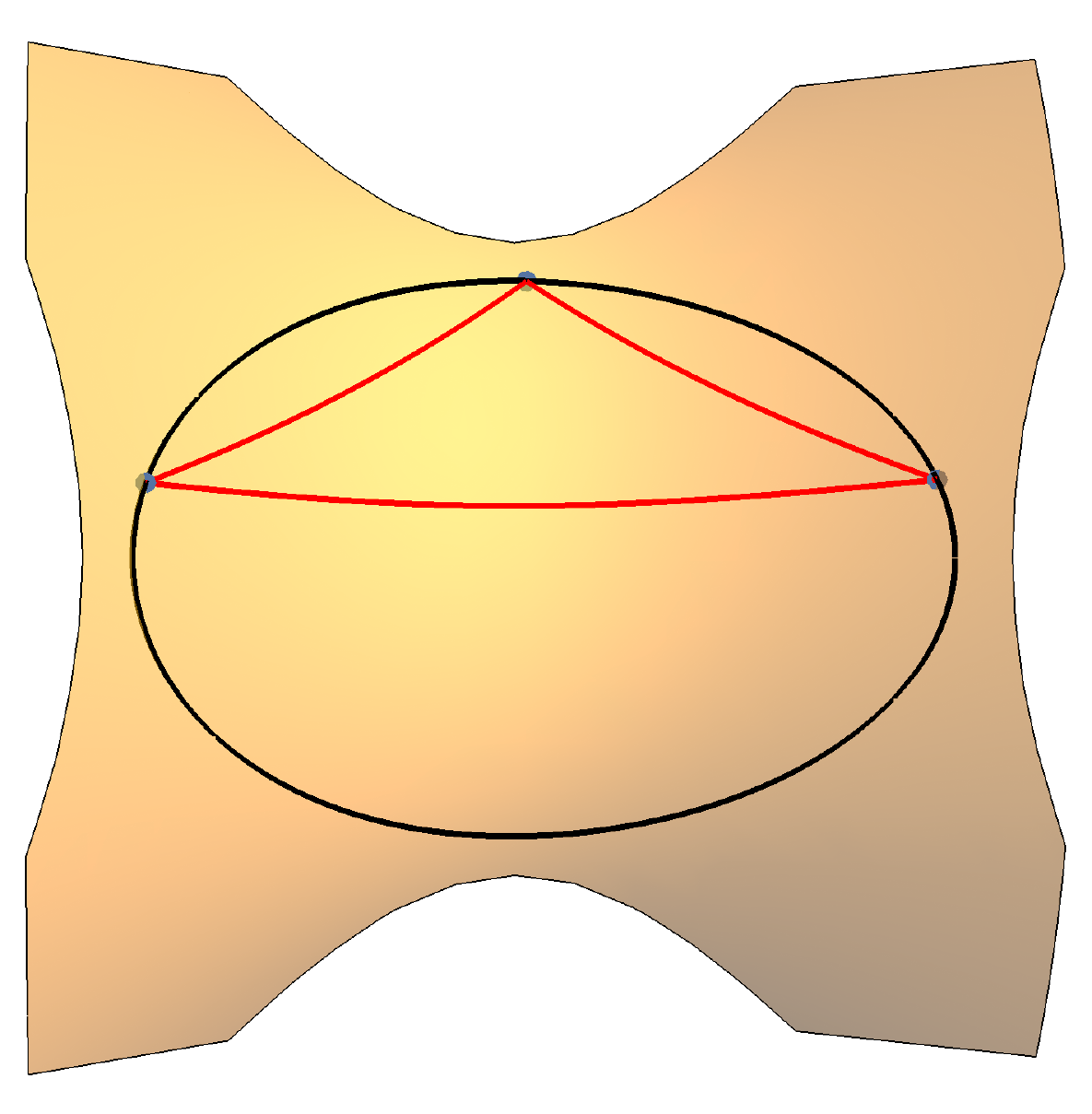}
\end{tabular}
\caption{Time-like period 6 and 3 trajectories in the collared and transverse $\Hyp$-ellipse, respectively, using the condition $D_2=0$.}
\end{figure}
\end{example}

\begin{example}[4-Periodic trajectories]\label{4PeriodicExample}
The condition (\ref{EvenCayley}) for 4-period trajectories is $B_3=0$, which is equivalent to 
$$(\nu  (-a b+a c+b c)-a b c) (\nu  (a b+a c-b c)-a b c) (\nu  (a b-a c+b c)-a b c) = 0.$$
The numerator is cubic in $\nu$ and has roots
$$\nu_1 =\frac{a b c}{-a b+b c+a c}, \;\; \nu_2 = \frac{a b c}{a b-b c+a c},\;\;\; \nu_3 = \frac{a b c}{a b+b c-a c}.$$
Using definition \ref{ElHyp}, we can make state specifically when these roots are defined. In the case of the collared $\Hyp$-ellipse, the denominators of $\nu_1,\nu_3$ will never vanish. The denominator of $\nu_2$ will vanish if $(a,b,c) = (a, b, ab/(b-a))$ for $a<b<2a$. 
In the case of the transverse $\Hyp$-ellipse, the denominator of $\nu_1$ will vanish if $(a,b,c) = (a,ac/(a-c),c)$. The denominators of $\nu_2,\nu_3$ will never vanish. 

\begin{figure}[h]
\begin{tabular}{c c} 
a) \includegraphics[width=0.45\textwidth]{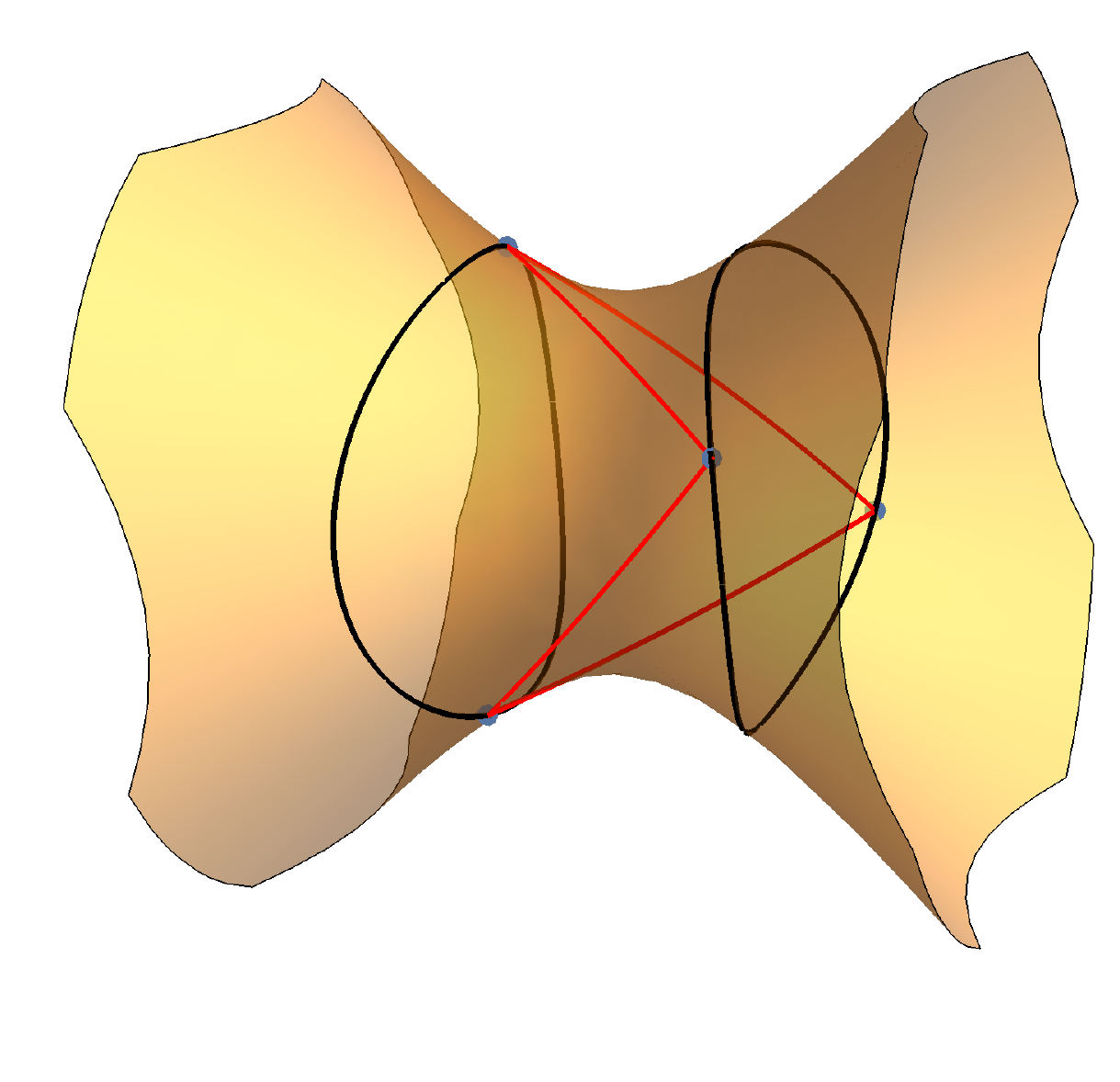} & b) \includegraphics[width=0.45\textwidth]{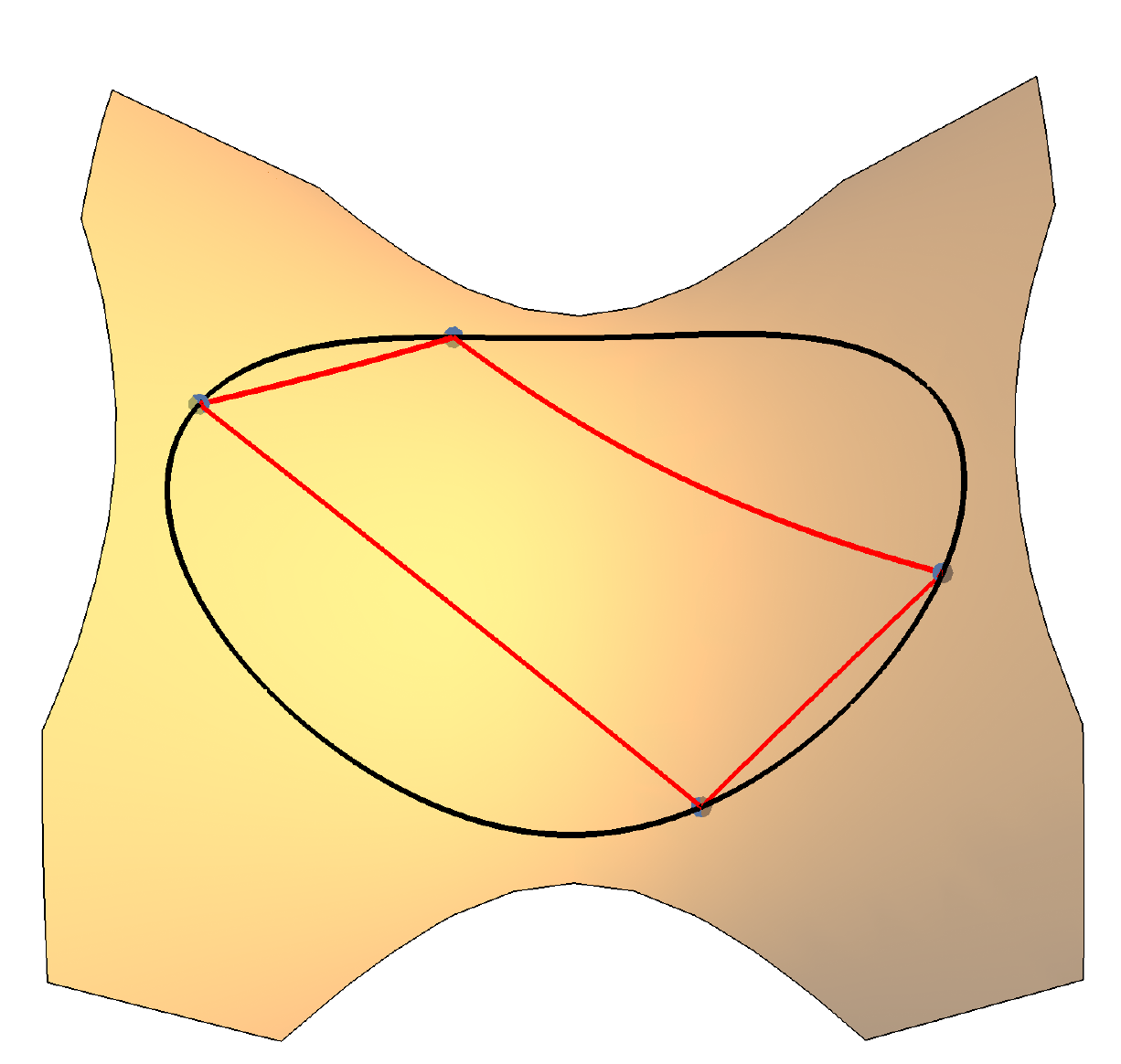}
\end{tabular}
\caption{Period 4 trajectories that are (a) light-like in the collared $\Hyp$-ellipse, and (b) time-like in the transverse $\Hyp$-ellipse, using the condition $E_3=0$ and $B_3=0$, respectively.}
\end{figure}

In both of the above cases, vanishing denominators correspond to 4-periodic light-like trajectories which are tangent to the caustic at infinity, $\nu = \infty$. This is consistent with the condition $E_3=0$ from (\ref{LightEvenCayley}).  
\end{example}

\begin{example}[5-periodic trajectories]
The condition $D_2 D_4 - D_3^2=0$ is equivalent to finding the roots of a degree 6 polynomial in $\nu$. Its simplest expression is given in terms of the elementary symmetric polynomials in 3 variables, $p := a b c, q := ab + ac + bc, r := a + b + c$:  
\begin{align*}
0 &= 5 r^6  -10 q r^5\nu + r^4 \left(52 p r-9 q^2\right)\nu^2 + 4 r^3 \left(-36 p q r+9 q^3+56 r^2\right) \nu^3 \\
 & \qquad + r^2 \left(-16 r^2 \left(p^2+14 q\right)+120 p q^2 r-29 q^4\right)\nu^4  \\
 & \qquad + 2 r \left(16 q r^2 \left(q-p^2\right)-8 p q^3 r+64 p r^3+3 q^5\right)\nu^5 \\
 & \qquad  + (48 p^2 q^2 r^2-64 r^3 \left(p^3+4 r\right)-12 p q^4 r+128 p q r^3-32 q^3 r^2+q^6)\nu^6
\end{align*}
In the collared $\Hyp$-ellipse with $a=3, b=6, c=9$, this produces four real roots and two imaginary roots: $$\nu \approx -4.39698, \; 2.06224, \; 2.99982, \; 9.39196.$$
In the transverse $\Hyp$-ellipse with $a=3, b=-3, c=6$, this produces four real roots and two imaginary roots: $$\nu \approx -2.99945, \; -1.26894, \; 0.741316, \; 2.87981.$$

\begin{figure}[h]
\begin{tabular}{c c}
a) \includegraphics[width=0.45\textwidth]{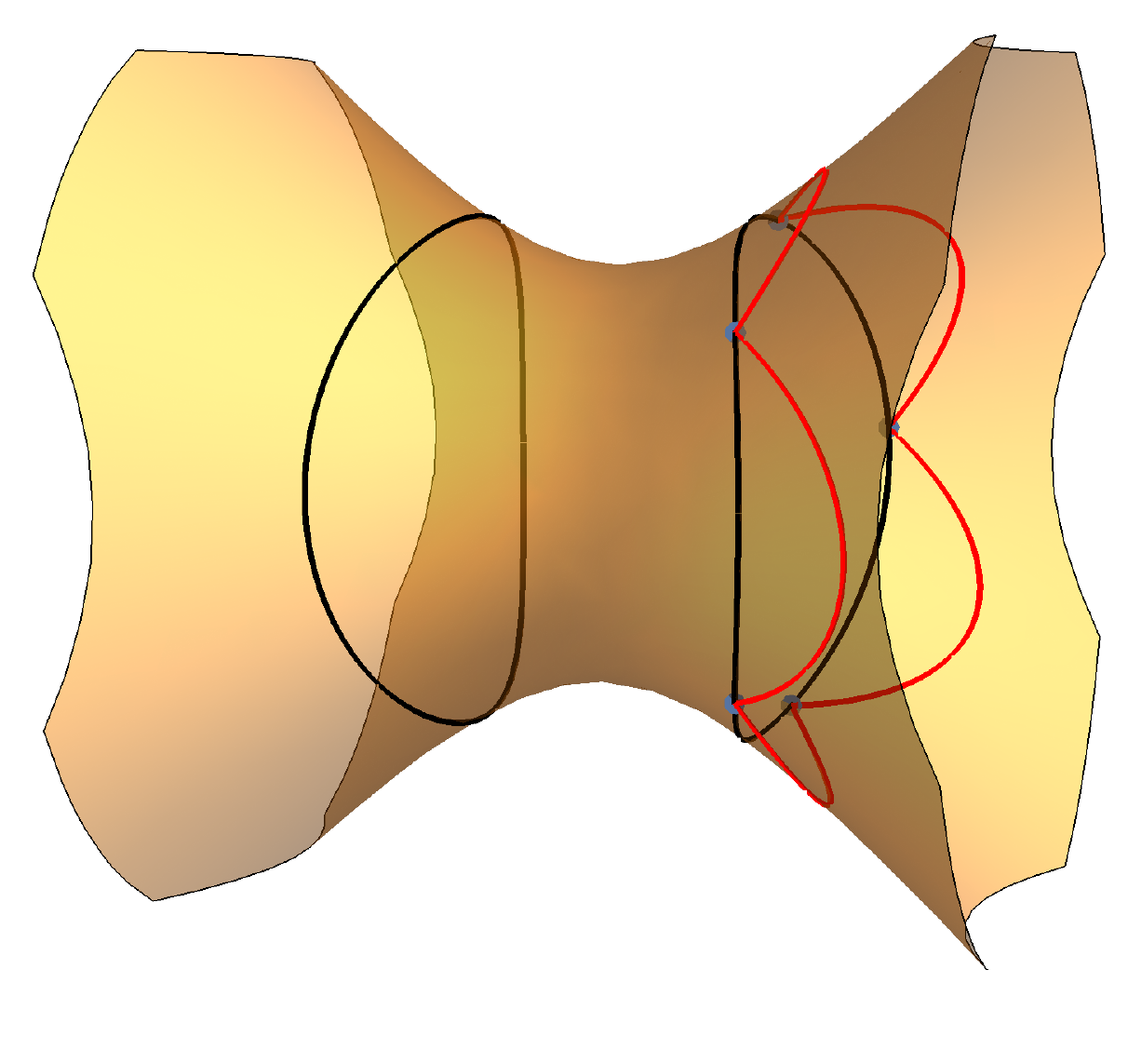} & b) \includegraphics[width=0.45\textwidth]{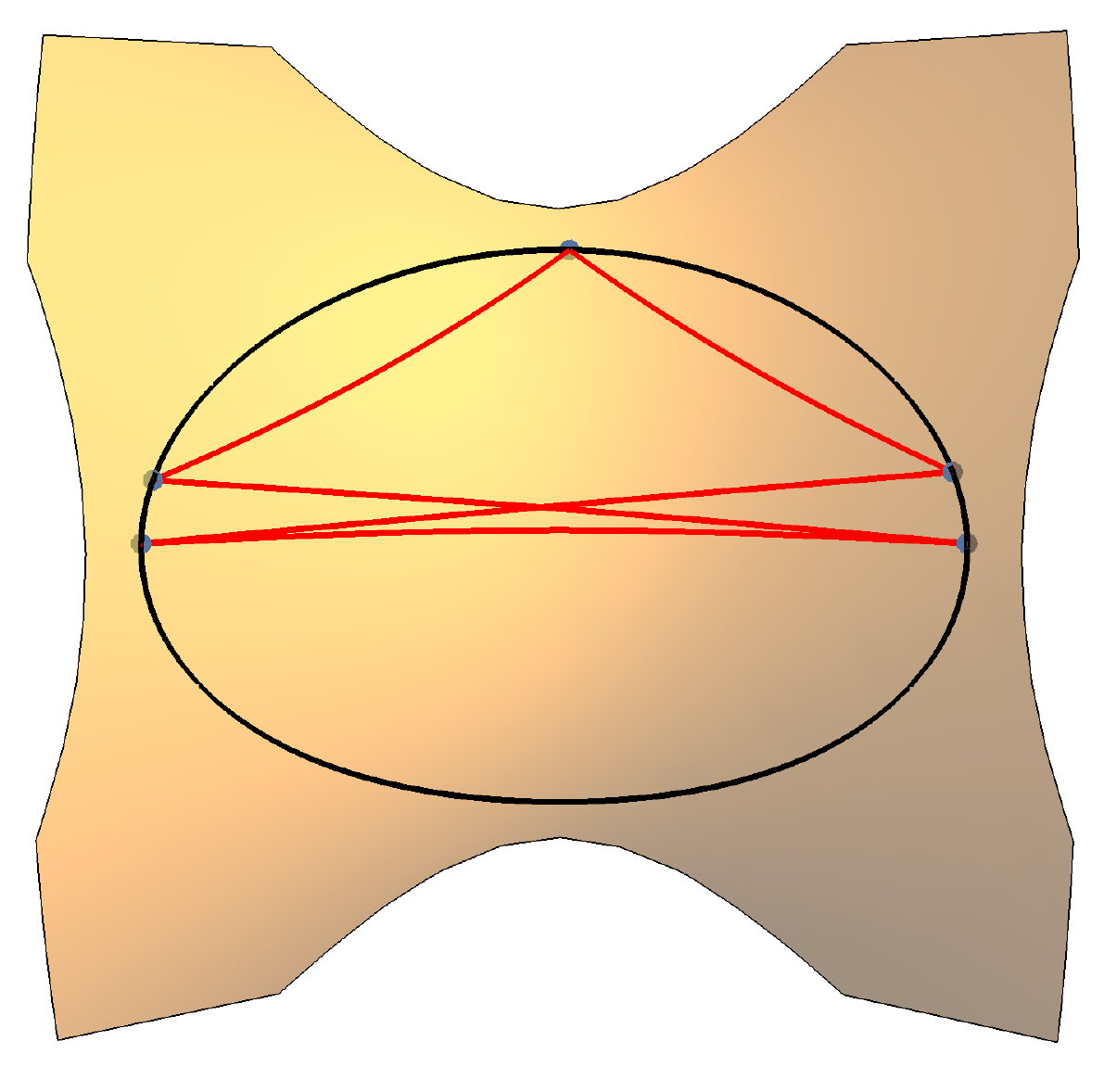}
\end{tabular}
\caption{Period 5 trajectories that are (a) space-like and outside the collared $\Hyp$-ellipse, and time-like and inside the transverse $\Hyp$-ellipse, using the condition $D_2D_4 - D_3^2=0$.}
\end{figure}
\end{example}

\begin{example}[6-periodic trajectories]
The condition for a time- or space-like period 6 orbit from (\ref{EvenCayley}) is that $B_3B_5-B_4^2=0$. This is equivalent to 
\begin{align*}
0&=\left[\left(-3 a^2 b^2+c^2 (a-b)^2+2 a b c (a+b)\right)\nu ^2 +2 a b c (a b-ac-bc) \nu+ (abc)^2\right] \\
&\times \left[(-a^2 (b-c)^2+2 a b c (b+c)-b^2 c^2)\nu^2 - 2abc(ab + ac + bc)\nu + 3(abc)^2\right] \\
&\times \left[(a^2 (b-c)^2+2 a b c (b+c)-3 b^2 c^2)\nu^2 + 2abc (-ab -ac +bc )\nu + (abc)^2\right] \\
&\times \left[(a^2 (b-c) (b+3 c)+2 a b c (c-b)+b^2 c^2)\nu^2 + 2abc(-ab + ac - bc)\nu + (abc)^2 \right]
\end{align*}

The first quadratic has discriminant $16a^3 b^3 c^2 (c-a) (c-b)$ which is positive for the collared $\Hyp$-ellipse and negative for the transverse $\Hyp$-ellipse. The roots are given by 
\begin{align*}
\nu_{1,2} &= \frac{a b c}{-ab + ac + bc \pm 2 \sqrt{a b (c-a) (c-b)}}.
\end{align*}
The second quadratic in the product above is equivalent to $D_2=0$, so it produces period 3 trajectories.
The third quadratic has discriminant $16 a^2 b^3 c^3 (b-a) (c-a)$ which is positive for both the collared and transverse $\Hyp$-ellipse. The roots are given by
\begin{align*}
\nu_{1,2} &= \frac{a b c}{ab+ac-b c \pm 2 \sqrt{b c (a-b) (a-c)}}.
\end{align*}
The fourth quadratic has discriminant $16 a^3 b^2 c^3 (a-b) (c-b)$ which is negative for the collared $\Hyp$-ellipse and positive for the transverse $\Hyp$-ellipse. The roots are given by
\begin{align*}
\nu_{1,2} &= \frac{a b c}{ab-ac+b c \pm2 \sqrt{a c (a-b) (c-b)}}.
\end{align*}
In the collared $\Hyp$-ellipse with $a=3, b=6, c=9$, the six real roots are $$\nu \in \left\{\;\frac{18}{11}, 6, \frac{18\pm 72 \sqrt{3}}{47}, \frac{198 \pm 36 \sqrt{13}}{23}\;\right\}.$$  In the case of the transverse $\Hyp$-ellipse with $a=3, b=-3, c=6$, the six real roots are $$\nu \in \left\{\;-\frac{6}{7}, 6, \frac{-30\pm 24 \sqrt{3}}{23}, \frac{-6 \pm 12 \sqrt{13}}{17}\;\right\}.$$ 

\begin{figure}[h]
\centering
\begin{tabular}{c c}
a) \includegraphics[width=0.45\textwidth]{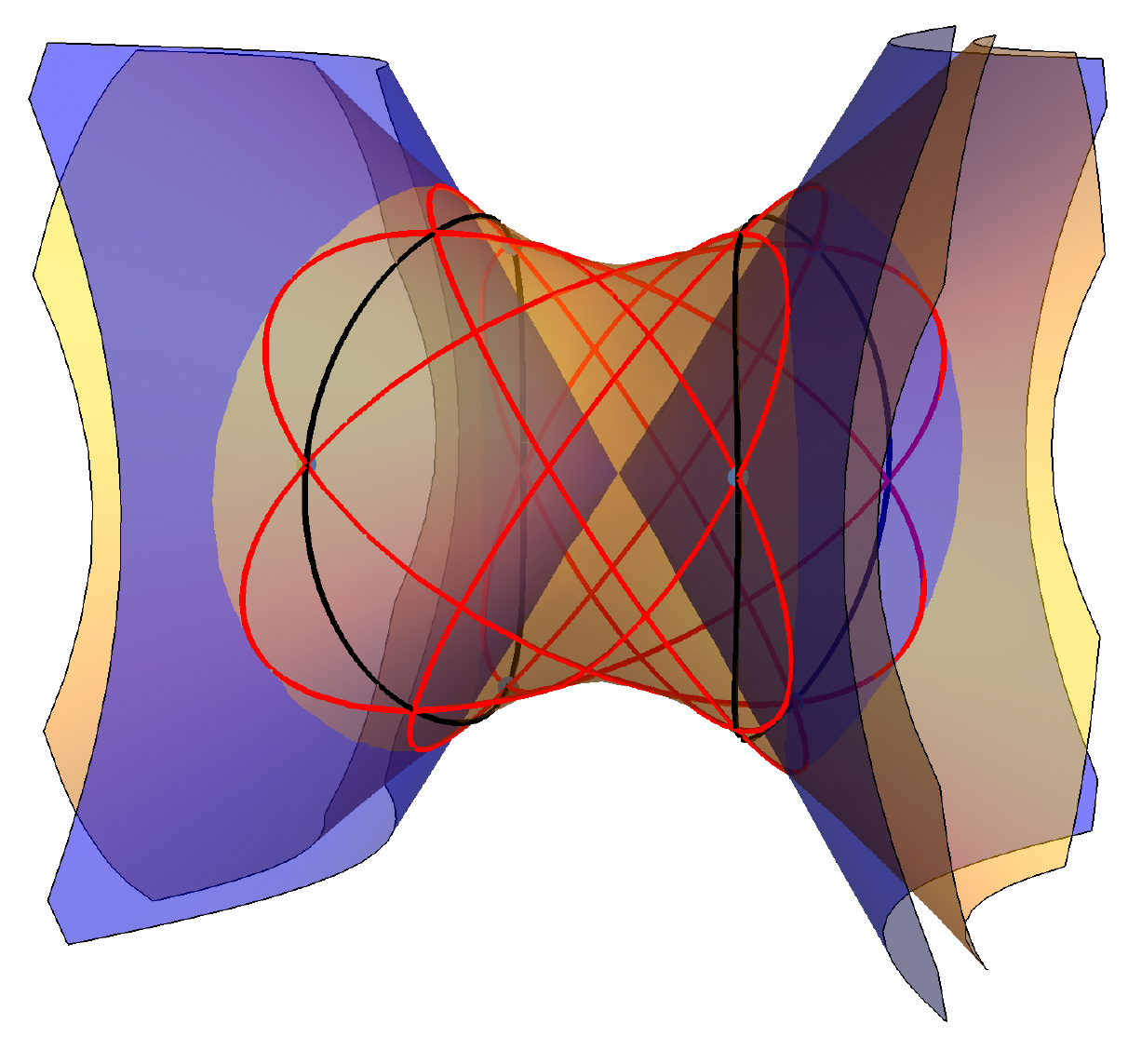} & b) \includegraphics[width=0.40\textwidth]{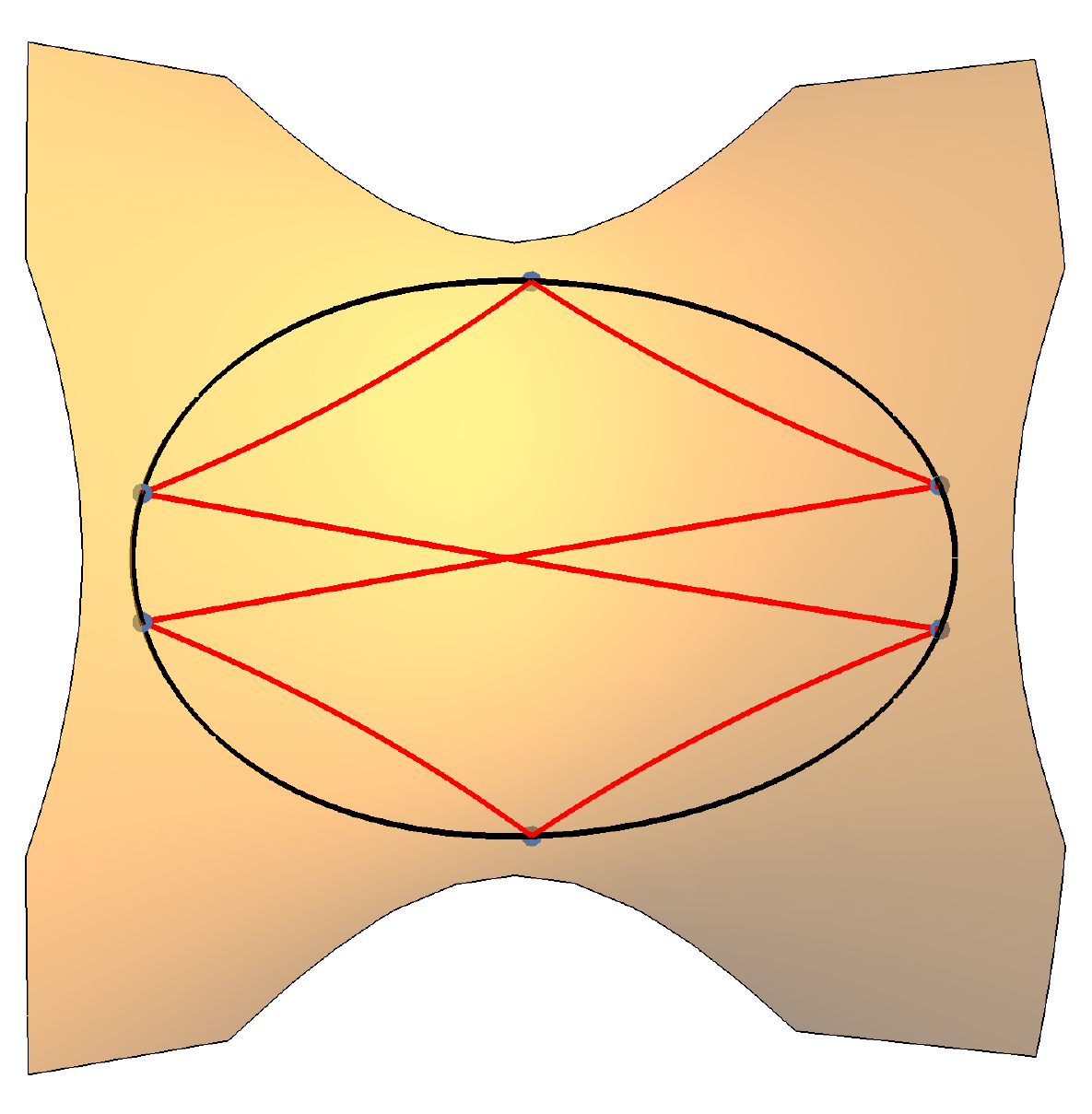} \\
\end{tabular}
\caption{(a) The collared $\Hyp$-ellipse and the space-like period 6 trajectory from figure \ref{EllipseExample} with the caustic cone corresponding to $\nu = (18-72 \sqrt{3})/47$. The geodesics from the billiard were extended to illustrate the tangency to the caustic.  (b) A time-like period 6 trajectory in the transverse $\Hyp$-ellipse showing symmetry across the coordinate planes $x_0=0$ and $x_1=0$.}
\label{PeriodicConfFam}
\end{figure}
Both of the cases include the degenerate conic corresponding to $\nu = 6$, though this conic is contained in different coordinate hyperplanes in each case. 

The condition for a light-like period 6 trajectory from (\ref{LightEvenCayley}) is that $E_3E_5-E_4^2=0$. This is equivalent to 
\begin{align*}
0&=\left(3 a^2 b^2-c^2 (b-a)^2-2 a b c (a+b)\right) \left(a^2 (c-b)^2-2 a b c (b+c)+b^2 c^2\right) \\
&\times \left(a^2 (c-b)^2+2 a b c (b+c)-3 b^2 c^2\right) \left(a^2 (b-c) (b+3 c)+2 a b c (c-b)+b^2 c^2\right)
\end{align*}
In the case of the collared $\Hyp$-ellipse, this has two solutions in terms of $a$, $b$, and $c$. One solution is $$(a,b,c) = \left(a, b, \frac{a b \left(2 \sqrt{b}+\sqrt{b-a}\right)}{(a+3 b)\sqrt{b-a}} \right) \text{ for } a < b < \frac{4a}{3}$$
and the other is 
$$ (a,b,c) = \left(a, b, \frac{a b \left(2 \sqrt{a b}+a+b\right)}{(b-a)^2} \right) \text{ for } a< b < 4a.$$
In the case of the transverse $\Hyp$-ellipse, there are two solutions in terms of $a$, $b$, and $c$. One solution is 
$$(a,b,c) = \left(a, \frac{ac\left(\sqrt{c-a}-2 \sqrt{c} \right)}{(a+3 c)\sqrt{c-a}}, c \right) $$
and the other solution is 
$$(a,b,c) = \left(a, -\frac{a c \left(2 \sqrt{a^2-a c+c^2}+a+c\right)}{(c-a)^2} ,c \right).$$
These conditions are equivalent to the cases when the denominators of $\nu_{1,2}$ above could possibly vanish. 

With the above conditions, a light-like period 6 trajectory will occur when the initial points $x$ and $y$ can be connected by a light-like geodesic that stays inside the collared or transverse $\Hyp$-ellipse. 

\begin{figure}[h]
\begin{tabular}{c c}
a) \includegraphics[width=0.45\textwidth]{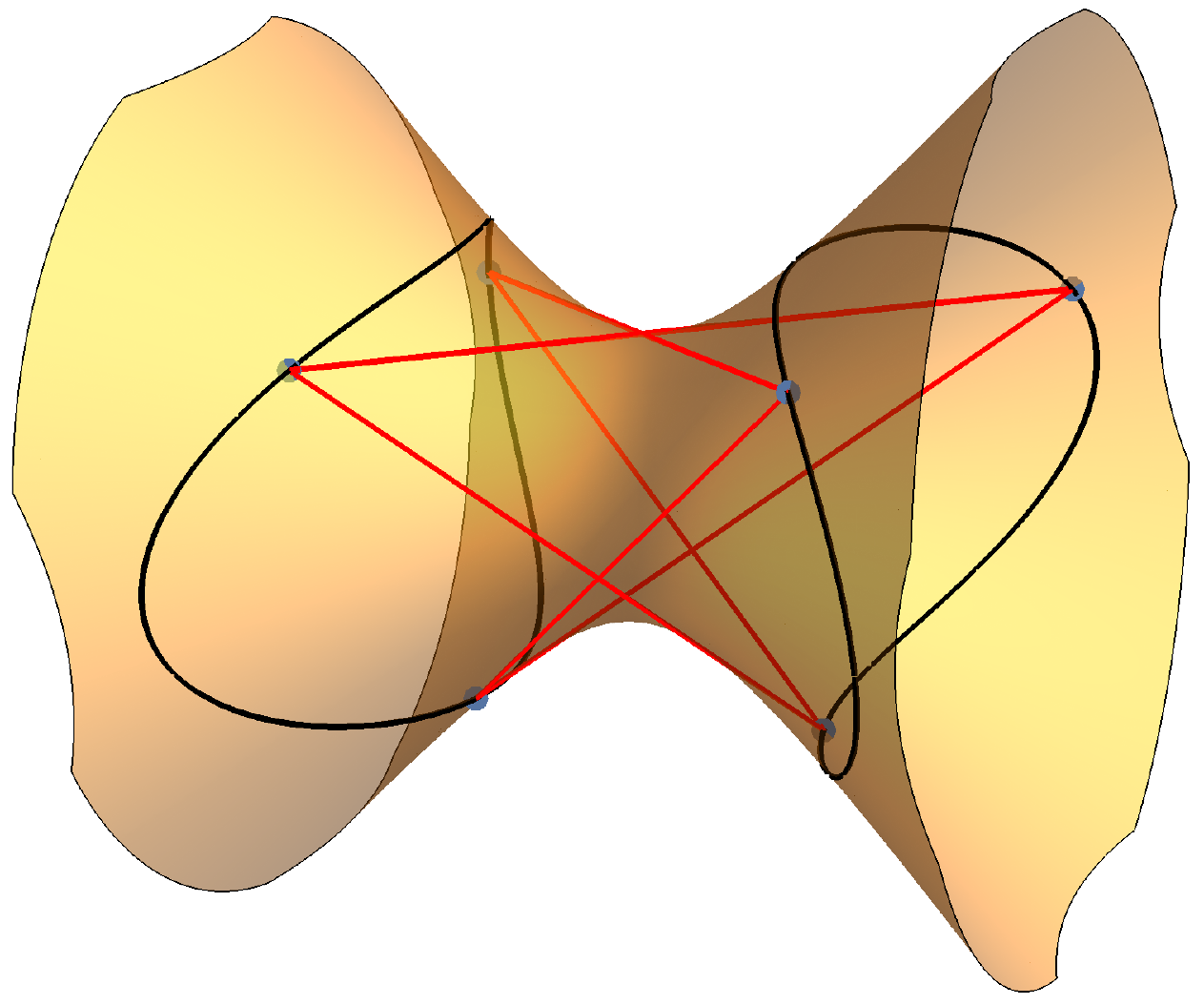} & b) \includegraphics[width=0.45\textwidth]{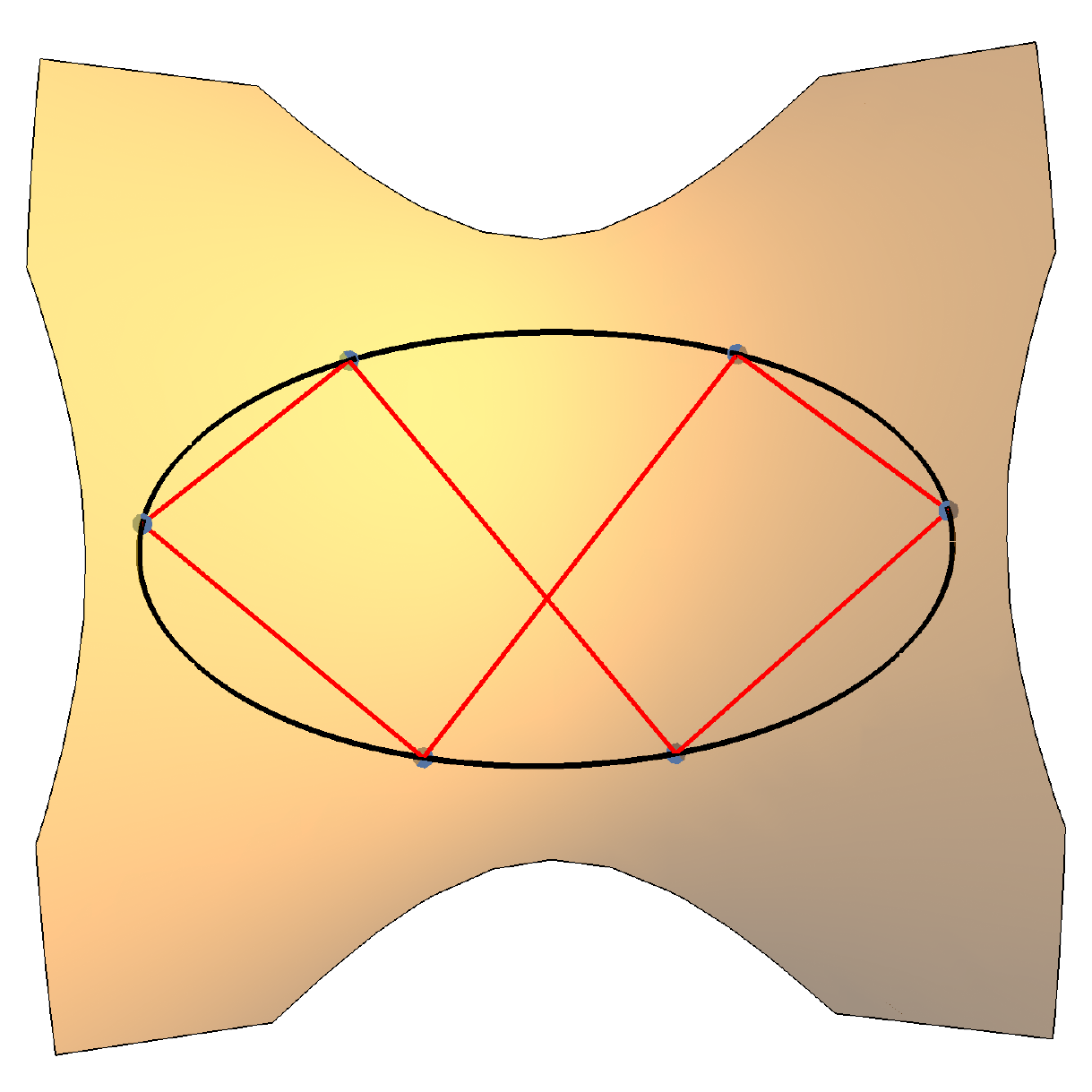}
\end{tabular}
\caption{Light-like period 6 orbits in the collared (a) and transverse (b) $\Hyp$-ellipse. }
\end{figure}
\end{example}

\section*{Acknowledgements}

The research of both authors was supported by the Discovery Project No. DP190101838 \emph{Billiards within confocal quadrics and beyond} from the Australian Research Council.

\bibliographystyle{amsalpha}
\nocite{*}
\bibliography{References1}

\hrule

\end{document}